\numberwithin{equation}{section}
\newtheorem{theorem}{Theorem}[section]
\newtheorem{lemma}[theorem]{Lemma}
\theoremstyle{remark}
\newcommand{\ep}{\varepsilon}
\newcommand{\vp}{\varphi}
\newcommand{\R}{\mathbb R^N_+}
\newcommand{\BR}{\partial \mathbb R_+^N}
\newcommand{\lp}{-\Delta}
\newcommand{\into}{\int_{\Omega}}
\newcommand{\intpo}{\int_{\partial \Omega}}
\newcommand{\intr}{\int_{\mathbb R_+^{N}}}
\newcommand{\intrr}{\int_{\mathbb R^{N-1}}}
\begin{document}
\title{Existence results for superlinear elliptic equations with nonlinear boundary value
conditions}
\author{Xiaohui Yu$^{*}$}
\thanks{*The Center for China's Overseas Interests, Shenzhen University,
Shenzhen Guangdong, 518060, People's Republic of
China(yuxiao\_211@163.com)}
 \maketitle

{\scriptsize  {\bf Abstract:} In this paper, we study the existence
of solutions for the following superlinear elliptic equation with
nonlinear boundary value condition
$$
 \left\{
\begin{array}{ll}
-\Delta u+u=|u|^{r-2}u &\text{in} \; \Omega,\\
\\ \frac{\partial u}{\partial \nu}=|u|^{q-2}u  &\text{on}\;\partial\Omega,
\end{array}
\right.
$$
where $\Omega\subset \mathbb R^N, N\geq 3$ is a bounded domain with
smooth boundary. We will prove the existence results for the above
equation under four different cases: (i) Both $q$ and $r$ are
subcritical; (ii) $r$ is critical and $q$ is subcritical; (iii) $r$
is subcritical and $q$ is critical; (iv) Both $q$ and $r$ are
critical.

{\bf keywords:}\;\;existence result, critical exponent, trace inequality, nonlinear boundary value, (PS) condition.\\

{\bf Mathematics Subject Classification (2010):}\;\; 35J60, 35J57,
35J15.
\vspace{3mm} } \maketitle

\section { Introduction}
In this paper,  we study the existence of solutions for the
following superlinear elliptic equation with nonlinear boundary
value condition
\begin{equation}\label{1.1}
 \left\{
\begin{array}{ll}
-\Delta u+u=|u|^{r-2}u &\text{in} \; \Omega,\\
\\ \frac{\partial u}{\partial \nu}=|u|^{q-2}u  &\text{on}\;\partial\Omega,
\end{array}
\right.
\end{equation}
where $\Omega\subset \mathbb R^N, N\geq 3$ is a bounded domain with
smooth boundary. We also assume that $r,q>2$ so that this problem is
a superlinear one.

Nonlinear boundary value problems were widely studied in the past
few decades and there are many results on this aspect. For example,
Y.Li and M.Zhu \cite{LZu} classified all the positive solutions for
equation
\begin{equation}\label{1.2}
 \left\{
\begin{array}{ll}
-\Delta u=|u|^{2^*-2}u &\text{in} \; \R,\\
\\ \frac{\partial u}{\partial \nu}=|u|^{2_*-2}u  &\text{on}\;\partial\R,
\end{array}
\right.
\end{equation}
where $2^*=\frac{2N}{N-2}$ is the usual critical Sobolev exponent
and $2_*=\frac{2(N-1)}{N-2}$ is the critical exponent for Sobolev
trace inequality. They proved the positive solutions of problem
\eqref{1.2} must have the following form
\begin{equation}\label{1.3}
u_\ep(x)=\frac{[N(N-2)]^{\frac{N-2}{4}}\ep^{\frac{N-2}2}}{[\ep^2+|x'|^2+(x_N+\ep
x_N^0)^2]^{\frac{N-2}2}},
\end{equation}
where $x'=(x_1,x_2,\cdots,x_{N-1})$, $x_N^0=(\frac{N}{N-2})^{\frac
12}$ is a positive constant depending only on $N$. The main method
in this paper is the moving sphere method. Similar results can be
found in \cite{CSF}. Later, by using the moving sphere method and
Harnack inequality, Y.Li and L.Zhang \cite{LZ} studied the Liouville
type theorems for elliptic equations with nonlinear boundary value
conditions. X.Yu \cite{Yu6} studied the nonexistence results for
nonlinear boundary value problems with general nonlinearities. B.Hu
\cite{Hu} studied the nonexistence results of harmonic function with
nonlinear boundary condition. Other results can be found in
\cite{HY1}\cite{HY2}\cite{HY3}.

On the other hand, existence results for nonlinear boundary value
problems were also widely studied. For example, M.Chipota,
M.Chlebik, M.Fila and I.Shafrir \cite{CCFS} studied the existence
results for nonlinear boundary value problem in $\mathbb R_+^N$.
J.Bonder and J.Rossi \cite{BR} studied the existence results for
nonlinear boundary value problems involving $p-$Laplacian operator.
In order to overcome the difficulty of nonlocal property of the
fractional Laplacian operators, X.Cabre and J.Tan \cite{CT}
transformed the fractional Laplacian equations into nonlinear
boundary value problems by the extension theorem in \cite{CS}. Then
they studied the existence results, regularity results, nonexistence
results for fractional Laplacian equations. Later, J.Tan \cite{T}
studied the existence result for critical fractional Laplacian
equations. He still turned the problem into a nonlinear boundary
value problem. Other results on fractional Laplacian equations can
be found in \cite{BCP}\cite{BCPa}\cite{HYu} and etc.

In this paper, we study the existence results for nonlinear boundary
value problem \eqref{1.1}. This equation involves two nonlinear
terms. We first consider the case where both $r$ and $q$ are
subcritical. We have the following multiple solutions result.
\begin{theorem}\label{t 1.1}
Suppose that the nonlinear terms are superlinear and subcritical,
i.e., $2<r<\frac{2N}{N-2}$ and $2<q<\frac{2(N-1)}{N-2}$, then
problem \eqref{1.1} has infinitely many solutions.
\end{theorem}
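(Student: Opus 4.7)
The plan is to apply a symmetric mountain pass (Fountain) argument to the energy functional
\[
I(u)=\frac12\into(|\nabla u|^2+u^2)\,dx-\frac1r\into|u|^r\,dx-\frac1q\intpo|u|^q\,dS,
\]
defined on the Hilbert space $E:=H^1(\Omega)$ with its standard norm $\|u\|^2=\into(|\nabla u|^2+u^2)\,dx$. First I would check that, under the subcritical assumptions $2<r<2N/(N-2)$ and $2<q<2(N-1)/(N-2)$, the Sobolev embedding $E\hookrightarrow L^r(\Omega)$ and the trace embedding $E\hookrightarrow L^q(\partial\Omega)$ are both \emph{compact}; this makes $I\in C^1(E,\mathbb R)$, identifies critical points with weak solutions of \eqref{1.1}, and in particular implies that the two ``nonlinear'' terms of $I$ and $I'$ are weakly continuous.

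Next I would verify the Palais--Smale condition. Let $\{u_n\}\subset E$ satisfy $I(u_n)\to c$, $I'(u_n)\to 0$. Boundedness follows from the standard Ambrosetti--Rabinowitz trick: choosing $\theta=\min(r,q)>2$, the combination
\[
I(u_n)-\frac1\theta\langle I'(u_n),u_n\rangle=\Big(\tfrac12-\tfrac1\theta\Big)\|u_n\|^2
+\Big(\tfrac1\theta-\tfrac1r\Big)\into|u_n|^r+\Big(\tfrac1\theta-\tfrac1q\Big)\intpo|u_n|^q
\]
forces $\{u_n\}$ to be bounded in $E$. Passing to a subsequence $u_n\rightharpoonup u$, the compact embeddings yield $u_n\to u$ in $L^r(\Omega)$ and in $L^q(\partial\Omega)$; standard manipulations with $\langle I'(u_n)-I'(u), u_n-u\rangle\to 0$ then upgrade weak to strong convergence in $E$. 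This is the only step that really uses the strict subcriticality of both exponents.

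Since $I$ is even, I would apply the Fountain Theorem. Fix an orthonormal basis $\{e_j\}$ of $E$ and set
$Y_k=\mathrm{span}\{e_1,\dots,e_k\}$, $Z_k=\overline{\mathrm{span}\{e_j:j>k\}}$.
Define
\[
\beta_k=\sup_{u\in Z_k,\ \|u\|=1}\|u\|_{L^r(\Omega)},\qquad
\gamma_k=\sup_{u\in Z_k,\ \|u\|=1}\|u\|_{L^q(\partial\Omega)}.
\]
Compactness of the two embeddings gives $\beta_k,\gamma_k\to 0$. Choosing a suitable radius $\rho_k\to\infty$ (for instance a power of $\max(\beta_k,\gamma_k)^{-1}$ optimized against the quadratic term), the estimate
\[
I(u)\geq \tfrac12\rho_k^2-\tfrac1r\beta_k^r\rho_k^r-\tfrac1q\gamma_k^q\rho_k^q
\]
on $\{u\in Z_k:\|u\|=\rho_k\}$ yields $\inf I|_{Z_k\cap S_{\rho_k}}\to\infty$. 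On the finite-dimensional space $Y_k$ all norms are equivalent, so using $r,q>2$ and picking $R_k\gg \rho_k$ large enough, $\max I|_{Y_k\cap B_{R_k}}\leq 0$. The Fountain Theorem then produces an unbounded sequence of critical values, hence infinitely many solutions.

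The main obstacle is the PS verification; although the ingredients are classical, one must handle two different nonlinear terms (one in $\Omega$, one on $\partial\Omega$) simultaneously, and the strong convergence argument must use both the compact interior embedding and the compact trace embedding in tandem. Everything else (the two Fountain geometric conditions and the evenness of $I$) is routine once the functional framework is in place.
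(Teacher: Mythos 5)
Your proof is correct, and it reaches the same multiplicity conclusion by a route that is cousin to the paper's but not identical. The paper applies Rabinowitz's symmetric mountain pass theorem (Theorem 9.12 of \cite{Ra}), taking $V=\emptyset$ and $X=E=H^1(\Omega)$: there one only needs a \emph{single} radius $\rho$ with $I|_{\partial B_\rho}\geq\alpha>0$, together with $I\leq 0$ outside a large ball in every finite-dimensional subspace. You instead invoke Bartsch's Fountain Theorem, which requires the nested decomposition $E=Y_k\oplus Z_k$, the vanishing of the tail norms $\beta_k=\sup_{Z_k\cap S_1}\|\cdot\|_{L^r}$ and $\gamma_k=\sup_{Z_k\cap S_1}\|\cdot\|_{L^q(\partial\Omega)}$, and a sequence of radii $\rho_k\to\infty$ with $\inf I|_{Z_k\cap S_{\rho_k}}\to\infty$. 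Your version therefore carries a bit of extra overhead (one must prove $\beta_k,\gamma_k\to 0$ from compactness and then tune $\rho_k$), whereas the Rabinowitz route needs only a local condition near the origin; in exchange, the Fountain geometry hands you the divergence of the critical levels more directly. Your PS boundedness argument with $\theta=\min(r,q)$ is a clean unification of the paper's two-case split ($r\geq q$ versus $r<q$) and the subsequent strong-convergence step via $\langle I'(u_n)-I'(u),u_n-u\rangle\to 0$ is equivalent to the paper's norm-convergence argument. Everything checks out.
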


However, if $r$ or(and) $q$ is critical, things become more
difficult. The main difficulty of solving the problem by critical
point theory lies in that the Sobolev embedding or(and) the Sobolev
trace embedding is not compact, then the so called Palais-Smale
condition is generally not satisfied by the related functional $I$.
The most significant achievement in this aspect is the work
\cite{BN} in which the authors studied the existence result of the
following problem
\begin{equation}\label{1.4}
 \left\{
\begin{array}{ll}
-\Delta u=|u|^{2^*-2}u+\lambda u &\text{in} \; \Omega,\\
\\ u=0 &\text{on}\;\partial\Omega.
\end{array}
\right.
\end{equation}
They first proved the corresponding functional satisfies the
$(PS)_c$ condition for $c\in (0,\frac 1NS^{\frac N2})$, where $S$ is
the best constant of $D^{1,2}(\mathbb R^N)\hookrightarrow
L^{2^*}(\mathbb R^N)$. Then they proved the Mountain Pass level of
the corresponding functional indeed belongs to this interval under
some assumptions on $\lambda$. Then the existence result was
obtained. After the work \cite{BN}, there were plenty of works on
critical Laplacian equations and we can't list all of them. For
example, X.Wang \cite{W} studied the existence results for critical
Neumann boundary value problem. J.Tan \cite{T} studied the existence
result for critical fractional Laplacian equations in bounded
domain. B.Barrios, E.Colorado, A.de Pablo and U.S\'{a}nchez
\cite{BCP} studied the existence result for a more general critical
fractional Laplacian equation. Hua and Yu \cite{HYu} studied the
existence result for critical fractional Laplacian equations under
other circumstance. Ye and Yu \cite{YY} studied the global
compactness results for critical Laplacian equations in the whole
spaces.

Inspired by the above works, we next consider the case where $q$ is
subcritical and $r$ is the critical exponent. We have the following
existence result.
\begin{theorem}\label{t 1.2}
Suppose that $r=\frac{2N}{N-2}$ and $2<q<\frac{2(N-1)}{N-2}$, then
problem \eqref{1.1} has at least one nontrivial solution.
\end{theorem}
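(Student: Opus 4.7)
The strategy is to find a critical point of
\[
I(u) = \frac{1}{2}\into (|\nabla u|^2+u^2)\,dx - \frac{1}{r}\into|u|^r\,dx - \frac{1}{q}\intpo|u|^q\,d\sigma
\]
on $H^1(\Omega)$ by the Mountain Pass Theorem, following the Brezis--Nirenberg \cite{BN} and Wang \cite{W} framework; since the nonlinear Neumann condition is the natural boundary condition of $I$, critical points of $I$ coincide with weak solutions of \eqref{1.1}.

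First, the mountain pass geometry: $I(0)=0$, and thanks to the trace embedding $H^1(\Omega)\hookrightarrow L^q(\partial\Omega)$ (continuous because $q<\frac{2(N-1)}{N-2}$) together with $H^1(\Omega)\hookrightarrow L^r(\Omega)$ one gets $I(u)\ge\alpha>0$ on a small sphere, while the superlinearity $r,q>2$ gives $I(tu_0)\to-\infty$ for any fixed $u_0>0$. Next I would prove the local Palais--Smale condition: $(PS)_c$ holds for every $c<\frac{1}{2N}S^{N/2}$, where $S$ is the best constant in $D^{1,2}(\mathbb R^N)\hookrightarrow L^{2^*}(\mathbb R^N)$. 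Boundedness of any PS sequence comes from the usual $\theta$--trick with $\theta=\min(r,q)>2$. The subcritical boundary term is compact by the Rellich--Kondrachov trace theorem, so loss of compactness comes only from the critical bulk term; a concentration--compactness decomposition shows such loss happens through bubbles modeled on $U_\ep$, with interior bubbles carrying energy $\frac{1}{N}S^{N/2}$ and boundary bubbles---whose rescalings solve the Neumann limit problem $-\Delta v=|v|^{2^*-2}v$ in $\R$, $\partial_\nu v=0$ on $\BR$, the boundary nonlinearity becoming subcritical under the rescaling precisely because $q<\frac{2(N-1)}{N-2}$---carrying only $\frac{1}{2N}S^{N/2}$. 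Consequently no bubble can form below $\frac{1}{2N}S^{N/2}$ and $(PS)_c$ holds there.

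The main step is to show that the mountain pass level lies strictly below $\frac{1}{2N}S^{N/2}$. Since interior--concentrating test functions would only give levels close to $\frac{1}{N}S^{N/2}$, one must concentrate at the boundary: after locally flattening $\partial\Omega$ near some $x_0\in\partial\Omega$, take $v_\ep(x)=\eta(x)U_\ep(x-x_0)$ with $\eta$ a smooth cut-off and $U_\ep$ the Aubin--Talenti instanton from \eqref{1.3}, and estimate $\max_{t\ge 0}I(tv_\ep)$. Standard expansions give
\[
\into|\nabla v_\ep|^2=\tfrac12 S^{N/2}+O(\ep^{N-2}),\qquad \into v_\ep^{2^*}=\tfrac12 S^{N/2}+O(\ep^N),
\]
the mass $\into v_\ep^2$ is of order $\ep^2$ for $N\ge 5$, $\ep^2|\log\ep|$ for $N=4$, $\ep$ for $N=3$, and
\[
\intpo v_\ep^q = C_0\,\ep^{(N-1)-\frac{q(N-2)}{2}}+\text{l.o.t.},
\]
with exponent strictly positive (by $q<\frac{2(N-1)}{N-2}$) yet strictly smaller than the exponent of $\into v_\ep^2$ (by $q>2$). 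A direct maximization then yields
\[
\max_{t\ge 0} I(tv_\ep) \le \frac{1}{2N}S^{N/2}+\tfrac12 \into v_\ep^2 - \tfrac1q\intpo v_\ep^q + \text{l.o.t.},
\]
so the negative boundary contribution dominates the positive mass one and the supremum lies strictly below $\frac{1}{2N}S^{N/2}$ for $\ep$ small. The Mountain Pass Theorem then produces a nontrivial critical point of $I$, hence a nontrivial weak solution of \eqref{1.1}. The hardest step is this asymptotic comparison---in particular tracking $\intpo v_\ep^q$ through the boundary--straightening diffeomorphism and verifying that its order beats the mass order uniformly in $N\ge 3$---which is exactly where the full hypothesis $2<q<\frac{2(N-1)}{N-2}$ is used.
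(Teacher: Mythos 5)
Your overall strategy coincides with the paper's: mountain pass geometry, a local $(PS)_c$ condition with threshold $\frac{1}{2N}S^{N/2}$, and an energy estimate on a boundary--concentrating bubble to push the minimax level strictly below that threshold. The one genuine departure is in how you establish the threshold: you sketch a concentration--compactness/bubble decomposition and argue that the cheapest lost mass is a boundary (half-space Neumann) bubble of energy $\frac{1}{2N}S^{N/2}$, while the paper avoids the global decomposition entirely and instead uses Wang's localized Sobolev inequality with a partition of unity to derive the same threshold by a direct contradiction. Both are valid; yours is structurally heavier but perhaps more transparent about where the number $\frac{1}{2N}S^{N/2}$ comes from.

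There is, however, an error in your stated test-function expansions that you should be aware of. You write
$\into|\nabla v_\ep|^2=\tfrac12 S^{N/2}+O(\ep^{N-2})$ and $\into v_\ep^{2^*}=\tfrac12 S^{N/2}+O(\ep^N)$,
but these would only be the tail errors if $\partial\Omega$ were flat near $x_0$. The domain is curved, and the slab between the graph $x_N=h(x')$ and the flat boundary contributes corrections of order $\ep$ (of order $\ep|\log\ep|$ in the gradient term for $N=3$); the paper's Lemma~3.2 computes these explicitly as $\ep$ times integrals of the second fundamental form $g(x')=\sum_i\alpha_i x_i^2$. Your $O(\ep^{N-2})$ and $O(\ep^N)$ claims are therefore too optimistic, and the subsequent comparison ``negative boundary contribution dominates the positive mass one'' is comparing against the wrong scale: the relevant competitor is $\ep^1$, not $\ep^2$. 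Fortunately the argument survives anyway, for exactly the reason you identify but apply to the wrong term: the boundary integral $\intpo v_\ep^q\sim\ep^{(N-1)-\frac{(N-2)q}{2}}$ has exponent in $(0,1)$ whenever $2<q<\frac{2(N-1)}{N-2}$, so it dominates $\ep^1$ (hence both the curvature terms and the mass term) for all $N\geq 3$. In other words the conclusion is right but you should redo the bookkeeping with the correct $O(\ep)$ curvature corrections before claiming the bound. One small slip as well: the Aubin--Talenti instanton you want is \eqref{3.8}, not \eqref{1.3}; the function in \eqref{1.3} is the shifted bubble for the doubly critical problem and does not satisfy the Neumann condition on $\BR$, so using it would not give $\frac{1}{2N}S^{N/2}$ as the leading half-space energy.
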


Similarly, we can study the case where $r$ is subcritical and $q$ is
critical. For this case, we have the following existence result.
\begin{theorem}\label{t 1.3}
Suppose that $2<r<\frac{2N}{N-2}$ and $q=\frac{2(N-1)}{N-2}$, then
problem \eqref{1.1} has at least one nontrivial solution.
\end{theorem}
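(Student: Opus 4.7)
The plan is to construct a nontrivial critical point of the energy functional
\[
I(u)=\tfrac12\into\bigl(|\nabla u|^2+u^2\bigr)-\tfrac1r\into|u|^r-\tfrac1q\intpo|u|^q
\]
on $H^1(\Omega)$ via the Mountain Pass Theorem. Since $2<r<\tfrac{2N}{N-2}$, the interior nonlinearity is strictly subcritical and hence weakly continuous by Rellich; the sole source of noncompactness is the boundary term at the critical trace exponent $q=\tfrac{2(N-1)}{N-2}$. The Mountain Pass geometry is routine: the Sobolev embedding $H^1(\Omega)\hookrightarrow L^r(\Omega)$ and the trace inequality $H^1(\Omega)\hookrightarrow L^q(\partial\Omega)$ yield $I(u)\geq\tfrac12\|u\|_{H^1}^2-C_1\|u\|_{H^1}^r-C_2\|u\|_{H^1}^q\geq\alpha>0$ on a small sphere, while $I(tu_0)\to-\infty$ for any fixed $u_0\not\equiv0$ because $r,q>2$. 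Thus a Palais-Smale sequence exists at the positive Mountain Pass level
\[
c=\inf_{\gamma\in\Gamma}\max_{t\in[0,1]}I(\gamma(t)).
\]

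I would then establish a localized Palais-Smale condition: every $(PS)_c$ sequence with
\[
c<c^*:=\frac{Q^{N-1}}{2(N-1)},\qquad Q:=\inf_{u\in D^{1,2}(\R)\setminus\{0\}}\frac{\intr|\nabla u|^2}{\bigl(\int_{\BR}|u|^q\bigr)^{2/q}},
\]
admits a strongly convergent subsequence. Boundedness of $\{u_n\}$ follows by combining $I(u_n)$ with $\langle I'(u_n),u_n\rangle$ using $\min(r,q)>2$. If $u_n\rightharpoonup u$, then $u$ is a weak solution (by weak continuity of $I'$), the subcritical volume nonlinearity passes to the limit by Rellich, and the Brezis-Lieb lemma applied on both $\Omega$ and $\partial\Omega$ yields, for the residual $v_n=u_n-u$,
\[
\into|\nabla v_n|^2\to A,\quad\intpo|v_n|^q\to A,\quad I(u_n)\to I(u)+\tfrac{1}{2(N-1)}A.
\]
A concentration-compactness inequality for sequences concentrating near the boundary gives $A\geq Q\,A^{2/q}$, which forces either $A=0$ (strong convergence) or $A\geq Q^{N-1}$; since $I(u)=(\tfrac12-\tfrac1r)\into|u|^r+(\tfrac12-\tfrac1q)\intpo|u|^q\geq 0$ for any critical point, the latter alternative would give $c\geq c^*$, contradicting the hypothesis.

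Finally, to verify $c<c^*$, I would use a truncated Escobar extremizer at a boundary point $x_0\in\partial\Omega$: after locally flattening the boundary and applying a cutoff $\eta$, set
\[
v_\ep(x)=\eta(x)\,\frac{\ep^{(N-2)/2}}{\bigl((x_N+\ep)^2+|x'|^2\bigr)^{(N-2)/2}}.
\]
Standard computations give $\into|\nabla v_\ep|^2/\bigl(\intpo|v_\ep|^q\bigr)^{2/q}=Q+O(\ep^{N-2})$, while for $N\geq 5$ one has $\into v_\ep^2=O(\ep^2)$, and $\into|v_\ep|^r=O(\ep^{N-r(N-2)/2})$ with exponent strictly in $(0,2)$ for the subcritical range of $r$. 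Bounding $\max_{t\geq 0}I(tv_\ep)$ above by the maximum of the two-term functional $\tfrac{t^2}{2}\|v_\ep\|_{H^1}^2-\tfrac{t^q}{q}\|v_\ep\|_{q,\partial\Omega}^q$ and subtracting the $L^r$ contribution evaluated at the quasi-maximizer, the leading order expansion gives
\[
\max_{t\geq 0}I(tv_\ep)\leq c^*+C_1\into v_\ep^2+C_2\,\ep^{N-2}-c_3\,\ep^{N-r(N-2)/2},
\]
which lies strictly below $c^*$ for $\ep$ small because the negative exponent $N-r(N-2)/2$ is smaller than the exponents of the positive corrections. The hard part, where I expect the bulk of the technical effort, is precisely this asymptotic bookkeeping, especially in the low dimensions $N=3,4$ where $\into v_\ep^2$ carries extra logarithmic or lower-order factors and one must verify that the subcritical $L^r$ contribution still dominates for the full admissible range of $r$.
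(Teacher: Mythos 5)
Your Mountain Pass geometry and your localized Palais–Smale step are essentially the same as the paper's Lemma \ref{t 4.1}: boundedness from $\min(r,q)>2$, Brezis–Lieb splitting on $\Omega$ and $\partial\Omega$, the Rellich/compact trace for the subcritical pieces, and the Sobolev trace inequality applied to the residual $v_n$ to force $A\geq S_T^{\,N-1}$ unless $A=0$. That part is fine.

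The gap is in the level estimate. You claim that, after cutting off near $x_0\in\partial\Omega$,
\[
\frac{\into|\nabla v_\ep|^2}{\bigl(\intpo|v_\ep|^q\bigr)^{2/q}}=S_T+O(\ep^{N-2}),
\]
and you then rely on the subcritical interior term $-c_3\,\ep^{\,N-r(N-2)/2}$ to push the max below $\frac1{2(N-1)}S_T^{N-1}$. The $O(\ep^{N-2})$ is only correct on a domain whose boundary is flat near $x_0$. On a general smooth $\Omega$ the concentrating Escobar bubble feels the second fundamental form, and both the Dirichlet energy and the boundary $L^{2_*}$ norm acquire corrections of order $\ep$ (order $\ep|\ln\ep|$ when $N=3$) with coefficients proportional to the mean curvature $H(0)$; see Lemma \ref{t 4.2}(i),(ii) of the paper. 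These $O(\ep)$ terms can have either sign, and your $L^r$ term has exponent $N-r(N-2)/2\in(0,2)$, which is $\geq 1$ precisely when $r\leq 2_*$; so for $r$ in the lower part of the admissible range the curvature term would dominate your correction, and your bound gives no conclusion. Stating that the subcritical term is the leading negative correction is therefore unjustified.

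What the paper does instead is the crucial missing ingredient: pick $x_0$ as a tangency point of a ball containing $\Omega$, so that every principal curvature is bounded below by $1/R>0$; expand both the gradient integral and the boundary integral to order $\ep$ (Lemma \ref{t 4.2}); and verify, using the integral identity \eqref{4.19}, that the combined $O(\ep)$ correction to the trace quotient is strictly negative whenever $H(0)>0$ (the inequality \eqref{4.16}, valid for $N\geq4$). The interior $L^r$ term is then just a further helpful negative contribution, not the engine of the estimate. For $N=3$ the paper's Lemma \ref{t 4.3}(i) shows the curvature deficit in the gradient is $O(\ep|\ln\ep|)$, which swamps every $O(\ep)$ error, and again the $L^r$ term is not what carries the argument. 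Without a curvature expansion of this type (or a convexity/flatness hypothesis on $\partial\Omega$), your proof of $c<\frac1{2(N-1)}S_T^{N-1}$ does not go through.
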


Finally, we study the most difficult case in which both $q$ and $r$
are critical. We have the following existence result.
\begin{theorem}\label{t 1.4}
Suppose that $r=\frac{2N}{N-2}$ and $q=\frac{2(N-1)}{N-2}$, then
problem \eqref{1.1} possesses at least one nontrivial solution.
\end{theorem}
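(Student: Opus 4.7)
The plan is to apply the Mountain Pass theorem to the functional
\[
I(u)=\frac12\into(|\nabla u|^2+u^2)-\frac1r\into|u|^r-\frac1q\intpo|u|^q,\qquad u\in H^1(\Omega),
\]
with $r=\frac{2N}{N-2}$ and $q=\frac{2(N-1)}{N-2}$. Mountain pass geometry is routine: $I(0)=0$, positivity on a small sphere in $H^1(\Omega)$ follows from the continuous embeddings $H^1(\Omega)\hookrightarrow L^r(\Omega)$ and $H^1(\Omega)\hookrightarrow L^q(\partial\Omega)$, and $I(tv)\to-\infty$ as $t\to+\infty$ for any fixed $v\not\equiv 0$ since $r,q>2$. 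Boundedness of $(PS)_c$ sequences follows in the standard way from $I(u_n)\to c$ and $\langle I'(u_n),u_n\rangle\to 0$.

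The heart of the argument is to identify a compactness range $(0,c^\ast)$ for the $(PS)_c$ condition. I would apply Lions' concentration-compactness principle to the measures $|u_n|^r\,dx$ on $\bar\Omega$ and $|u_n|^q\,d\sigma$ on $\partial\Omega$ simultaneously, combine with the Brezis-Lieb lemma, and rescale at each concentration point. Interior atoms produce Aubin-Talenti bubbles on $\mathbb R^N$, each contributing at least $\frac1N S^{N/2}$, where $S$ is the best Sobolev constant for $D^{1,2}(\mathbb R^N)\hookrightarrow L^{2^\ast}(\mathbb R^N)$. Atoms on $\partial\Omega$ produce, after blow-up, nontrivial solutions on $\R$ of the doubly critical limit problem $-\Delta U=U^{r-1}$ in $\R$, $\partial_\nu U=U^{q-1}$ on $\BR$; by the Li-Zhu classification \cite{LZu} these must be (translations and dilations of) the family \eqref{1.3}, so each such atom carries at least the energy
\[
E_\ast=\tfrac1N\intr u_1^{r}+\tfrac1{2(N-1)}\intrr u_1^{q},
\]
with $u_1$ the bubble \eqref{1.3} at $\ep=1$, where we use the identity $\intr|\nabla u_1|^2=\intr u_1^{r}+\intrr u_1^{q}$ obtained by testing the limit equation against $u_1$. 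Consequently $I$ satisfies $(PS)_c$ for every $c\in(0,c^\ast)$ with $c^\ast:=\min\bigl(\tfrac1N S^{N/2},E_\ast\bigr)$.

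The final step is to push the mountain pass level strictly below $c^\ast$. I would use the test function $v_\ep=\eta\,u_\ep$, where $u_\ep$ is \eqref{1.3} rescaled to concentrate at a fixed $x_0\in\partial\Omega$ (after locally straightening $\partial\Omega$) and $\eta$ is a smooth cutoff supported near $x_0$. Exploiting the scale invariance of $\intr|\nabla u_\ep|^2$, $\intr u_\ep^r$ and $\intrr u_\ep^q$, and the above identity, an expansion in $\ep$ gives
\[
\sup_{t\ge 0}I(tv_\ep)=E_\ast-C\into v_\ep^2+O(\ep^{N-2}),
\]
and the lower-order $u^2$ correction dominates the cutoff remainder for $\ep$ small, exactly as in \cite{BN} (with the usual logarithmic borderline in $N=4$ and separate treatment in $N=3$), so $\sup_{t\ge 0}I(tv_\ep)<E_\ast$. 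A direct computation on the explicit family \eqref{1.3} yields $E_\ast\le\tfrac1N S^{N/2}$, hence $\sup_{t\ge 0}I(tv_\ep)<c^\ast$, and the Mountain Pass Theorem furnishes a nontrivial critical point of $I$, which is a weak solution of \eqref{1.1}.

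The principal obstacle is the boundary concentration analysis: when $|u_n|^r\,dx$ and $|u_n|^q\,d\sigma$ both charge a point of $\partial\Omega$, one must show that the concentrations occur at matching rates and that the rescaled profile solves the coupled Li-Zhu system, so that the defect of compactness is quantized in indivisible increments of size $E_\ast$, rather than splitting into cheaper pure-interior or pure-trace contributions at the same boundary point. Establishing this coupling, together with the sharp, dimension-sensitive Brezis-Nirenberg-type estimate of the $u^2$ correction in the test-function expansion, is the delicate core of the argument.
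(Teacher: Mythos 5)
The critical flaw is the mechanism you propose for pushing the mountain pass level strictly below the threshold. In this problem the functional carries the mass term with a \emph{positive} sign, $I(u)=\frac12\into(|\nabla u|^2+u^2)-\cdots$, so $\into v_\ep^2$ enters $I(tv_\ep)$ with a $+$, not the $-C\into v_\ep^2$ you wrote; there is no $-\lambda\int u^2$ term available as in Brezis--Nirenberg, and the mass term is an obstruction rather than a help. The mechanism that actually works, and that the paper uses, is geometric: one concentrates $u_\ep$ at a boundary point $x_0$ of positive mean curvature, and the curvature of $\partial\Omega$ cuts a piece out of the half-space so that $\into|\nabla u_\ep|^2$, $\into u_\ep^{2^*}$ and $\intpo u_\ep^{2_*}$ each deviate from their half-space values by a computable $O(\ep)$ term proportional to $H(x_0)>0$. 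The resulting net correction to $\sup_{t\ge0}I(tu_\ep)$ is a strictly negative $O(\ep)$ (or $O(\ep|\ln\ep|)$ when $N=3$) quantity, which dominates the positive $O(\ep^2)$ (resp.\ $O(\ep^2|\ln\ep|)$, $O(\ep)$) mass term and the cutoff remainder. Without this curvature expansion your test-function estimate does not close; on a flat boundary there is in fact no strict inequality.

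A secondary gap is in the compactness threshold. Your $c^\ast=\min\bigl(\tfrac1NS^{N/2},E_\ast\bigr)$ omits the cheapest boundary loss of compactness: $|u_n|^{2^*}\,dx$ concentrating at a point of $\partial\Omega$ while the trace term $|u_n|^{2_*}\,d\sigma$ vanishes there, which after blow-up produces the half-space Neumann Aubin--Talenti bubble with energy $\tfrac1{2N}S^{N/2}$, strictly smaller than $\tfrac1NS^{N/2}$. This is exactly the ``splitting into cheaper pure-interior contributions'' you flag as the principal obstacle but do not resolve. The paper's Lemma~\ref{t 5.1} handles it directly by splitting on whether $\intpo|v_n|^{2_*}\,dS\to0$ and then invoking the comparison $\tfrac1{2N}S^{N/2}>c_\infty$ (your $E_\ast$, since $E_\ast=c_\infty$ by the Pohozaev-type identity), so the threshold is $c_\infty$ alone. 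Your inequality $E_\ast\le\tfrac1NS^{N/2}$ is the right idea with the wrong constant; as stated your $(PS)_c$ window is too wide. Aside from these two points the overall scheme --- mountain pass, local $(PS)_c$ via Brezis--Lieb/concentration, boundary bubble expansion --- matches the paper's.
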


The proof of the above theorems is based on the variational methods.
Obiviously, the solutions of problem \eqref{1.1} correspond to the
critical points of the functional
\begin{equation}\label{1.5}
I(u)=\frac 12\into |\nabla u|^2+u^2\,dx-\frac 1r\into
|u|^r\,dx-\frac 1{q}\intpo |u|^q\,dS
\end{equation}
defined on $H^1(\Omega)$. According to the well-known Sobolev
inequality and Sobolev trace inequality, we know that the functional
$I$ is well-defined and is of $C^2$ class. If both $q$ and $r$ are
subcritical, then the imbeddings $ H^1(\Omega)\hookrightarrow
L^r(\Omega)$ and $ H^1(\Omega)\hookrightarrow L^q(\partial \Omega)$
are compact. So it is easy to verify that $I$ satisfies the usual
$(PS)$ condition. That is, if $I(u_n)$ is bounded and $I'(u_n)\to
0$, then $u_n$ must have a convergent subsequence. However, if $r$
or(and) $q$ is critical, then the imbedding $
H^1(\Omega)\hookrightarrow L^r(\Omega)$ or(and) $
H^1(\Omega)\hookrightarrow L^q(\partial \Omega)$ is not compact. As
a result, the functional $I$ does not satisfies the $(PS)$
condition. To overcome this difficulty, one usually uses the
$(PS)_c$ condition to substitute the $(PS)$ condition. This method
has been widely used in dealing with elliptic equation involving
critical exponent, see \cite{BN}\cite{HYu}\cite{T} and etc. The
spirit of this paper is the same as the above works. We use $(PS)_c$
condition to substitute the usual $(PS)$ condition. However, due to
the different cases of $q,r$, we need to find different intervals so
that $I$ satisfies the $(PS)_c$ condition for $c$ in those
intervals. More precisely, if $r=\frac{2N}{N-2}$ and
$q<\frac{2(N-1)}{N-2}$, we will show that $I$ satisfies the $(PS)_c$
condition for $c\in (0,\frac{1}{2N}S^{\frac N2})$, where $S$ is the
usual Sobolev constant defined by
$$
S=\inf_{u\in D^{1,2}(\mathbb R^N),u\neq 0}\frac{\|\nabla
u\|^2_{L^2(\mathbb R^N)}}{\|u\|^2_{L^{2^*}(\mathbb R^N)}}.
$$
However, for the case $r<\frac{2N}{N-2}$ and $q=\frac{2(N-1)}{N-2}$,
we will show that $I$ satisfies the $(PS)_c$ condition for $c\in
(0,\frac{1}{2(N-1)}S_T^{N-1})$, where $S_T$ is the best constant of
Sobolev trace inequality which is defined by
$$
S_T=\inf_{u\in D^{1,2}(\mathbb R_+^N),u\neq 0}\frac{\|\nabla
u\|^2_{L^2(\R)}}{\|u(x',0)\|^2_{L^{2_*}(\partial\R)}}.
$$
The most difficult problem is the case in which both $r$ and $q$ are
critical, we will show that the functional $I$ satisfies the
$(PS)_c$ condition for $c\in (0,c_\infty)$, where $c_\infty$ is the
ground state level of equation \eqref{1.2}. In \cite{LZu}, the
authors proved that $c_\infty$ can be attained by some functions and
they gave all the expressions of these functions.

After the $(PS)_c$ condition is proved, then we need to show that
the Mountain Pass level of the functional $I$ indeed belongs to
these intervals. Then the standard Mountain Pass Theorem implies the
above theorems.

The rest of this paper is devoted to the proof of the above
theorems. In Section 2, we establish the multiple solutions result
for the subcritical problem, i.e., we prove Theorem \ref{t 1.1}. We
guess this result is well-known but we can't find the proper
reference so we give its proof to keep this paper self-contained. In
Section 3,4,5, we prove Theorem \ref{t 1.2}, Theorem \ref{t 1.3} and
Theorem \ref{t 1.4} respectively. In the following, we denote $C$ by
a positive constant, which may vary from line to line.
\section{Proof of Theorem \ref{t 1.1}}
In this section, we study the multiple solutions of problem
\eqref{1.1} under the subcritical assumptions, i.e.,
$2<r<\frac{2N}{N-2}$ and $2<q<\frac{2(N-1)}{N-2}$. We need the
following technical lemma, see Theorem 9.12 in \cite{Ra}.
\begin{lemma}\label{t 2.1}
Let $E$ be an infinite dimensional Banach space and let $I\in
C^2(E,\mathbb R)$ be even, satisfying $(PS)$, and $I(0)=0$. If
$E=V\oplus
X$, where $V$ is finite dimensional, and $I$ satisfies\\
(i) there exist constants $\rho,\alpha>0$, such that $I_{\partial
B_\rho\cap X}\geq \alpha$, and\\
(ii) for each finite dimensional subspace $\tilde E\subset E$, there
is an $R=R(\tilde E)$ such that $I\leq 0$ on $\tilde E\setminus
B_{R(\tilde E)}$,\\
then $I$ possesses an unbounded sequence of critical values.
\end{lemma}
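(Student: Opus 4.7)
The plan is to follow the classical proof of the symmetric Mountain Pass (or fountain) theorem, producing an unbounded sequence of minimax critical values via the Krasnoselskii genus. Since $E$ is infinite dimensional, choose $\{e_j\}_{j\geq1}\subset X$ such that the spaces $E_m:=V\oplus\mathrm{span}\{e_1,\dots,e_m\}$ form an increasing chain whose union is dense in $E$. Hypothesis (ii) applied to $\tilde E=E_m$ gives a radius $R_m>0$ with $I\leq 0$ on $E_m\setminus B_{R_m}$. Set $D_m=\overline{B_{R_m}}\cap E_m$ and define
$$\Gamma_m=\{h\in C(D_m,E):\ h\text{ is odd and }h|_{\partial D_m}=\mathrm{id}\},\qquad c_m=\inf_{h\in\Gamma_m}\sup_{u\in D_m}I(h(u)).$$
Since $\mathrm{id}\in\Gamma_m$, each $c_m$ is finite; by extending maps in $\Gamma_m$ as the identity on $D_{m+1}\setminus D_m$, the sequence $c_m$ is nondecreasing.

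The first key step is an intersection lemma: for every $h\in\Gamma_m$, the image $h(D_m)$ meets $\partial B_\rho\cap X$. Let $P\colon E\to V$ be the continuous projection along $X$, and set $O=\{u\in D_m:\|h(u)\|<\rho\}$, a bounded symmetric open neighborhood of $0$ in $E_m$. The map $u\mapsto Ph(u)$ is odd and continuous on $\partial O$ and takes values in $V$ with $\dim V<\dim E_m$; the Borsuk--Ulam theorem then produces $u^*\in\partial O$ with $Ph(u^*)=0$, so $h(u^*)\in X$ with $\|h(u^*)\|=\rho$. Combined with hypothesis (i), this yields $c_m\geq\alpha>0$.

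Next I would show that each $c_m$ is a critical value. Because $I$ is even, one constructs an odd, locally Lipschitz pseudo-gradient vector field for $I$ (averaging a standard pseudo-gradient against the $\mathbb Z/2$-action), and thereby an odd deformation $\eta_t$. Since $I\leq 0<c_m$ on $\partial D_m$, $\eta_t$ fixes $\partial D_m$ for small $t$, so $\eta_t\circ h$ remains in $\Gamma_m$. If $c_m$ were a regular value, the equivariant deformation lemma together with (PS) would push $\{I\leq c_m+\ep\}$ into $\{I\leq c_m-\ep\}$ for small $\ep>0$; composing with a near-optimal $h\in\Gamma_m$ contradicts the definition of $c_m$. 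When several values coincide, $c_m=c_{m+1}=\dots=c_{m+p}=c$, the same flow argument combined with the subadditivity of the genus forces $\gamma(K_c)\geq p+1$, where $K_c$ is the critical set at level $c$.

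Finally, I would derive unboundedness of $\{c_m\}$ by contradiction. If $c_m\leq M$ for every $m$, the set of critical points with values in $[\alpha,M]$ is compact by (PS) and therefore has finite Krasnoselskii genus; the previous step would then force that genus to be arbitrarily large, a contradiction. The main obstacle is the equivariant deformation argument: one must arrange the pseudo-gradient flow to be odd without disturbing the identity on $\partial D_m$, and carefully track the genus along sublevel sets. All other ingredients (Borsuk--Ulam intersection and minimax bookkeeping) are routine once the symmetric flow is available.
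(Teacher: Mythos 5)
The paper offers no proof of this lemma: it is quoted verbatim from Rabinowitz (Theorem~9.12 of \cite{Ra}) and used as a black box, so your sketch has to be measured against the standard proof there. Your architecture is the right one and matches it in outline: exhaust $E$ by $E_m=V\oplus\mathrm{span}\{e_1,\dots,e_m\}$, use (ii) to produce the caps $D_m$, prove $c_m\geq\alpha$ by a Borsuk--Ulam intersection of $h(D_m)$ with $\partial B_\rho\cap X$, and then run an odd deformation/genus argument. The intersection lemma and the lower bound $c_m\geq\alpha$ are correct as you state them (one small slip: monotonicity of $c_m$ comes from \emph{restricting} an $h\in\Gamma_{m+1}$ to $\overline{B_{R_{m+1}}}\cap E_m$, with compatible radii; the extension by the identity on $D_{m+1}\setminus D_m$ that you propose is discontinuous, since $D_m$ has empty interior in $E_{m+1}$ and lies in the closure of its complement).

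The genuine gap is in your last two steps, and it is exactly the non-routine content of the theorem. With your minimax classes --- odd maps $h:D_m\to E$ equal to the identity on $\partial D_m$ --- neither the multiplicity statement ($c_m=\dots=c_{m+p}=c\Rightarrow\gamma(K_c)\geq p+1$) nor the concluding contradiction can be executed, because these classes are not stable under excision: after deforming you must delete from $h(D_{m+p})$ a closed symmetric neighborhood $N$ of $K_c$ with $\gamma(N)\leq p$ and recognize the remainder as an admissible competitor at level $m$, but a set of the form $h(\overline{D_{m+p}\setminus h^{-1}(N)})$ is not the image of any map in your $\Gamma_m$. Rabinowitz's proof works instead with classes of \emph{sets}, $\Gamma_j=\{h(\overline{D_m\setminus Y}): h \text{ odd},\ h=\mathrm{id} \text{ on } \partial D_m,\ m\geq j,\ Y \text{ closed symmetric},\ \gamma(Y)\leq m-j\}$, which are closed under such excisions and under odd maps fixing $\{I\leq 0\}$. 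Second, even granting the multiplicity statement, your final sentence does not close the argument: if the $c_m$ are bounded they need not coincide infinitely often, so "the previous step'' gives nothing. The correct ending sets $c^*=\lim c_m$, takes a $\delta$-neighborhood $N$ of the compact set $\{u: I'(u)=0,\ \alpha\leq I(u)\leq c^*\}$ with $\gamma(N)=q<\infty$, deforms $\{I\leq c^*+\ep\}\setminus N$ into $\{I\leq c^*-\ep\}$ by an odd deformation, and uses the excision property with index gap $q$ to contradict $c_j>c^*-\ep$ for large $j$. Both repairs are standard, but without them the proof as written does not establish unboundedness of the critical values.
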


To prove Theorem \ref{t 1.1}, we only need to verify that $I$
satisfies the conditions in Lemma \ref{t 2.1} under the assumptions
of Theorem \ref{t 1.1}. This is composed of the following lemmas. We
first show that $I$ satisfies the $(PS)$ condition.
\begin{lemma}\label{t 2.2}
Suppose that $2<r<\frac{2N}{N-2}$ and $2<q<\frac{2(N-1)}{N-2}$, then
$I$ satisfies the $(PS)$ condition.
\end{lemma}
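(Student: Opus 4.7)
The plan is to follow the standard two-step Palais--Smale argument: first prove boundedness of any $(PS)$ sequence, then upgrade weak convergence to strong convergence by exploiting the compactness of the subcritical Sobolev and Sobolev-trace embeddings, which is precisely where the subcritical assumption will enter.

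First I would take a sequence $(u_n)\subset H^1(\Omega)$ with $I(u_n)$ bounded and $I'(u_n)\to 0$ in $H^1(\Omega)^{-1}$. Setting $\theta=\min\{r,q\}>2$, I would form the combination
\begin{equation*}
I(u_n)-\tfrac{1}{\theta}\langle I'(u_n),u_n\rangle=\Bigl(\tfrac12-\tfrac1\theta\Bigr)\|u_n\|_{H^1}^2+\Bigl(\tfrac1\theta-\tfrac1r\Bigr)\into |u_n|^r\,dx+\Bigl(\tfrac1\theta-\tfrac1q\Bigr)\intpo|u_n|^q\,dS.
\end{equation*}
Since $\theta\le r$ and $\theta\le q$, the last two bracketed coefficients are nonnegative, while $\tfrac12-\tfrac1\theta>0$. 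The left-hand side is controlled by $C+\tfrac{1}{\theta}\|I'(u_n)\|\,\|u_n\|_{H^1}$, so a standard Young-type absorption gives $\|u_n\|_{H^1}\le C$.

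Next, by reflexivity I would extract a subsequence (not relabeled) with $u_n\rightharpoonup u$ weakly in $H^1(\Omega)$. Because $2<r<\tfrac{2N}{N-2}$, the Rellich--Kondrachov theorem yields the compact embedding $H^1(\Omega)\hookrightarrow L^r(\Omega)$, hence $u_n\to u$ in $L^r(\Omega)$. Because $2<q<\tfrac{2(N-1)}{N-2}$, the compact Sobolev trace embedding $H^1(\Omega)\hookrightarrow L^q(\partial\Omega)$ yields $u_n|_{\partial\Omega}\to u|_{\partial\Omega}$ in $L^q(\partial\Omega)$. Using H\"older, these two compactness statements imply
\begin{equation*}
\into\bigl(|u_n|^{r-2}u_n-|u|^{r-2}u\bigr)(u_n-u)\,dx\to 0,\qquad \intpo\bigl(|u_n|^{q-2}u_n-|u|^{q-2}u\bigr)(u_n-u)\,dS\to 0.
\end{equation*}

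Finally I would test $I'(u_n)-I'(u)$ against $u_n-u$. Writing this out gives
\begin{equation*}
\|u_n-u\|_{H^1}^2=\langle I'(u_n)-I'(u),u_n-u\rangle+\into\!\bigl(|u_n|^{r-2}u_n-|u|^{r-2}u\bigr)(u_n-u)\,dx+\intpo\!\bigl(|u_n|^{q-2}u_n-|u|^{q-2}u\bigr)(u_n-u)\,dS.
\end{equation*}
The first term tends to $0$ because $I'(u_n)\to 0$ in the dual, $u_n-u\rightharpoonup 0$, and $(u_n)$ is bounded (so $I'(u)$ applied to $u_n-u$ also vanishes); the remaining two terms vanish by the previous step. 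Hence $u_n\to u$ strongly in $H^1(\Omega)$, proving $(PS)$. There is no real obstacle here: the only potentially subtle point is verifying that the nonlinear duality pairings converge, but this follows routinely from H\"older's inequality combined with the uniform $H^1$-bound and the two compact embeddings, both of which are available strictly in the subcritical regime.
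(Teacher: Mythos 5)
Your proof is correct and follows the same overall structure as the paper's: establish boundedness of the $(PS)$ sequence via the combination $I(u_n)-\tfrac{1}{\theta}\langle I'(u_n),u_n\rangle$, extract a weak limit, and upgrade to strong convergence using the compact subcritical Sobolev and trace embeddings. Two small differences in execution are worth noting. For boundedness, the paper splits into the cases $r\geq q$ and $r<q$ (using $\tfrac1q$ or $\tfrac1r$ as the multiplier, respectively), whereas you handle both at once by choosing $\theta=\min\{r,q\}$; this is a cleaner but logically equivalent presentation. For the strong convergence step, the paper shows $\|u_n\|\to\|u\|$ by passing to the limit in $\langle I'(u_n),u_n\rangle=o(1)$, which implicitly requires first observing that the weak limit $u$ satisfies $\|u\|^2=\int_\Omega|u|^r\,dx+\int_{\partial\Omega}|u|^q\,dS$ (i.e.\ that $u$ is a critical point), and then invoking the fact that weak convergence plus norm convergence gives strong convergence in a Hilbert space. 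You instead pair $I'(u_n)-I'(u)$ with $u_n-u$ and kill the nonlinear terms directly by H\"older together with the compact embeddings. Both routes are standard and correct; yours has the minor advantage of not needing the intermediate observation that $u$ is a weak solution.
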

\begin{proof}
Let $\{u_n\}$ be a (PS) sequence, that is $|I(u_n)|$ is bounded and
$I'(u_n)\to 0$, we need to show that $\{u_n\}$ has a convergent
subsequence.

We first show that $\{u_n\}$ is bounded. By means of $(PS)$
sequence, we have
\begin{equation}\label{2.1}
\frac 12\into |\nabla u_n|^2+u_n^2\,dx-\frac 1r\into
|u_n|^r\,dx-\frac 1{q}\intpo |u_n|^q\,dS\leq C
\end{equation}
and
\begin{equation}\label{2.2}
\into |\nabla u_n|^2+u_n^2\,dx-\into |u_n|^r\,dx-\intpo
|u_n|^q\,dS=o(1)\|u_n\|.
\end{equation}
If $r\geq q$, then we infer from the above two equations that
$$
(\frac 12-\frac 1q)\|u_n\|^2+(\frac 1q-\frac 1r) \into
|u_n|^r\,dx\leq C+o(1)\|u_n\|,
$$
which implies that $\|u_n\|$ is bounded. On the other hand, if
$r<q$, then we can still deduce from equation \eqref{2.1} and
equation \eqref{2.2} that
$$
(\frac 12-\frac 1r)\|u_n\|^2+(\frac 1r-\frac 1q) \intpo
|u_n|^q\,dS\leq C+o(1)\|u_n\|,
$$
which also implies that $\|u_n\|$ is bounded. So we proved that
$\{u_n\}$ is bounded.

Next, we show that $\{u_n\}$ has a convergent subsequence. Since
$\{u_n\}$ is bounded, we can suppose that, up to a subsequence,
$u_n\rightharpoonup u$. Then the Sobolev compact imbedding theorem
implies that
$$
\into |u_n|^r\,dx\to \into |u|^r\,dx
$$
and
$$
\intpo |u_n|^q\,dS\to \intpo |u|^q\,dS.
$$
Hence, we deduce from equation \eqref{2.2} that
$$
\|u_n\|^2\to \into |u|^r\,dx+\intpo |u|^q\,dS=\|u\|^2.
$$
Since $u_n\rightharpoonup u$ and $\|u_n\|\to \|u\|$, we get that
$u_n\to u$ strongly in $H^1(\Omega)$. This proves this lemma.

\end{proof}

Let $V=\emptyset$ and $E=X=H^1(\Omega)$ in Lemma \ref{t 2.1}, our
next lemma shows that (i) of Lemma \ref{t 2.1} holds.
\begin{lemma}\label{t 2.3}
There exist constants $\rho,\alpha>0$, such that $I_{\partial
B_\rho\cap X}\geq \alpha$.
\end{lemma}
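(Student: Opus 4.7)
The plan is to estimate $I(u)$ from below by a scalar function of $\|u\|$ that is strictly positive on a small sphere. Since $V=\emptyset$ and $X=H^1(\Omega)$, there is no splitting to worry about, and the claim reduces to showing that on a small enough sphere $\partial B_\rho\subset H^1(\Omega)$, the functional $I$ is bounded below by a positive constant.

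First I will invoke the two embeddings that are available in the subcritical regime of Theorem \ref{t 1.1}. The hypothesis $2<r<\frac{2N}{N-2}$ gives the continuous (in fact compact) Sobolev embedding $H^1(\Omega)\hookrightarrow L^r(\Omega)$, while $2<q<\frac{2(N-1)}{N-2}$ gives the continuous (in fact compact) Sobolev trace embedding $H^1(\Omega)\hookrightarrow L^q(\partial\Omega)$. These yield constants $C_1,C_2>0$ such that
$$
\into |u|^r\,dx \le C_1 \|u\|^r, \qquad \intpo |u|^q\,dS \le C_2 \|u\|^q,
$$
where $\|u\|$ denotes the standard $H^1(\Omega)$-norm $\bigl(\into |\nabla u|^2+u^2\,dx\bigr)^{1/2}$.

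Plugging these into the definition \eqref{1.5} of $I$, I obtain
$$
I(u)\ \ge\ \tfrac12\|u\|^2 - \tfrac{C_1}{r}\|u\|^r - \tfrac{C_2}{q}\|u\|^q \ =\ \tfrac12\|u\|^2\Bigl(1-\tfrac{2C_1}{r}\|u\|^{r-2}-\tfrac{2C_2}{q}\|u\|^{q-2}\Bigr).
$$
Since $r-2>0$ and $q-2>0$, the bracketed factor tends to $1$ as $\|u\|\to 0^+$. I will therefore choose $\rho>0$ small enough that $\tfrac{2C_1}{r}\rho^{r-2}+\tfrac{2C_2}{q}\rho^{q-2}\le \tfrac12$, and then set $\alpha=\tfrac{\rho^2}{4}$. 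For any $u\in X$ with $\|u\|=\rho$ this gives $I(u)\ge \alpha>0$, which is exactly the required estimate.

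There is essentially no obstacle here: the argument is the standard Mountain Pass geometry verification, and the only inputs are the two continuous embeddings, which are strict in the subcritical range considered in Theorem \ref{t 1.1}. The compactness of the embeddings is not needed for this lemma (it was used in Lemma \ref{t 2.2}); mere continuity suffices.
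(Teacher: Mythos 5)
Your proof is correct and follows exactly the same route as the paper's: apply the Sobolev and trace embeddings to get $I(u)\ge\tfrac12\|u\|^2-C\|u\|^r-C\|u\|^q$ and then exploit $r,q>2$ to choose a small sphere on which this quantity is bounded below by a positive constant. You have simply spelled out the choice of $\rho$ and $\alpha$ more explicitly than the paper does.
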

\begin{proof}
By the Sobolev embedding theorem and Sobolev trace inequality, we
deduce that
$$
I(u)\geq \frac 12\|u\|^2-C\|u\|^r-C\|u\|^q.
$$
Hence, we can choose $\alpha,\rho>0$ small enough, such that
$$
I_{\partial B_\rho\cap X}\geq \alpha>0.
$$
\end{proof}

Finally, we verify condition (ii) in Lemma \ref{t 2.1}, we have the
following result.
\begin{lemma}\label{t 2.4}
Let $\tilde E\subset H^1(\Omega)$ be a finite dimensional subspace,
then there exists $R=R(\tilde E)$ such that $I\leq 0$ on $\tilde
E\setminus B_{R(\tilde E)}$.
\end{lemma}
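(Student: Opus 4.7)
The plan is to exploit the equivalence of all norms on the finite-dimensional subspace $\tilde E$, combined with the fact that both nonlinear terms in $I$ have exponents strictly greater than $2$, so that super-quadratic growth beats the quadratic kinetic term.

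First, I would discard the boundary term: since $q>2$, one has $-\frac{1}{q}\int_{\partial\Omega}|u|^q\,dS\le 0$, giving the upper bound
\[
I(u)\;\le\;\frac12\|u\|^2-\frac1r\int_\Omega |u|^r\,dx
\]
for every $u\in H^1(\Omega)$. (Alternatively one could keep it and use it, but it is not needed.) The key observation is that $\|\cdot\|_{L^r(\Omega)}$ is a genuine \emph{norm} on $H^1(\Omega)$, not merely a seminorm, so on the finite-dimensional subspace $\tilde E$ it is equivalent to $\|\cdot\|_{H^1(\Omega)}$; there exists a constant $c_{\tilde E}>0$, depending only on $\tilde E$, such that
\[
\|u\|_{L^r(\Omega)}\;\ge\;c_{\tilde E}\,\|u\|\qquad\text{for all }u\in\tilde E.
\]

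Plugging this into the upper bound yields, for $u\in\tilde E$,
\[
I(u)\;\le\;\frac12\|u\|^2-\frac{c_{\tilde E}^{\,r}}{r}\,\|u\|^{r}.
\]
Since $r>2$, the right-hand side tends to $-\infty$ as $\|u\|\to\infty$, and in particular is $\le 0$ once $\|u\|\ge R(\tilde E)$ for a suitably large $R(\tilde E)$ depending on $\tilde E$ (one may take $R(\tilde E)=(r/(2c_{\tilde E}^{\,r}))^{1/(r-2)}$). This gives precisely the conclusion $I\le 0$ on $\tilde E\setminus B_{R(\tilde E)}$.

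There is no genuine obstacle here: the argument is routine once one notices that the $L^r(\Omega)$ norm is nondegenerate on any subspace of $H^1(\Omega)$, so equivalence of norms on $\tilde E$ immediately converts the $-\frac1r\|u\|_{L^r}^r$ term into a coercive super-quadratic term in the $H^1$-norm. The only mildly delicate point to remember is that one should \emph{not} try to argue via the boundary term alone, since $\|\cdot\|_{L^q(\partial\Omega)}$ can vanish on nonzero elements of $H^1(\Omega)$ (e.g.\ compactly supported functions) and is therefore only a seminorm.
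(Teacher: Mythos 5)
Your proof is correct and uses the same core idea as the paper: equivalence of norms on the finite-dimensional subspace $\tilde E$ turns the subtracted super-quadratic term into one that dominates $\frac12\|u\|^2$ for large $\|u\|$. The paper's version writes $I(u)\le C_1\|u\|^2-C_2\|u\|^r-C_3\|u\|^q$ and invokes norm equivalence for both lower-order terms, which is slightly imprecise since $u\mapsto\|u\|_{L^q(\partial\Omega)}$ is only a seminorm on $H^1(\Omega)$ (it vanishes on compactly supported functions, so $\tilde E$ could lie in its kernel); the paper's conclusion still holds because that term is subtracted and hence can be dropped. You spot this explicitly, discard the boundary term, and drive the argument through the genuine $L^r(\Omega)$-norm alone, which is the cleaner and fully rigorous way to write it.
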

\begin{proof}
Since $\tilde E$ is finite dimensional, then any norms on $\tilde E$
are equivalent. So we have
$$
I(u)\leq C_1\|u\|^2-C_2\|u\|^r-C_3\|u\|^q
$$
for any $u\in \tilde E$. Moreover, we deduce from
$2<r<\frac{2N}{N-2}$ and $2<q<\frac{2(N-1)}{N-2}$ that there exists
$R=R(\tilde E)>0$ such that
$$
I(u)<0
$$
on $\tilde E\setminus B_{R(\tilde E)}$.
\end{proof}
\begin{proof}[Proof of Theorem \ref{t 1.1}:] It is easy to see that
$I\in C^2(H^1(\Omega),\mathbb R)$ is even. Moreover, we infer from
Lemma \ref{t 2.2}, Lemma \ref{t 2.3} and Lemma \ref{t 2.4} that $I$
satisfies all the conditions in Lemma \ref{t 2.1}. So we conclude
that problem \eqref{1.1} possesses infinitely many solutions under
the assumptions of Theorem \ref{t 1.1}.

\end{proof}
\section{Proof of Theorem \ref{t 1.2}}
In this section, we study the existence result for problem
\eqref{1.1} under the assumption that $r$ is the critical exponent
and $q$ is subcritical. We first have the following local
compactness result.
\begin{lemma}\label{t 3.1}
Suppose that $r=\frac{2N}{N-2}$ and $2<q<\frac{2(N-1)}{N-2}$, then
$I$ satisfies the $(PS)_c$ condition for $c\in (0,\frac
1{2N}S^{\frac N2})$, where $S$ is the best constant of Sobolev
inequality defined by
$$
S=\inf_{u\in D^{1,2}(\mathbb R^N),u\neq 0}\frac{\|\nabla
u\|^2_{L^2(\mathbb R^N)}}{\|u\|^2_{L^{2^*}(\mathbb R^N)}}.
$$
\end{lemma}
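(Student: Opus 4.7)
The plan is to adapt the Brezis--Nirenberg compactness argument to our setting; the threshold $\frac{1}{2N}S^{N/2}$ (rather than $\frac{1}{N}S^{N/2}$) appears precisely because a concentration bubble at a boundary point of $\Omega$ carries only half of its mass inside $\Omega$. Let $\{u_n\}\subset H^1(\Omega)$ be a $(PS)_c$ sequence. First I would establish boundedness exactly as in Lemma \ref{t 2.2}: setting $\theta=\min(r,q)>2$ and forming the combination $I(u_n)-\frac{1}{\theta}\langle I'(u_n),u_n\rangle$ yields
$$
\left(\frac{1}{2}-\frac{1}{\theta}\right)\|u_n\|^2 \le C+o(1)\|u_n\|
$$
after discarding the non-negative nonlinear pieces, so $\|u_n\|\le C$.

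Next, pass to a weakly convergent subsequence $u_n\rightharpoonup u$ in $H^1(\Omega)$. Since $q<2(N-1)/(N-2)$ the trace $H^1(\Omega)\hookrightarrow L^q(\partial\Omega)$ is compact, so $u_n\to u$ in $L^q(\partial\Omega)$; combining almost-everywhere convergence with boundedness of $\{|u_n|^{r-2}u_n\}$ in $L^{r/(r-1)}(\Omega)$ lets us pass to the limit in $I'(u_n)\to 0$ to see that $u$ is a weak solution of \eqref{1.1}. In particular $\langle I'(u),u\rangle=0$ and
$$
I(u)=\left(\frac{1}{2}-\frac{1}{r}\right)\into|u|^r\,dx+\left(\frac{1}{2}-\frac{1}{q}\right)\intpo|u|^q\,dS\ge 0.
$$

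Set $v_n=u_n-u$. The Brezis--Lieb lemma gives $\|v_n\|^2=\|u_n\|^2-\|u\|^2+o(1)$ and $\into|v_n|^r\,dx=\into|u_n|^r\,dx-\into|u|^r\,dx+o(1)$, while the compact trace yields $\intpo|v_n|^q\,dS\to 0$. Subtracting $\langle I'(u),u\rangle=0$ from $\langle I'(u_n),u_n\rangle=o(1)$ produces
$$
\|v_n\|^2-\into|v_n|^r\,dx\to 0.
$$
Let $b\ge 0$ be the common limit of $\|v_n\|^2$ and $\into|v_n|^r\,dx$. If $b=0$ then $v_n\to 0$ in $H^1(\Omega)$ and we are done, so the task reduces to excluding $b>0$.

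This is the main obstacle and is precisely where the hypothesis $c<\frac{1}{2N}S^{N/2}$ is used. Since $v_n\rightharpoonup 0$ on the bounded domain $\Omega$, applying Lions' concentration-compactness principle to the weak$^*$ limits of the measures $|v_n|^r\,dx$ and $|\nabla v_n|^2\,dx$ produces an at-most countable collection of points $\{x_j\}\subset\overline\Omega$ and non-negative numbers $\nu_j,\mu_j$ with $\nu_j^{2/r}\le S_j^{-1}\mu_j$, where $S_j=S$ at an interior point and $S_j=S/2^{2/N}$ at a boundary point; the boundary constant is obtained by flattening $\partial\Omega$ near $x_j$ and using even reflection across the boundary, which doubles both the Dirichlet integral and the $L^{2^*}$-mass. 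Using $\sum\nu_j=b$, $\sum\mu_j\le b$, the subadditivity $\sum\nu_j^{2/r}\ge(\sum\nu_j)^{2/r}$ (valid because $2/r\in(0,1)$), and $S_j\ge S/2^{2/N}$, one obtains $(S/2^{2/N})\,b^{2/r}\le b$, whence $b\ge S^{N/2}/2$. Combined with $I(u)\ge 0$,
$$
c=\lim_{n\to\infty}I(u_n)=I(u)+\left(\frac{1}{2}-\frac{1}{r}\right)b\ge \frac{1}{N}\cdot\frac{S^{N/2}}{2}=\frac{1}{2N}S^{N/2},
$$
contradicting $c<\frac{1}{2N}S^{N/2}$. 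Hence $b=0$, giving $u_n\to u$ strongly in $H^1(\Omega)$.
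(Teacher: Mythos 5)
Your proof is correct and reaches the same conclusion, but by a recognizably different technical route. The paper's argument does not invoke Lions' concentration--compactness; instead it assumes $v_n\not\to 0$, takes a finite partition of unity $\{\vp_\alpha\}$ on $\bar\Omega$ with small supports, and applies a localized Sobolev inequality (Lemma 2.1 of Wang \cite{W}) asserting
$$\into|\nabla\vp|^2\,dx\geq\bigl(2^{-2/N}S-\ep\bigr)\Bigl(\into|\vp|^{2^*}\,dx\Bigr)^{2/2^*}$$
for functions with support of small diameter in $\bar\Omega$. Summing over the partition, absorbing the lower-order commutator term $\into v_n^2\,dx\to 0$, and letting $\ep\to 0$ gives $c-I(u)\geq\frac1{2N}S^{N/2}$, the same contradiction you reach. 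You instead package the boundary phenomenon into the Lions dichotomy: the Dirichlet and $L^{2^*}$ defect measures concentrate at points $x_j\in\bar\Omega$ with local constant $S$ in the interior and $2^{-2/N}S$ on $\partial\Omega$, and the superadditivity of $t\mapsto t^{2/2^*}$ forces $b\geq S^{N/2}/2$. The geometric input in both routes is identical --- even reflection across a locally flattened boundary halves the half-space Sobolev constant to $2^{-2/N}S$ --- so the two proofs are equivalent in depth; the paper's version is slightly more self-contained since it rests on a single quotable lemma and avoids deriving a boundary variant of concentration--compactness, whereas yours makes the concentration mechanism explicit, which some readers may find more transparent. Your preliminary observations (that the weak limit $u$ is a solution, so $I(u)\geq 0$; that the trace term vanishes along $v_n$ by compact trace embedding; and that $b=N(c-I(u))$) all match the paper's equations (3.3)--(3.5).

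One small point worth tightening: the identity $\|v_n\|^2=\|u_n\|^2-\|u\|^2+o(1)$ in $H^1(\Omega)$ is a consequence of weak convergence alone (expand the inner product), not of the Brezis--Lieb lemma; Brezis--Lieb is what you need for the nonquadratic term $\into|v_n|^{2^*}\,dx$.
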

\begin{proof}
Let $\{u_n\}\subset H^1(\Omega)$ be a $(PS)_c$ sequence for $I$ with
$c\in (0,\frac 1{2N}S^{\frac N2})$, that is,
$$
I(u_n)\to c
$$
and
$$
I'(u_n)\to 0,
$$
we need to show that $\{u_n\}$ has a convergent subsequence.

We first show that $\{u_n\}$ is bounded. In fact, we get from the
above two equations that
\begin{equation}\label{3.1}
\frac 12\into |\nabla u_n|^2+u_n^2\,dx-\frac 1{2^*}\into
|u_n|^{2^*}\,dx-\frac 1{q}\intpo |u_n|^q\,dS\to c
\end{equation}
and
\begin{equation}\label{3.2}
\into |\nabla u_n|^2+u_n^2\,dx-\into |u_n|^{2^*}\,dx-\intpo
|u_n|^q\,dS=o(1)\|u_n\|.
\end{equation}
Moreover, the above two equations imply that
$$
(\frac 12-\frac 1q)\|u_n\|^2+(\frac 1q-\frac 1{2^*}) \into
|u_n|^{2^*}\,dx=c+o(1)+o(1)\|u_n\|,
$$
which implies that $\|u_n\|$ is bounded.

Next, we show that $\{u_n\}$ has a convergent subsequence. Since
$\{u_n\}$ is bounded, then we can suppose that, up to a subsequence,
$u_n\rightharpoonup u$ in $H^1(\Omega)$, $u_n\rightharpoonup u$ in
$L^{2^*}(\Omega)$, $u_n\to u $ in $L^2(\Omega)$ and $u_n\to u $ in
$L^q(\partial \Omega)$. Now let $v_n=u_n-u$, then we deduce from
Brezis-Lieb theorem \cite{BL} that
\begin{equation}\label{3.3}
\frac 12\into |\nabla v_n|^2\,dx-\frac 1{2^*}\into
|v_n|^{2^*}\,dx=c-I(u)+o(1)
\end{equation}
and
\begin{equation}\label{3.4}
\into |\nabla v_n|^2\,dx-\into |v_n|^{2^*}\,dx=o(1).
\end{equation}
Hence, we get
\begin{equation}\label{3.5}
\into |\nabla v_n|^2\,dx=\into |v_n|^{2^*}\,dx=N(c-I(u))+o(1).
\end{equation}

Obviously, to prove that $u_n\to u$, it is equivalent to prove that
$v_n\to 0$ in $H^1(\Omega)$. We prove this conclusion by
contradiction. Suppose on the contrary, that is, $v_n\not\to 0$ in
$H^1(\Omega)$, then we will show that this will lead to a
contradiction.

Let $\ep$ be any fixed positive constant, then by Lemma 2.1 in
\cite{W}, there exists $\delta=\delta(\ep)>0$, such that if
$diam(supp\{ \vp\})\leq \delta$, then
$$
\into|\nabla \vp|^2\,dx\geq (2^{-\frac 2N}S-\ep)[\into
|\vp|^{2^*}\,dx]^{\frac{2}{2^*}},
$$
where $diam(D)$ is the diameter of the set $D$. Now let
$\{\vp_\alpha\}_{\alpha=1}^{n_0}$ be a unit partition of $\bar
\Omega$ with $diam(supp\{ \vp_\alpha\})\leq \delta$ for each
$\alpha$. Since $\partial \Omega\in C^1$, then we deduce from Lemma
2.1 in \cite{W} that
$$
\into|\nabla (u\vp_\alpha)|^2\,dx\geq (2^{-\frac 2N}S-\ep)[\into
|u\vp_\alpha|^{2^*}\,dx]^{\frac{2}{2^*}}.
$$
Hence, we have
\begin{eqnarray*}
[N(c-I(u)+o(1))]^{\frac 2{2^*}}&=& (\into (v_n)^{2^*}\,dx)^{\frac 2{2^*}}\\
&=&\|v_n^2\|_{L^{\frac{2^*}2}(\Omega)} \\
   &=& \|\sum_{\alpha=1}^{n_0}\vp_\alpha
   v_n^2\|_{L^{\frac{2^*}2}(\Omega)}\\
   &\leq & \sum_{\alpha=1}^{n_0}\|\vp_\alpha
   v_n^2\|_{L^{\frac{2^*}2}(\Omega)}\\
   &\leq &(2^{-\frac 2N}S-\ep)^{-1}\sum_{\alpha=1}^{n_0}\into |\nabla(v_n\vp_\alpha^{\frac
   12})|^2\,dx\\&\leq& (2^{-\frac 2N}S-\ep)^{-1}[(1+\ep)\into |\nabla v_n|^2\,dx+C\into
   v_n^2\,dx]\\&=&(2^{-\frac 2N}S-\ep)^{-1}(1+\ep)\into |\nabla
   v_n|^2\,dx+o(1)\\&=&(2^{-\frac
   2N}S-\ep)^{-1}(1+\ep)[N(c-I(u)+o(1))].
\end{eqnarray*}
Moreover, we infer from the above inequality that
\begin{equation}\label{3.6}
[c-I(u)+o(1)]\geq \frac 1N[\frac{2^{-\frac 2N}S-\ep}{1+\ep}]^{\frac
N2}.
\end{equation}
We note that $I(u)\geq 0$ since $u$ is a solution of problem
\eqref{1.1}. So if we let $\ep\to 0$ and $n\to \infty$ in equation
\eqref{3.6}, then we get
$$
[c-I(u)]\geq \frac 1{2N}S^{\frac N2},
$$
which contradicts that $c\in (0, \frac 1{2N}S^{\frac N2})$. This
finishes the proof of this lemma.
\end{proof}

We want to use the Mountain Pass theorem in \cite{Ra} to prove the
existence result. By the above local $(PS)$ condition, we need to
prove that the Mountain Pass level of $I$ is indeed below $ \frac
1{2N}S^{\frac N2}$. Since $\Omega$ is bounded, then there exists a
ball $B_R(\bar x)$ containing $\Omega$ and $\partial B_R(\bar x)\cap
\bar \Omega\neq \emptyset$. Suppose that $x_0\in \partial B_R(\bar
x)\cap \bar \Omega$, then we have $2\alpha_i\geq \frac 1R$ for each
$1\leq i\leq N-1$, where $2\alpha_i(i=1,\cdots,N-1)$ are the
principal curvatures of $\partial \Omega$ at $x_0$. We can suppose
$x_0=0$ and $\Omega\subset \{x:x_N>0\}$ without loss of generality,
then the boundary of $ \Omega$ can be represented by
\begin{equation}\label{3.7}
x_N=h(x')= \sum_{i=1}^{N-1}\alpha_i x_i^2+o(|x'|^2),\quad \forall\
x'=(x_1,x_2,\cdots,x_{N-1})\in D(0,\delta)
\end{equation}
for some $\delta>0$, where $ D(0,\delta)=B_\delta
(0)\cap\{x:x_N=0\}$. Set
\begin{equation}\label{3.8}
u_\ep(x)=\frac{[N(N-2)\ep^2]^{\frac{N-2}{4}}}{[\ep^2+|x|^2]^{\frac{N-2}2}}\quad
{\rm in}\ \R,
\end{equation}
then it is well-known that $u_\ep$ solves equation
\begin{equation}\label{3.9}
 \left\{
\begin{array}{ll}
-\Delta u=|u|^{2^*-2}u &\text{in} \; \R,\\
\\ \frac{\partial u}{\partial \nu}=0 &\text{on}\;\partial\R.
\end{array}
\right.
\end{equation}
Moreover, we have $ \intr |\nabla u_\ep|^2\,dx= \intr
u_\ep^{\frac{2N}{N-2}}\,dx=\frac 1{2}S^{\frac N2}$, where $S$ is the
best constant of the Sobolev inequality.

With the above notations, we have the following estimates.
\begin{lemma}\label{t 3.2}
Let $\Omega\subset \mathbb R^N$ with $N\geq 4$ as above, $u_\ep$ be
defined by equation
\eqref{3.8} and $g(x')=\sum_{i=1}^{N-1}\alpha_i x_i^2$, then we have\\
(i) $\into |\nabla u_\ep|^2\,dx=\intr |\nabla
u_\ep|^2\,dx-N^{\frac{N-2}2}(N-2)^{\frac{N+2}2}\ep\intrr
\frac{|x'|^2g(x')}{[1+|x'|^2]^N}\,dx'+o(\ep)$;\\
(ii) $\into u_\ep^{\frac{2N}{N-2}}\,dx=\intr
u_\ep^{\frac{2N}{N-2}}-N^{\frac N2}(N-2)^{\frac N2}\ep\intrr
\frac{g(x')}{[1+|x'|^2]^N}\,dx'+o(\ep)$;\\
(iii) $\intpo u_\ep^q=\intrr
u_\ep(x',0)^{q}\,dx'+o(\ep)=C\ep^{(N-1)-\frac{N-2}2q}+o(\ep^{(N-1)-\frac{N-2}2q})$
for $q\in
(2,2_*)$;\\
(iv) $ \into u_\ep^2\,dx=\left\{
\begin{array}{ll}
O(\ep^2|\ln \ep|) &\text{if}\ \; N=4,\\
\\ O(\ep^2) &\text{if}\ \; N\geq 5.
\end{array}
\right.$
 \end{lemma}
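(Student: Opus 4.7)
The strategy is to exploit the local graph representation of $\partial\Omega$ near $x_0=0$, namely $x_N=h(x')=g(x')+o(|x'|^2)$, and to compare each integral on $\Omega$ or $\partial\Omega$ with the corresponding integral on $\R$ or $\BR\cong\mathbb R^{N-1}$, isolating the first-order correction in $\ep$. In all four parts the portion of $\Omega$ (or $\partial\Omega$) away from $x_0$ contributes only $O(\ep^{N-2})$ (or $O(\ep^{q(N-2)/2})$ on the boundary) since $u_\ep$ is uniformly small there, and this is of lower order than the main terms stated under the standing assumptions $N\geq 4$ and $2<q<2_*$.

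For (i) and (ii), I would write $\into F = \intr F - \int_{\R\setminus\Omega} F$ with $F=|\nabla u_\ep|^2$ and $F=u_\ep^{2^*}$ respectively. Near $x_0$, the straightened set $\R\setminus\Omega$ is the thin slab $\{x'\in D(0,\delta),\ 0<x_N<h(x')\}$, so the correction equals
\[
\int_{D(0,\delta)} h(x')\,F(x',0)\,dx' \ +\ \text{error},
\]
the error being controlled by the smoothness of $F(x',\cdot)$ at $x_N=0$ together with $h(x')=O(|x'|^2)$. Replacing $h$ by $g$ at cost $o(|x'|^2)$ pointwise and rescaling $x'=\ep y'$, the integrals
\[
\int \frac{g(x')|x'|^2}{[\ep^2+|x'|^2]^N}\,dx' \quad\text{and}\quad \int \frac{g(x')}{[\ep^2+|x'|^2]^N}\,dx'
\]
reduce, respectively, to $\ep\,\intrr \tfrac{g(y')|y'|^2}{(1+|y'|^2)^N}\,dy'$ and $\ep\,\intrr \tfrac{g(y')}{(1+|y'|^2)^N}\,dy'$ after collecting powers of $\ep$. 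Inserting the prefactors $(N-2)^2[N(N-2)]^{(N-2)/2}$ from $|\nabla u_\ep|^2$ and $[N(N-2)]^{N/2}$ from $u_\ep^{2^*}$ yields the constants $N^{(N-2)/2}(N-2)^{(N+2)/2}$ and $N^{N/2}(N-2)^{N/2}$ announced in (i) and (ii).

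For (iii), parametrize $\partial\Omega\cap B_\delta(0)$ as $x'\mapsto(x',h(x'))$ with area element $(1+|\nabla h|^2)^{1/2}\,dx'=(1+O(|x'|^2))\,dx'$. Using that $u_\ep$ is radial about the origin, so $\partial_{x_N}u_\ep(x',0)=0$, the replacement of $u_\ep(x',h(x'))$ by $u_\ep(x',0)$ only costs a multiplicative factor $1+O(|x'|^4/[\ep^2+|x'|^2])$; extending the integration domain from $D(0,\delta)$ to $\mathbb R^{N-1}$ contributes a tail of size $O(\ep^{q(N-2)/2})$, which is $o(\ep)$ since $q>2/(N-2)$. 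A direct rescaling $x'=\ep y'$ then gives
\[
\intrr u_\ep(x',0)^q\,dx' = [N(N-2)]^{q(N-2)/4}\,\ep^{(N-1)-\tfrac{N-2}{2}q}\,\intrr (1+|y'|^2)^{-q(N-2)/2}\,dy',
\]
the $y'$-integral being finite because $q>(N-1)/(N-2)$. Finally, (iv) follows from the scaling $x=\ep y$:
\[
\into u_\ep^2\,dx = \ep^2[N(N-2)]^{(N-2)/2}\int_{\Omega/\ep}\frac{dy}{(1+|y|^2)^{N-2}},
\]
the right-hand integral being bounded as $\ep\to 0$ for $N\geq 5$ (since $(1+|y|^2)^{2-N}$ is integrable on $\R$) but growing like $|\ln\ep|$ for $N=4$ owing to its logarithmic divergence at infinity.

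The main obstacle is the clean separation and comparison of three competing error sources in (i)--(iii): the linearization of the $x_N$-integral across the slab of thickness $h$, the replacement of $h$ by its Taylor polynomial $g$, and the tails away from $x_0$. Once each of these is shown to be $o(\ep)$ (and $o(\ep^{(N-1)-q(N-2)/2})$ in (iii)), the identification of the exact prefactors is a routine Beta-type integral computation coupled with careful bookkeeping of powers of $\ep$ under the rescalings.
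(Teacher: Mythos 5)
Your proposal is correct and follows essentially the same approach as the paper: compare $\into$ (resp.\ $\intpo$) with $\intr$ (resp.\ $\intrr$) via the thin slab between the boundary graph $x_N=h(x')$ and the hyperplane $x_N=0$, linearize the $x_N$-integral, replace $h$ by its quadratic Taylor polynomial $g$, and rescale $x=\ep y$ to extract the coefficient of $\ep$, with the competing error sources (linearization, $h\to g$, tails) each shown to be of lower order. The only minor difference is that you give a short direct scaling proof of (iv) where the paper simply cites Brezis--Nirenberg, and you make explicit the use of $\partial_{x_N}u_\ep(x',0)=0$ in controlling the error in (iii), a step the paper leaves implicit.
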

\begin{proof}
A direct calculation implies that
\begin{equation}\label{3.10}
\begin{split}
&\intr |\nabla u_\ep|^2\,dx-\into |\nabla u_\ep|^2\,dx\\&=\intrr
dx'\int_0^{g(x')}N^{\frac{N-2}{2}}(N-2)^{\frac{N+2}2}\ep^{N-2}\frac{|x|^2}{[\ep^2+|x|^2]^N}\,dx_N\\&+
\int_{D(0,\delta)}
dx'\int_{g(x')}^{h(x')}N^{\frac{N-2}{2}}(N-2)^{\frac{N+2}2}\ep^{N-2}\frac{|x|^2}{[\ep^2+|x|^2]^N}\,dx_N+O(\ep^{N-2})
\\&=\intrr
dx'\int_0^{\ep
g(x')}N^{\frac{N-2}{2}}(N-2)^{\frac{N+2}2}\frac{|x|^2}{[1+|x|^2]^N}\,dx_N\\&+
\int_{D(0,\delta)}
dx'\int_{g(x')}^{h(x')}N^{\frac{N-2}{2}}(N-2)^{\frac{N+2}2}\ep^{N-2}\frac{|x|^2}{[\ep^2+|x|^2]^N}\,dx_N+O(\ep^{N-2})\\
&=N^{\frac{N-2}{2}}(N-2)^{\frac{N+2}2}\ep \intrr
\frac{|x'|^2g(x')}{[1+|x'|^2]^N}\,dx'+o(\ep)\\&+ \int_{D(0,\delta)}
dx'\int_{g(x')}^{h(x')}N^{\frac{N-2}{2}}(N-2)^{\frac{N+2}2}\ep^{N-2}\frac{|x|^2}{[\ep^2+|x|^2]^N}\,dx_N.
\end{split}
\end{equation}
 On the other hand, since $h(x')=g(x')+o(|x'|^2)$ in $D(0,\delta)$,
 then
it follows that $\forall \sigma>0$, there exists $C(\sigma)>0$ such
that $|h(x')-g(x')|\leq \sigma|x'|^2+C(\sigma)|x'|^{\frac 52}$ for
$x'\in D(0,\delta)$. Therefore, we have
\begin{eqnarray*}
&&\int_{D(0,\delta)}
dx'\int_{g(x')}^{h(x')}N^{\frac{N-2}{2}}(N-2)^{\frac{N+2}2}\ep^{N-2}\frac{|x|^2}{[\ep^2+|x|^2]^N}\,dx_N\\&\leq&
C\ep^{N-2}\int_{D(0,\delta)}\,dx'\int_{g(x')}^{h(x')}\frac{|x|^2}{[\ep^2+|x|^2]^N}\,dx_N\\&\leq&
C\ep^{N-2}\int_{D(0,\delta)}\frac{|h(x')-g(x')|}{[\ep^2+|x'|^2]^{N-1}}\,dx'\\&\leq&
C\ep^{N-2}\int_{D(0,\delta)}\frac{\sigma |x'|^2+C(\sigma)|x'|^{\frac
52}}{[\ep^2+|x'|^2]^{N-1}}dx'\\&\leq
&C\ep(\sigma+C(\sigma)\ep^{\frac 12})
\end{eqnarray*}
since $N\geq 4$. Moreover, since $\sigma$ is arbitrary, we have
\begin{equation}\label{3.11}
\int_{D(0,\delta)}
dx'\int_{g(x')}^{h(x')}N^{\frac{N-2}{2}}(N-2)^{\frac{N+2}2}\ep^{N-2}\frac{|x|^2}{[\ep^2+|x|^2]^N}\,dx_N=o(\ep).
\end{equation}
Finally, we deduce from equation \eqref{3.10} and equation
\eqref{3.11} that
$$
\into |\nabla u_\ep|^2\,dx=\intr |\nabla
u_\ep|^2\,dx-N^{\frac{N-2}2}(N-2)^{\frac{N+2}2}\ep\intrr
\frac{|x'|^2g(x')}{[1+|x'|^2]^N}\,dx'+o(\ep),
$$
which proves (i).

For (ii), we have
\begin{equation}\label{3.12}
\begin{split}
&\intr u_\ep^{2^*}\,dx-\into u_\ep^{2^*}\,dx\\&=\intrr
dx'\int_0^{g(x')}N^{\frac N2}(N-2)^{\frac
N2}\ep^N[\ep^2+|x|^2]^{-N}\,dx_N\\&+\int_{D(0,\delta)}
dx'\int_{g(x')}^{h(x')}N^{\frac N2}(N-2)^{\frac
N2}\ep^N[\ep^2+|x|^2]^{-N}\,dx_N+O(\ep^{N}).
\end{split}
\end{equation}
A direct calculation implies
\begin{equation}\label{3.13}
\begin{split}
&\int_{D(0,\delta)} dx'\int_0^{g(x')}N^{\frac N2}(N-2)^{\frac
N2}\ep^N[\ep^2+|x|^2]^{-N}\,dx_N\\&= \intrr
dx'\int_0^{g(x')}N^{\frac N2}(N-2)^{\frac
N2}\ep^N[\ep^2+|x|^2]^{-N}\,dx_N+O(\ep^N)
\\&=N^{\frac N2}(N-2)^{\frac
N2}\intrr \,dx'\int_0^{\ep g(x')}\frac
1{[1+|x|^2]^N}\,dx_N+o(\ep)\\&=N^{\frac N2}(N-2)^{\frac N2}\ep\intrr
\frac{g(x')}{[1+|x'|^2]^N}\,dx'+o(\ep).
\end{split}
\end{equation}
Moreover, similar to equation \eqref{3.11}, we have
\begin{equation}\label{3.14}
\int_{D(0,\delta)} dx'\int_{g(x')}^{h(x')}N^{\frac N2}(N-2)^{\frac
N2}\ep^N[\ep^2+|x|^2]^{-N}\,dx_N=o(\ep).
\end{equation}
Hence, it follows from equations \eqref{3.12},\eqref{3.13} and
\eqref{3.14} that
$$
\into u_\ep^{\frac{2N}{N-2}}\,dx=\intr
u_\ep^{\frac{2N}{N-2}}\,dx-N^{\frac N2}(N-2)^{\frac N2}\ep\intrr
\frac{g(x')}{[1+|x'|^2]^N}\,dx'+o(\ep),
$$
which proves (ii).

As for (iii), we note that since $q\in (2,2_*)$, so we get that
\begin{equation}\label{3.15}
\begin{split}
\intrr u_\ep(x',0)^q\,dx'&=C\ep^{\frac{N-2}2q}\intrr \frac
1{[\ep^2+|x'|^2]^{\frac
{N-2}2q}}\,dx'\\&=C\ep^{N-1-\frac{N-2}2q}\intrr \frac
1{[1+|x'|^2]^{\frac {N-2}2q}}\,dx'.
\end{split}
\end{equation}
Moreover, since $q\in (2,\frac{2(N-1)}{N-2})$, we have
$N-1-\frac{N-2}2q\in (0,1)$, hence we get
\begin{equation}\label{3.16}
\intpo u_\ep^q\,dS=
C\ep^{N-1-\frac{N-2}2q}+o(\ep^{(N-1)-\frac{N-2}2q}),
\end{equation}
which proves (iii).

(iv) is proved in \cite{BN}.

\end{proof}
With the above preparations, we can prove Theorem \ref{t 1.2} with
$N\geq 4$ now.
\begin{proof}[Proof of Theorem \ref{t 1.2} with $N\geq 4$:]
We use the Mountain Pass theorem to prove our result. First, we
infer from Sobolev imbedding theorem, Sobolev trace inequality that
$$
I(u)\geq \frac 12\|u\|^2-C\|u\|^{2^*}-C\|u\|^{q},
$$
hence, there exist $\rho,\alpha>0$ such that
$$
I(u)\geq \alpha>0
$$
for $u\in \partial B_\rho$. Moreover, let $u_0\neq 0$ fixed, then it
is easy to check that
$$
I(tu_0)\to -\infty
$$
as $t\to \infty$. So there exists $t_0>0$ such that
$\|t_0u_0\|>\rho$ and
$$
I(t_0u_0)<0.
$$
Define
$$
c=\inf_{\gamma\in \Gamma}\sup_{t\in [0,1]}I(\gamma(t))
$$
with
$$
\Gamma=\{\gamma\in C([0,1], H^1(\Omega)): \gamma(0)=0,
\gamma(1)=t_0u_0\},
$$
then $c$ is a well-defined positive constant. In order to show that
$c$ is indeed a critical value for $I$, we only need to show that
$$
c<\frac 1{2N}S^{\frac N2}.
$$
For this purpose, we note that
$$
I(t u_\ep)=\frac {t^2}2\|u_\ep\|^2-\frac{t^{2^*}}{2^*}\into
u_\ep^{2^*}\,dx-\frac {t^q}{q}\intpo u_\ep^q\,dS.
$$
Hence there exists $t_\ep>0 $ such that $I(t_\ep u_\ep)$ attains its
maximum at $t_\ep$. Moreover, it follows from Lemma \ref{t 3.2} that
$t_\ep\to 1$ as $\ep\to 0$. Hence, for $N\geq 5$, we get
\begin{eqnarray*}
I(t_\ep u_\ep)&\leq &\frac {t_\ep^2}2\intr |\nabla
u_\ep|^2\,dx-\frac{t_\ep^{2^*}}{2^*}\intr
|u_\ep|^{2^*}\,dx-\frac{t_\ep^2}{2}
N^{\frac{N-2}2}(N-2)^{\frac{N+2}2}\ep\intrr
\frac{|x'|^2g(x')}{[1+|x'|^2]^N}\,dx'\\&+&O(\ep^2)-C\ep^{N-1-\frac{N-2}2q}+C\ep\intrr
\frac{g(x')}{[1+|x'|^2]^N}\,dx'.
\end{eqnarray*}
On the other hand, since the function
$$
f(t)=\frac {t^2}2\intr |\nabla u_\ep|^2\,dx-\frac{t^{2^*}}{2^*}\intr
|u_\ep|^{2^*}\,dx
$$
attains its maximum at $t=1$ and the maximum is $\frac 1{2N}S^{\frac
N2}$, hence we have
\begin{equation}\label{3.17}
\begin{split}
I(t_\ep u_\ep)&\leq \frac 1{2N}S^{\frac N2}-C\ep\intrr
\frac{|x'|^2g(x')}{[1+|x'|^2]^N}\,dx'\\&+O(\ep^2)-C\ep^{N-1-\frac{N-2}2q}+C\ep\intrr
\frac{g(x')}{[1+|x'|^2]^N}\,dx'.
\end{split}
\end{equation}
Since $q\in (2,\frac{2(N-1)}{N-2})$, we get $N-1-\frac{N-2}2q\in
(0,1)$. Finally, we infer from the above inequality that
$$
I(t_\ep u_\ep)< \frac 1{2N}S^{\frac N2}
$$
for $\ep$ small enough.

The case $N=4$ is similar, we only need to replace $O(\ep^2)$ by
$O(\ep^2|\ln \ep|)$ in equation \eqref{3.17}. The rest of the proof
is the same as $N\geq 5$.

\end{proof}

Finally, we study the case $N=3$. In this case, we suppose the
principal curvatures of $\partial \Omega$ at $x_0 \in\partial
\Omega$ belong to interval $(2a,2A)$ for some $0<a\leq A<\infty$.
Then we have $a|x'|^2\leq h(x')\leq A |x'|^2$ for $x'\in
D(0,\delta)$. With these notations, we have the following estimates.
\begin{lemma}\label{t 3.3}
Suppose $N=3$ and $x_0$ as above, then we have the following estimates.\\
(i) $\into |\nabla u_\ep|^2\,dx\leq \intr |\nabla
u_\ep|^2\,dx-C\ep|\ln
\ep|+O(\ep)$;\\
(ii) $\into u_\ep^6\,dx\geq \intr u_\ep^6\,dx-C\ep.$\\
(iii) $\intpo u_\ep^q\,dS\geq C\ep^{2-\frac q2}$ for $q\in (2,4)$.\\
(iv) $\into u_\ep^2\,dx'=O(\ep)$.
\end{lemma}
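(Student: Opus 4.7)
The strategy I would follow is the same as in Lemma \ref{t 3.2}: after extending $u_\ep$ from $\R$, I write $\into f(u_\ep)=\intr f(u_\ep)-\int_{\R\setminus\Omega}f(u_\ep)$ and estimate the removed part. Near $x_0=0$ the set $\R\setminus\Omega$ contains the ``sliver''
\[
\Sigma_\ep:=\{(x',x_N):x'\in D(0,\delta),\ 0<x_N<h(x')\},
\]
and the curvature hypothesis gives $a|x'|^2\le h(x')\le A|x'|^2$. Outside $B_\delta(0)$, $|\nabla u_\ep|^2$ and $u_\ep^6$ are dominated pointwise by $O(\ep/|x|^4)$ and $O(\ep^3/|x|^6)$ respectively, so their integrals over $(\R\setminus\Omega)\setminus B_\delta(0)$ are at worst $O(\ep)$. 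It therefore suffices in each case to analyze the integral over $\Sigma_\ep$, which after the scaling $x'=\ep y'$ reduces to an explicit dimensionless integral over a ball of radius $\delta/\ep$ in $\mathbb R^2$.

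For (i), I would use $|\nabla u_\ep|^2=3^{1/2}\ep\,|x|^2(\ep^2+|x|^2)^{-3}$ together with $|x|^2\asymp|x'|^2$ on $\Sigma_\ep$. Fubini and the lower bound $h(x')\ge a|x'|^2$ then give
\[
\int_{\Sigma_\ep}|\nabla u_\ep|^2\,dx\;\ge\;C\ep\int_{D(0,\delta)}\frac{|x'|^4}{(\ep^2+|x'|^2)^3}\,dx'\;-\;O(\ep).
\]
Rescaling $x'=\ep y'$ converts the right-hand integral into $\ep\cdot 2\pi\int_0^{\delta/\ep}s^5(1+s^2)^{-3}\,ds$, which grows like $\ep|\ln\ep|$ because the integrand has a $1/s$ tail at infinity in two dimensions. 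This is the low-dimensional obstruction: for $N\ge 4$ the analogous integral over $\mathbb R^{N-1}$ is convergent (as used in Lemma \ref{t 3.2}(i)), but for $N=3$ it diverges logarithmically, and this is exactly the mechanism that produces the $|\ln\ep|$ factor in (i).

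The remaining estimates are comparatively direct. For (ii) I upper-bound the sliver contribution via $h(x')\le A|x'|^2$; after the same rescaling the integrand becomes $s^3(1+s^2)^{-3}$, which is integrable at infinity, so the total contribution is $O(\ep)$. For (iii) I would parametrize $\partial\Omega\cap B_\delta(0)$ by $(x',h(x'))$, use that the surface element $\sqrt{1+|\nabla h|^2}$ is bounded and that $\ep^2+|x'|^2+h(x')^2\le (1+A^2\delta^2)(\ep^2+|x'|^2)$ on $D(0,\delta)$, and then scale to obtain
\[
\intpo u_\ep^q\,dS\;\ge\;C\ep^{2-q/2}\int_0^{\delta/\ep}\frac{s}{(1+s^2)^{q/2}}\,ds,
\]
where the scaled integral converges to a positive constant as $\ep\to 0$ because $q>2$. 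Finally for (iv), I simply integrate $u_\ep^2=\sqrt{3}\,\ep(\ep^2+|x|^2)^{-1}$ over a ball $B_R(0)\supset\Omega$ in polar coordinates; a one-line computation yields $4\pi\ep(R-\ep\arctan(R/\ep))=O(\ep)$.

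The main obstacle will be part (i): I must check that every correction term contributes only $O(\ep)$ so as not to spoil the $\ep|\ln\ep|$ leading term. Specifically, I need to verify that the discrepancy $|x|^2-|x'|^2=x_N^2$ on $\Sigma_\ep$ (at most of order $|x'|^4$), the integral over $(\R\setminus\Omega)\setminus B_\delta(0)$, and the slack between the lower and upper bounds on $h$ all produce errors of order at most $\ep$. The hypothesis that the principal curvatures are bounded below by $2a>0$ is used precisely to guarantee the sharp lower bound $h(x')\ge a|x'|^2$ that drives the logarithm in (i); without it one could prove only a one-sided $O(\ep)$ bound analogous to (ii), which would be insufficient for the Mountain Pass comparison in dimension three.
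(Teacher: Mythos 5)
Your proposal is correct and follows essentially the same route as the paper: decompose each integral as the half-space integral minus the sliver $\{0<x_N<h(x')\}$ contribution, use the two-sided curvature bounds $a|x'|^2\le h(x')\le A|x'|^2$ to lower-bound (for (i)) or upper-bound (for (ii)) the sliver term, and rescale $x'=\ep y'$ to extract the asymptotics, with the logarithm in (i) coming from the borderline $1/s$ tail in dimension $N-1=2$ exactly as you identify. The only differences are cosmetic: the paper invokes Lemma~3.2 for (iii) and \cite{BN} for (iv) where you compute directly, and the harmless extra $-O(\ep)$ in your sliver lower bound is not needed.
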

\begin{proof}
As for (i), we infer from the above setting that
\begin{equation}\label{3.18}
\begin{split}
&\into |\nabla u_\ep|^2\,dx=\intr |\nabla
u_\ep|^2\,dx-\int_{D(0,\delta)}dx'\int_0^{h(x')}|\nabla
u_\ep|^2\,dx_N+O(\ep)\\ &\leq \intr |\nabla
u_\ep|^2\,dx-\int_{D(0,\delta)}dx'\int_0^{a|x'|^2}|\nabla
u_\ep|^2\,dx_N+O(\ep)\\&\leq \intr |\nabla u_\ep|^2\,dx-
C\ep\int_{D(0,\delta)}\frac{a|x'|^4}{(\ep^2+|x'|^2)^N}\,dx'+O(\ep)\\&\leq
\intr |\nabla u_\ep|^2\,dx-C\ep|\ln \ep|+O(\ep).
\end{split}
\end{equation}
As for (ii), we have
\begin{equation}\label{3.19}
\begin{split}
\into u_\ep^6\,dx&=\intr
u_\ep^6\,dx-\int_{D(0,\delta)}dx'\int_0^{h(x')}u_\ep^6\,dx_N+O(\ep^{3})\\&\geq
\intr
u_\ep^6\,dx-\int_{D(0,\delta)}dx'\int_0^{A|x'|^2}u_\ep^6\,dx_N+O(\ep^{3})\\&
\geq \intr u_\ep^6\,dx-C\ep\intrr
\frac{|x'|^2}{[1+|x'|^2]^3}\,dx'+O(\ep^{3})\\&\geq \intr
u_\ep^6\,dx-C\ep.
\end{split}
\end{equation}

(iii) is the same as Lemma \ref{t 3.2} and (iv) can be found in
\cite{BN}. This finishes the proof of this lemma.

\end{proof}
\begin{proof}[Proof of Theorem \ref{t 1.2} with $N=3$:]
By the same reason as for $N\geq 4$, we conclude that the functional
$I$ possesses the Mountain Pass structure, so we only need to show
that the Mountain Pass level is below $\frac 1{2N}S^{\frac N 2}$.
For this purpose, we let $u_\ep, t_\ep$ the same as the case $N\geq
4$, then we have
\begin{equation}\label{3.20}
\begin{split}
I(t_\ep u_\ep)&\leq \frac 1{2N}S^{\frac N2}-C\ep|\ln
\ep|+C\ep-C\ep^{2-\frac q2}.
\end{split}
\end{equation}
Since $q\in (2,4)$, we deduce that $2-\frac q2\in (0,1)$. Insert
this into the above inequality, then we conclude that
$$
I(t_\ep u_\ep)< \frac 1{2N}S^{\frac N2}
$$
for $\ep>0$ small enough. This finishes the proof of Theorem \ref{t
1.2} for $ N=3$.
\end{proof}

\section{Proof of Theorem \ref{1.3}}
In this section, we study the existence result of another critical
equation, i.e., the case $2<r<\frac{2N}{N-2}$ and
$q=\frac{2(N-1)}{N-2}$. We first introduce the respective "limit
equation" corresponding to equation \eqref{1.1} in this case.
Consider the following equation
\begin{equation}\label{4.1}
 \left\{
\begin{array}{ll}
-\Delta u=0&\text{in} \; \R,\\
\\ \frac{\partial u}{\partial \nu}=|u|^{2_*-2}u  &\text{on}\;\partial\R,
\end{array}
\right.
\end{equation}
its ground state solutions are closely related to the following
Sobolev trace inequality
\begin{equation}\label{4.2}
S_T=\inf_{u\in D^{1,2}(\R):u\neq 0}\frac{\intr |\nabla
u|^2\,dx}{(\intrr |u(x',0)|^{2_*}\,dx')^{\frac 2{2_*}}}.
\end{equation}
That is, the minimizers of $S_T$ multiplied by a constant are the
solutions of equation \eqref{4.1}. Moreover, the ground state
solutions of equation \eqref{4.1} have the following form
\begin{equation}\label{4.3}
u_\ep(x)=\frac{(N-2)^{\frac{N-2}2}\ep^{\frac
{N-2}2}}{[(\ep+x_N)^2+|x'|^2]^{\frac{N-2}2}}.
\end{equation}

With the above preparations, we can study the existence result of
equation \eqref{1.1} in this case now. Obviously, the solutions of
problem \eqref{1.1} in this situation correspond to the critical
points of the functional
\begin{equation}\label{4.4}
I(u)=\frac 12\into |\nabla u|^2+u^2\,dx-\frac 1r\into
|u|^r\,dx-\frac 1{2_*}\intpo |u|^{2_*}\,dS
\end{equation}
defined on $H^1(\Omega)$. First, we have the following compactness
lemma.
\begin{lemma}\label{t 4.1}
Under the assumptions of Theorem \ref{t 1.3}, the functional $I$
satisfies the $(PS)_c$ condition for $c\in (0,\frac
1{2(N-1)}S_T^{N-1})$.
\end{lemma}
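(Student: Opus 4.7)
The plan is to mirror the compactness argument of Lemma~\ref{t 3.1}, swapping the roles of the interior critical exponent $2^*$ and the boundary critical trace exponent $2_*$. Starting from a $(PS)_c$ sequence $\{u_n\}$, the first step is to show boundedness: combining $I(u_n)=c+o(1)$ with $I'(u_n)[u_n]=o(1)\|u_n\|$ and subtracting $\frac{1}{\theta}I'(u_n)[u_n]$ from $I(u_n)$ with $\theta:=\min(r,2_*)>2$, the coefficient $(\frac12-\frac{1}{\theta})>0$ in front of $\|u_n\|^2$ together with nonnegative coefficients on the two nonlinear terms forces $\|u_n\|$ to stay bounded.

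Next, passing to a subsequence with $u_n\rightharpoonup u$ in $H^1(\Omega)$, I would exploit the compactness of $H^1(\Omega)\hookrightarrow L^r(\Omega)$ (subcritical $r$) and $H^1(\Omega)\hookrightarrow L^2(\Omega)$, together with a.e.\ convergence on $\partial\Omega$, to pass to the limit in $I'(u_n)=o(1)$ and conclude that $u$ is a weak solution of \eqref{1.1}. Hence $I(u)=(\frac12-\frac{1}{r})\into|u|^r\,dx+(\frac12-\frac{1}{2_*})\intpo|u|^{2_*}\,dS\ge 0$. Setting $v_n=u_n-u$ and applying the Brezis--Lieb lemma \cite{BL} to $\|\nabla\cdot\|_{L^2(\Omega)}^2$ and to $\intpo|\cdot|^{2_*}\,dS$, while using that $u$ solves the equation, the conditions $I'(u_n)[u_n]=o(1)$ and $I(u_n)\to c$ reduce to
$$\into|\nabla v_n|^2\,dx-\intpo|v_n|^{2_*}\,dS=o(1), \qquad \tfrac12\into|\nabla v_n|^2\,dx-\tfrac{1}{2_*}\intpo|v_n|^{2_*}\,dS\to c-I(u),$$
which, using the identity $\frac12-\frac{1}{2_*}=\frac{1}{2(N-1)}$, give $\into|\nabla v_n|^2\,dx\to b$ and $\intpo|v_n|^{2_*}\,dS\to b$ with $b=2(N-1)(c-I(u))$.

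The hard part will be to derive a contradiction when $b>0$, and for this I would establish the trace analogue of Lemma~2.1 of \cite{W}: for each $\ep>0$ there is $\delta(\ep)>0$ so that any $\vp\in H^1(\Omega)$ with $\mathrm{diam}(\mathrm{supp}\,\vp)\le\delta$ satisfies
$$\into|\nabla\vp|^2\,dx\ge(S_T-\ep)\Bigl(\intpo|\vp|^{2_*}\,dS\Bigr)^{2/2_*}.$$
This should follow by flattening $\partial\Omega$ in a local chart centered at a boundary point and invoking that $S_T$ is the sharp constant on the half-space, so a small $C^1$ perturbation of the metric distorts the constant only by order $\ep$. Fixing a unit partition $\{\vp_\al\}_{\al=1}^{n_0}$ of $\bar\Omega$ with each $\mathrm{diam}(\mathrm{supp}\,\vp_\al)\le\delta$, then applying the triangle inequality in $L^{2_*/2}(\partial\Omega)$ to $v_n^2=\sum_\al\vp_\al v_n^2$ and the pointwise estimate $|\nabla(v_n\vp_\al^{1/2})|^2\le(1+\ep)|\nabla v_n|^2\vp_\al+Cv_n^2$, together with $\|v_n\|_{L^2(\Omega)}\to 0$, should yield
$$b^{2/2_*}\le(S_T-\ep)^{-1}(1+\ep)\,b+o(1).$$
Because $\frac{2}{2_*}-1=-\frac{1}{N-1}$, sending $\ep\to 0$ and $n\to\infty$ gives $b\ge S_T^{N-1}$, whence $c\ge I(u)+\frac{1}{2(N-1)}S_T^{N-1}\ge\frac{1}{2(N-1)}S_T^{N-1}$, contradicting the assumption $c\in\bigl(0,\frac{1}{2(N-1)}S_T^{N-1}\bigr)$. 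Therefore $b=0$, which together with $\|v_n\|_{L^2}\to 0$ gives $u_n\to u$ in $H^1(\Omega)$.
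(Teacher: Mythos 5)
Your argument is correct and reaches the same conclusion, but the final step is carried out differently from the paper. After reducing to the dichotomy $b=\into|\nabla v_n|^2\to 2(N-1)(c-I(u))=\intpo|v_n|^{2_*}\,dS$, the paper simply writes $\bigl(\intpo|v_n|^{2_*}\,dS\bigr)^{2/2_*}\le\frac1{S_T}\into|\nabla v_n|^2\,dx$ and divides out, which amounts to invoking (implicitly, with the $L^2$-error term absorbed into the $o(1)$ since $\|v_n\|_{L^2(\Omega)}\to 0$) the sharp trace inequality on the bounded domain $\Omega$ with leading constant exactly $1/S_T$ — a nontrivial external result of Li--Zhu/Escobar type, not proved in the paper. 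You instead build a trace analogue of Lemma~2.1 of \cite{W} and run the same partition-of-unity machinery the paper used for the interior-critical case in Lemma~\ref{t 3.1}, which makes the provenance of the constant explicit and keeps the argument self-contained; the key observation that justifies keeping $S_T$ (rather than a half-space correction factor, as in the $2^{-2/N}S$ of Lemma~\ref{t 3.1}) is that the trace integral only sees patches touching $\partial\Omega$, whose blow-up limit is already the half-space for which $S_T$ is by definition sharp. Your estimate $|\nabla(v_n\vp_\alpha^{1/2})|^2\le(1+\ep)\vp_\alpha|\nabla v_n|^2+Cv_n^2$ together with $\|v_n\|_{L^2}\to0$ then gives $b^{2/2_*}\le(S_T-\ep)^{-1}(1+\ep)b+o(1)$, and since $2/2_*-1=-1/(N-1)$, letting $\ep\to0$ yields $b\ge S_T^{N-1}$ as required. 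In short: same conclusion, more explicit and arguably more honest route to the trace constant; the rest (boundedness via $\theta=\min(r,2_*)$, Brezis--Lieb splitting, $I(u)\ge0$) matches the paper.
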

\begin{proof}
Let $\{u_n\}\subset H^1(\Omega)$ be a $(PS)_c$ sequence for $I$ with
$c\in (0,\frac 1{2(N-1)}S_T^{N-1})$, that is,
$$
I(u_n)\to c
$$
and
$$
I'(u_n)\to 0,
$$
we will prove that $\{u_n\}$ has a convergent subsequence.

We first show that $\{u_n\}$ is bounded. For this purpose, we infer
from the above two equalities that
\begin{equation}\label{4.5}
\frac 12\into |\nabla u_n|^2+u_n^2\,dx-\frac 1{r}\into
|u_n|^{r}\,dx-\frac 1{2_*}\intpo |u_n|^{2_*}\,dS\to c
\end{equation}
and
\begin{equation}\label{4.6}
\into |\nabla u_n|^2+u_n^2\,dx-\into |u_n|^{r}\,dx-\intpo
|u_n|^{2_*}\,dS=o(1)\|u_n\|.
\end{equation}
If $r\leq \frac{2(N-1)}{N-2}$, then we deduce from the above two
equations that
$$
(\frac 12-\frac 1r)\|u_n\|^2+(\frac 1r-\frac 1{2_*}) \intpo
|u_n|^{2_*}\,dS=c+o(1)+o(1)\|u_n\|,
$$
which implies that $\|u_n\|$ is bounded. Similarly, if $r>2_*$, then
we can still get from equation \eqref{4.5} and equation \eqref{4.6}
that
$$
(\frac 12-\frac 1{2_*})\|u_n\|^2+(\frac 1{2_*}-\frac 1r) \into
|u_n|^{r}\,dx=c+o(1)+o(1)\|u_n\|,
$$
which also implies that $\|u_n\|$ is bounded. So in both cases, we
conclude that $\|u_n\|$ is bounded.

Next, we show that $\{u_n\}$ has a convergent subsequence. Since
$\{u_n\}$ is bounded, we can suppose that, up to a subsequence,
$u_n\rightharpoonup u$ in $H^1(\Omega)$, $u_n\to u$ in
$L^{r}(\Omega)$, $u_n\to u $ in $L^2(\Omega)$ and
$u_n\rightharpoonup u $ in $L^{2_*}(\partial \Omega)$. Now let
$v_n=u_n-u$, then we deduce from Brezis-Lieb theorem \cite{BL} that
\begin{equation}\label{4.7}
\frac 12\into |\nabla v_n|^2\,dx-\frac 1{2_*}\intpo
|v_n|^{2_*}\,dS=c-I(u)+o(1)
\end{equation}
and
\begin{equation}\label{4.8}
\into |\nabla v_n|^2\,dx-\intpo |v_n|^{2_*}\,dS=o(1).
\end{equation}
Hence, we get
\begin{equation}\label{4.9}
\into |\nabla v_n|^2\,dx=\intpo |v_n|^{2_*}\,dS=2(N-1)(c-I(u))+o(1).
\end{equation}
As before, to prove this lemma, it is equivalent to prove $v_n\to 0$
in $H^1(\Omega)$. We prove this conclusion by contradiction. Suppose
on the contrary, that is, $v_n\not\to 0$ in $H^1(\Omega)$, then we
will show that this will lead to a contradiction. In fact, we have
\begin{eqnarray*}
[2(N-1)(c-I(u)+o(1))]^{\frac 2{2_*}}&=& (\intpo (v_n)^{2_*}\,dS)^{\frac 2{2_*}}\\
&\leq & \frac 1{S_T}\into |\nabla v_n|^2\,dx\\&=&\frac
1{S_T}[2(N-1)(c-I(u)+o(1))].
\end{eqnarray*}
It follows from the above inequality that
$$
2(N-1)(c-I(u)+o(1))\geq S_T^{N-1}.
$$
Let $n\to \infty$ in the above inequality and note that $I(u)\geq
0$, then we get that
$$
c\geq \frac 1{2(N-1)}S^{N-1},
$$
which contradicts the choice of $c$. This finishes the proof of this
lemma.
\end{proof}

In order to prove the Mountain Pass level of functional $I$ is
indeed below $\frac 1{2(N-1)}S^{N-1}$, we need to estimate the
Mountain Pass value carefully. As before, we assume that the ball
$B_R(\bar x)$ contains $\Omega$ and $\partial B_R(\bar x)\cap \bar
\Omega\neq \emptyset$. Suppose that $x_0\in
\partial B_R(\bar x)\cap \bar \Omega$, then we have $2\alpha_i\geq
\frac 1R$ for each $1\leq i\leq N-1$, where
$2\alpha_i(i=1,\cdots,N-1)$ are the principal curvatures of
$\partial \Omega$ at $x_0$. We can suppose $x_0=0$ and
$\Omega\subset \{x:x_N>0\}$ without loss of generality, then the
boundary of $ \Omega$ can be represented by
\begin{equation}\label{4.10}
x_N=h(x')= \sum_{i=1}^{N-1}\alpha_i x_i^2+o(|x'|^2),\quad \forall\
x'=(x_1,x_2,\cdots,x_{N-1})\in D(0,\delta)
\end{equation}
for some $\delta>0$, where we denote
$D(0,\delta)=B_\delta(0)\cap\{x_N=0\}$. In the following, we denote
$g(x')=\sum_{i=1}^{N-1}\alpha_i x_i^2$ for simplicity.
\begin{lemma}\label{t 4.2}
Let $N\geq 4$ and $u_\ep$ be defined in equation \eqref{4.3}, then
we have the
following estimates.\\
(i) $\into |\nabla u_\ep|^2\,dx=\intr |\nabla
u_\ep|^2\,dx-(N-2)^N\ep\intrr
\frac{g(x')}{[1+|x'|^2]^{N-1}}\,dx'+o(\ep)$;\\
(ii) $\intpo u_\ep^{2_*}\,dS=\intrr
u_\ep(x',0)^{2_*}\,dx'-2(N-1)(N-2)^{N-1}\ep\intrr
\frac{g(x')}{[1+|x'|^2]^N}\,dx'+o(\ep)$;\\
(iii) $ \into u_\ep^2\,dx=\left\{
\begin{array}{ll}
O(\ep^2|\ln \ep|) &\text{if}\ \; N=4,\\
\\ O(\ep^2) &\text{if}\ \; N\geq5.
\end{array}
\right.$
\end{lemma}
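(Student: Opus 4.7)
My plan is to reduce each of the three estimates to a localisation argument near the boundary point $x_0=0$ followed by the rescaling $x=\ep y$, in the same spirit as the proof of Lemma \ref{t 3.2}. The new feature here is that $u_\ep$ concentrates \emph{on} the boundary $\BR$ rather than in the interior, so the leading corrections are linear in $\ep$ and their coefficients are governed by the principal curvatures via $g(x')=\sum_{i=1}^{N-1}\al_i x_i^2$. In each part I would first write the difference between the integral over $\Omega$ (resp.\ $\partial\Omega$) and the corresponding one over $\R$ (resp.\ $\BR$) as a local piece near $x_0$ plus a far-field remainder; the far-field pieces are $O(\ep^{N-2})$ in (i) and $O(\ep^{N-1})$ in (ii), both $o(\ep)$ for $N\ge 4$, so they can be discarded.

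For (i), I would compute $|\nabla u_\ep|^2=(N-2)^N\ep^{N-2}[(\ep+x_N)^2+|x'|^2]^{1-N}$ explicitly, and then use
$$
\intr|\nabla u_\ep|^2\,dx-\into|\nabla u_\ep|^2\,dx=\int_{D(0,\delta)}dx'\int_0^{h(x')}|\nabla u_\ep|^2\,dx_N+O(\ep^{N-2}).
$$
The estimate of $|h-g|$ used to obtain \eqref{3.11} shows that replacing $h$ by $g$ in the upper limit costs only $o(\ep)$. After the rescaling $x=\ep y$, the remaining double integral becomes
$$
(N-2)^N\intrr dy'\int_0^{\ep g(y')}[(1+t)^2+|y'|^2]^{1-N}\,dt,
$$
and since $\ep g(y')\to 0$ the inner integral can be replaced by $\ep g(y')[1+|y'|^2]^{1-N}$, yielding the claimed constant $(N-2)^N$.

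For (ii), the plan is to parameterise $\partial\Omega$ near $x_0$ as the graph $(x',h(x'))$ with area element $\sqrt{1+|\nabla h|^2}\,dx'$, and, setting $A=\ep^2+|x'|^2$ and $B=2\ep h+h^2$, to expand
$$
[(\ep+h)^2+|x'|^2]^{-(N-1)}\sqrt{1+|\nabla h|^2}=A^{-(N-1)}\Bigl(1-(N-1)\tfrac{B}{A}+\cdots\Bigr)\Bigl(1+\tfrac{1}{2}|\nabla h|^2+\cdots\Bigr).
$$
The only term of order $\ep$ is $-2(N-1)\ep h/A^N$; multiplied by the prefactor $(N-2)^{N-1}\ep^{N-1}$ of $u_\ep^{2_*}$, with $h$ replaced by $g$ and $x'=\ep y'$ rescaled, this produces precisely $-2(N-1)(N-2)^{N-1}\ep\intrr g(y')/[1+|y'|^2]^N\,dy'$. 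Every other contribution ($h^2/A^N$, $|\nabla h|^2/A^{N-1}$, $(B/A)^2$ and cross terms) becomes $O(\ep^2)$ after the same rescaling, provided the auxiliary integrals $\intrr|y'|^2/[1+|y'|^2]^{N-1}\,dy'$ and $\intrr|y'|^4/[1+|y'|^2]^N\,dy'$ converge, which is exactly the condition $N\ge 4$.

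For (iii), I would appeal to the classical Brezis--Nirenberg estimate \cite{BN}: substituting $x=\ep y$ reduces $\into u_\ep^2\,dx$ to $\ep^2$ times the integral of $[(1+y_N)^2+|y'|^2]^{-(N-2)}$ over $\Omega/\ep$, which is integrable on $\R$ for $N\ge 5$ (giving $O(\ep^2)$) and only logarithmically divergent for $N=4$ (producing the $|\ln\ep|$ factor). The hardest step will be (ii), where one must organise the Taylor expansion so that exactly one term survives at order $\ep$ and every other term is genuinely $o(\ep)$; this is both where the constraint $N\ge 4$ enters and where the precise coefficient $-2(N-1)(N-2)^{N-1}$ has to be kept under control.
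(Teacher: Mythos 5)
Your proposal is correct and, in overall structure (localisation near $x_0$, replacement of $h$ by $g$ at the cost of an $o(\ep)$ term via the $\sigma$-$C(\sigma)$ estimate, rescaling $x=\ep y$, far-field discard), follows the same strategy as the paper; parts (i) and (iii) are essentially identical to the paper's proof. The one genuine difference is in (ii): the paper organises the boundary computation through a mean-value argument, defining $f(r)=u_\ep(x',rg(x'))^{2_*}\sqrt{1+r^2|\nabla g(x')|^2}$, writing $\intpo u_\ep^{2_*}\,dS-\intrr u_\ep(x',0)^{2_*}\,dx'=\intrr[f(1)-f(0)]\,dx'+o(\ep)=\intrr f'(r_\ep)\,dx'+o(\ep)$, differentiating $f$ explicitly, and extracting the leading term after rescaling. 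You instead write the surface integrand as $(N-2)^{N-1}\ep^{N-1}(A+B)^{-(N-1)}\sqrt{1+|\nabla h|^2}$ with $A=\ep^2+|x'|^2$, $B=2\ep h+h^2$, and perform a binomial/Taylor expansion, isolating $-2(N-1)\ep h\,A^{-N}$ as the unique order-$\ep$ term. Both routes execute the same first-order expansion of the same quantity; the paper's MVT packaging is cleaner in that it produces a single derivative to estimate, while your expansion is more elementary and makes the bookkeeping of the lower-order remainders ($h^2/A$, $|\nabla h|^2$, $(B/A)^2$) — and hence the role of $N\ge 4$ in guaranteeing convergence of the auxiliary integrals $\intrr |y'|^2[1+|y'|^2]^{1-N}\,dy'$ and $\intrr |y'|^4[1+|y'|^2]^{-N}\,dy'$ — fully explicit. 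One small point worth flagging: the dichotomy $N\ge 4$ versus $N=3$ in the lemma is forced mainly by the convergence of the leading-order moment $\intrr g(y')[1+|y'|^2]^{1-N}\,dy'$ appearing in part (i) (which is logarithmically divergent when $N=3$, hence Lemma~\ref{t 4.3}); the error-term integrals in (ii) you highlight happen to have the same threshold, but the constraint is already imposed by (i).
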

\begin{proof}
For (i), a direct calculation implies that
\begin{equation}\label{4.11}
\begin{split}
&\intr |\nabla u_\ep|^2\,dx-\into |\nabla u_\ep|^2\,dx\\&=\intrr
dx'\int_0^{g(x')}(N-2)^{N}\ep^{N-2}\frac{1}{[(\ep+x_N)^2+|x'|^2]^{N-1}}\,dx_N\\&+
\int_{D(0,\delta)}
dx'\int_{g(x')}^{h(x')}(N-2)^{N}\ep^{N-2}\frac{1}{[(\ep+x_N)^2+|x'|^2]^{N-1}}\,dx_N+O(\ep^{N-2})
\\&=\intrr
dx'\int_0^{\ep
g(x')}(N-2)^{N}\frac{1}{[(1+x_N)^2+|x'|^2]^{N-1}}\,dx_N\\&+
\int_{D(0,\delta)}
dx'\int_{g(x')}^{h(x')}(N-2)^{N}\ep^{N-2}\frac{1}{[(\ep+x_N)^2+|x'|^2]^{N-1}}\,dx_N+O(\ep^{N-2})\\
&=(N-2)^{N}\ep \intrr \frac{g(x')}{[1+|x'|^2]^{N-1}}\,dx'\\&+
\int_{D(0,\delta)}
dx'\int_{g(x')}^{h(x')}(N-2)^{N}\ep^{N-2}\frac{1}{[(\ep+x_N)^2+|x'|^2]^{N-1}}\,dx_N+o(\ep).
\end{split}
\end{equation}
 On the other hand, since $h(x')=g(x')+o(|x'|^2)$,
it follows that $\forall \sigma>0$, there exists $C(\sigma)>0$ such
that $|h(x')-g(x')|\leq \sigma|x'|^2+C(\sigma)|x'|^{\frac 52}$ for
$x'\in D(0,\delta)$. Therefore, we have
\begin{eqnarray*}
&&\int_{D(0,\delta)}
dx'\int_{g(x')}^{h(x')}(N-2)^{N}\ep^{N-2}\frac{1}{[(\ep+x_N)^2+|x|^2]^{N-1}}\,dx_N\\&\leq&
C\ep^{N-2}\int_{D(0,\delta)}\,dx'\int_{g(x')}^{h(x')}\frac{1}{[(\ep+x_N)^2+|x|^2]^{N-1}}\,dx_N\\&\leq&
C\ep^{N-2}\int_{D(0,\delta)}\frac{|h(x')-g(x')|}{[\ep^2+|x'|^2]^{N-1}}\,dx'\\&\leq&
C\ep^{N-2}\int_{D(0,\delta)}\frac{\sigma |x'|^2+C(\sigma)|x'|^{\frac
52}}{[\ep^2+|x'|^2]^{N-1}}\,dx'\\&\leq
&C\ep(\sigma+C(\sigma)\ep^{\frac 12}).
\end{eqnarray*}
Since $\sigma$ is arbitrary, we have
\begin{equation}\label{4.12}
\int_{D(0,\delta)}
dx'\int_{g(x')}^{h(x')}(N-2)^{N}\ep^{N-2}\frac{1}{[(\ep+x_N)^2+|x'|^2]^{N-1}}\,dx_N=o(\ep).
\end{equation}
Finally, we deduce from equation \eqref{4.11} and equation
\eqref{4.12} that
$$
\into |\nabla u_\ep|^2\,dx=\intr |\nabla
u_\ep|^2\,dx-(N-2)^{N}\ep\intrr
\frac{g(x')}{[1+|x'|^2]^{N-1}}\,dx'+o(\ep),
$$
this proves (i).

As for (ii), if we denote
$$
f(r)=u_\ep(x',rg(x'))^{\frac{2(N-1)}{N-2}}\sqrt{1+r^2|\nabla
g(x')|^2},
$$
then we have
\begin{equation}\label{4.14}
\begin{split}
&\intpo u_\ep^{2_*}\,dS-\intrr u_\ep(x',0)^{2_*}\,dx'\\&=\intrr
[f(1)-f(0)]\,dx'+o(\ep)\\&=\intrr f'(r_\ep)dx'+o(\ep)\\&=-\intrr
2(N-1)(N-2)^{N-1}\ep^{N-1}\frac{(\ep+r_\ep g(x'))g(x')}{[(\ep+r_\ep
g(x'))^2+|x'|^2]^N}\sqrt{1+r_\ep^2|\nabla
g(x')|^2}\,dx'+o(\ep)\\&=-2(N-1)(N-2)^{N-1}\ep\intrr
\frac{g(x')}{[1+|x'|^2]^N}\,dx'+o(\ep),
\end{split}
\end{equation}
which proves (iii)

(iii) can be proved as in \cite{BN}.

\end{proof}

With the above preparations, we can prove Theorem \ref{t 1.3} for
$N\geq 4$ now.
\begin{proof}[Proof of Theorem \ref{t 1.3} with $N\geq 4$:]
We use the Mountain Pass theorem to prove our result. First, we note
that
$$
I(u)\geq \frac 12\|u\|^2-C\|u\|^{r}-C\|u\|^{2_*},
$$
hence, there exist $\rho,\alpha>0$ such that
$$
I(u)\geq \alpha>0
$$
for $u\in \partial B_\rho$. Moreover, let $u_0\in H^1(\Omega)$ and
$u_0\neq 0$ fixed, then it is easy to check that
$$
I(tu_0)\to -\infty
$$
as $t\to \infty$. So there exists $t_0>0$ such that
$\|t_0u_0\|>\rho$ and
$$
I(t_0u_0)<0.
$$
Define
$$
c=\inf_{\gamma\in \Gamma}\sup_{t\in [0,1]}I(\gamma(t))
$$
with
$$
\Gamma=\{\gamma\in C([0,1], H^1(\Omega)): \gamma(0)=0,
\gamma(1)=t_0u_0\},
$$
then $c$ is a well-defined positive constant. In order to show that
$c$ is indeed a critical value for $I$, we only need to show that
$$
c<\frac 1{2(N-1)}S_T^{N-1}.
$$
For this purpose, we note that
$$
I(t u_\ep)=\frac {t^2}2\|u_\ep\|^2-\frac{t^{r}}{r}\into
u_\ep^{r}\,dx-\frac {t^{2_*}}{{2_*}}\intpo u_\ep^{2_*}\,dS.
$$
Hence there exists $t_\ep $ such that $I(t_\ep u_\ep)$ attains its
maximum at $t_\ep$. Moreover, it follows from Lemma \ref{t 4.2} that
$t_\ep\to 1$ as $\ep\to 0$. Hence, for $N\geq 5$, we get
\begin{equation}\label{4.15}
\begin{split}
I(t_\ep u_\ep)&\leq \frac {t_\ep^2}2\intr |\nabla
u_\ep|^2\,dx-\frac{t_\ep^{2_*}}{2_*}\intrr
|u_\ep(x',0)|^{2_*}\,dx'-\frac{t_\ep^2}{2} (N-2)^{N}\ep\intrr
\frac{g(x')}{[1+|x'|^2]^{N-1}}\,dx'\\&-\frac {t_\ep^r}{r}\into
u_\ep^r\,dx+t_\ep^{2_*}(N-2)^N\ep\intrr
\frac{g(x')}{[1+|x'|^2]^{N}}\,dx'+O(\ep^2).
\end{split}
\end{equation}
We note that the function
$$
f(t)=\frac {t^2}2\intr |\nabla
u_\ep|^2\,dx-\frac{t^{2_*}}{2^*}\intrr |u_\ep(x',0)|^{2_*}\,dx'
$$
attains its maximum at $t=1$ and the maximum is $\frac
1{2(N-1)}S_T^{N-1}$. Hence, to prove $I(t_\ep u_\ep)<0$ for $\ep$
small enough, it is sufficient to prove
\begin{equation}\label{4.16}
(N-2)^N\intrr \frac{g(x')}{[1+|x'|^2]^{N}}\,dx'-\frac
12(N-2)^{N}\intrr \frac{g(x')}{[1+|x'|^2]^{N-1}}\,dx'<0.
\end{equation}
In the following, we will show that equation \eqref{4.16} indeed
holds.

A direct calculation shows that
\begin{equation}\label{4.17}
\begin{split}
&\intrr \frac{g(x')}{[1+|x'|^2]^{N-1}}\,dx'\\&=\intrr
\frac{\sum_{i=1}^{N-1}\alpha_ix_i^2}{[1+|x'|^2]^{N-1}}\,dx'\\&=\frac
12H(0)\intrr \frac{|x'|^2}{[1+|x'|^2]^{N-1}}\,dx'\\&=\frac
12H(0)\omega_{N-2}\int_0^\infty \frac{r^N}{[1+r^2]^{N-1}}\,dr,
\end{split}
\end{equation}
where $H(0)=\frac 2{N-1}{\sum_{i=1}^{N-1}\alpha_i}$ is the mean
curvature of $\partial \Omega$ at $0$, $\omega_{N-2}$ is the area of
the unit sphere in $R^{N-1}$. Similarly, we have
\begin{equation}\label{4.18}
\intrr \frac{g(x')}{[1+|x'|^2]^{N}}\,dx'=\frac 12H(0)\omega_{N-2}
\int_0^\infty \frac{r^N}{[1+r^2]^{N}}\,dr.
\end{equation}
On the other hand, we note that
\begin{equation}\label{4.19}
\begin{split}
&\int_0^\infty
\frac{r^N}{[1+r^2]^{N-1}}\,dr\\&=\frac{2(N-1)}{N+1}\int_0^{\infty}\frac{r^{N+2}}{[1+r^2]^N}\,dr\\
&=\frac{2(N-1)}{N+1}[\int_0^{\infty}\frac{(1+r^2)r^{N}}{[1+r^2]^N}\,dr-\int_0^{\infty}\frac{r^{N}}{[1+r^2]^N}\,dr]\\
&=\frac{2(N-1)}{N+1}[\int_0^{\infty}\frac{r^{N}}{[1+r^2]^{N-1}}\,dr-\int_0^{\infty}\frac{r^{N}}{[1+r^2]^N}\,dr].
\end{split}
\end{equation}
So we get
$$
\int_0^{\infty}\frac{r^{N}}{[1+r^2]^{N-1}}\,dr=\frac{2(N-1)}{N-3}\int_0^{\infty}\frac{r^{N}}{[1+r^2]^N}\,dr.
$$
We infer from the above equations that
\begin{equation}\label{4.20}
\begin{split}
&(N-2)^N\intrr \frac{g(x')}{[1+|x'|^2]^{N}}\,dx'-\frac
12(N-2)^{N}\intrr
\frac{g(x')}{[1+|x'|^2]^{N-1}}\,dx'\\&=-(N-2)^NH(0)\omega_{N-2}
\frac{1}{N-3}\int_0^{\infty}\frac{r^{N}}{[1+r^2]^{N}}\,dr<0
\end{split}
\end{equation}
provided $H(0)>0$. This proved the case $N\geq 5$.

The case $N=4$ is similar. We only need to replace $O(\ep^2)$ by
$O(\ep^2 |\ln \ep|)$ in equation \eqref{4.15}, the rest of the proof
is the same as $N\geq 5$. We omit the details.

\end{proof}

Finally, we study the case $N=3$. In this case, we suppose the
principal curvatures of $\partial \Omega$ at $x_0 \in\partial
\Omega$ belong to interval $(2a,2A)$ for some $0<a\leq A<\infty$.
Then we have $a|x'|^2\leq h(x')\leq A |x'|^2$ for $x'\in
D(0,\delta)$. Moreover, we have the following estimates.
\begin{lemma}\label{t 4.3}
Suppose $N=3$ and $x_0$ as above, then we have the following estimates.\\
(i) $\into |\nabla u_\ep|^2\,dx\leq \intr |\nabla
u_\ep|^2\,dx-C\ep|\ln
\ep|+O(\ep)$;\\
(ii)$\intpo u_\ep^{4}\,dS=\int_{\mathbb R^2}u_\ep(x',0)^4\,dx'-C\ep+o(\ep)$.\\
(iii) $\into u_\ep^2\,dx'=O(\ep)$.
\end{lemma}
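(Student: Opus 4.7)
The plan is to proceed in parallel with Lemma \ref{t 4.2} but to retain the logarithmic divergence that is characteristic of the three-dimensional case. Throughout I would work with the explicit formulas $u_\ep(x)=\ep^{1/2}/[(\ep+x_N)^2+|x'|^2]^{1/2}$, $|\nabla u_\ep(x)|^2=\ep/[(\ep+x_N)^2+|x'|^2]^2$ and $u_\ep(x',0)^4=\ep^2/(\ep^2+|x'|^2)^2$, together with the local graph representation $x_N=h(x')$ of $\partial\Omega$ near $0$ satisfying $a|x'|^2\leq h(x')\leq A|x'|^2$ on $D(0,\delta)$.

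For (i), I would write $\intr|\nabla u_\ep|^2\,dx-\into|\nabla u_\ep|^2\,dx=\int_{\R\setminus\Omega}|\nabla u_\ep|^2\,dx$ and split the integration region into a part away from the origin (contributing $O(\ep)$ by the decay of $|\nabla u_\ep|^2$ and the boundedness of $\Omega$) and the curved slab $\{(x',x_N):x'\in D(0,\delta),\ 0<x_N<h(x')\}$. Using $h(x')\geq a|x'|^2$, the latter is bounded below by $\int_{D(0,\delta)}\int_0^{a|x'|^2}\ep[(\ep+x_N)^2+|x'|^2]^{-2}\,dx_N\,dx'$. On the annulus $\sqrt\ep\lesssim|x'|\lesssim\delta$ the denominator is comparable to $|x'|^4$, so the inner integral is comparable to $a\ep/|x'|^2$, and integration in $x'$ in polar coordinates produces the logarithmic factor $\ep\int_{\sqrt\ep}^\delta r^{-1}\,dr\sim\tfrac12\ep|\ln\ep|$. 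The small regime $|x'|\lesssim\sqrt\ep$ contributes only $O(\ep)$ after the rescaling $x'=\ep y$. Combining yields the inequality.

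For (ii), I would parametrize $\partial\Omega$ near $0$ by $(x',h(x'))$, writing $\intpo u_\ep^4\,dS=\int_{D(0,\delta)}u_\ep(x',h(x'))^4\sqrt{1+|\nabla h|^2}\,dx'+O(\ep^2)$, the error coming from the part of $\partial\Omega$ away from $0$ where $u_\ep$ is of order $\ep^{1/2}$. Likewise $\intrr u_\ep(x',0)^4\,dx'=\int_{D(0,\delta)}u_\ep(x',0)^4\,dx'+O(\ep^2)$. Applying the trick from Lemma \ref{t 4.2}, I would set $f(r)=u_\ep(x',rh(x'))^4\sqrt{1+r^2|\nabla h|^2}$ and express the difference of the two integrals as $\int_{D(0,\delta)}\int_0^1 f'(r)\,dr\,dx'$. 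The dominant term of $f'$ is $-4u_\ep^4\,(\ep+rh(x'))h(x')/[(\ep+rh)^2+|x'|^2]$, of size $\ep^3 h(x')/(\ep^2+|x'|^2)^3$. Substituting $x'=\ep y$ and using the pinching $a|x'|^2\leq h(x')\leq A|x'|^2$ converts the $x'$-integral into a positive constant multiple of $\ep\int_0^{\infty}r^3/(1+r^2)^3\,dr$, which is a positive constant times $\ep$; the remainder from the Jacobian factor is easily checked to be $o(\ep)$. Finally, (iii) is routine: since $\Omega$ is bounded it sits inside some $B_R(0)\cap\R$, and the bound $(\ep+x_N)^2+|x'|^2\geq|x|^2$ on $\R$ gives $\into u_\ep^2\,dx\leq\ep\int_{B_R(0)\cap\{x_N>0\}}|x|^{-2}\,dx=O(\ep)$.

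The principal obstacle is estimate (i): the logarithmic factor $|\ln\ep|$ arises exclusively from the annular range $\sqrt\ep\lesssim|x'|\lesssim\delta$ and is lost if one replaces $|\nabla u_\ep|^2$ by a crude bound of the form $\ep/|x'|^4$ globally, or if one rescales to $\mathbb{R}_+^3$ and forgets the inner cutoff at $|x'|\sim\sqrt\ep$. Separating these two scales while keeping the leftover error uniformly $O(\ep)$ is the delicate point of the argument; all remaining estimates are direct adaptations of the higher-dimensional computations carried out in Lemma \ref{t 4.2}.
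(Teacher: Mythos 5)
Your overall strategy is correct and matches the paper's: for (i) you isolate the curved slab $\{x'\in D(0,\delta),\ 0<x_N<h(x')\}$, lower-bound it by the smaller region $\{0<x_N<a|x'|^2\}$, and extract the $\ep|\ln\ep|$; for (ii) you adapt the mean-value-theorem trick from Lemma~\ref{t 4.2}; and for (iii) you use the direct pointwise bound. Parts (ii) and (iii) are fine, and your $o(\ep)$ bookkeeping for the Jacobian contribution in (ii) is a nice touch that the paper leaves implicit. The computation of the outer annulus $\sqrt\ep\lesssim|x'|\lesssim\delta$ in (i) is also correct and alone already furnishes the required lower bound $C\ep|\ln\ep|+O(\ep)$, so the proof of (i) goes through.

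However, your claim that the region $|x'|\lesssim\sqrt\ep$ contributes only $O(\ep)$ is false, and the surrounding narrative (``the logarithmic factor arises exclusively from the annular range $\sqrt\ep\lesssim|x'|\lesssim\delta$'') is mistaken. The natural scale of $u_\ep$ is $|x'|\sim\ep$, not $|x'|\sim\sqrt\ep$. After the rescaling $x'=\ep y'$, $x_N=\ep y_N$, the slab integral becomes
$\int_{|y'|<\ep^{-1}\delta}\int_0^{a\ep|y'|^2}\frac{dy_N}{[(1+y_N)^2+|y'|^2]^2}\,dy'$, and for $1\lesssim|y'|\lesssim\ep^{-1/2}$ (i.e.\ $\ep\lesssim|x'|\lesssim\sqrt\ep$) the denominator is still $\sim|y'|^4$ while the inner range has length $a\ep|y'|^2$, so this annulus contributes $\sim a\ep\int_1^{\ep^{-1/2}}r^{-1}\,dr\sim\tfrac12 a\ep|\ln\ep|$ --- the same order as the outer annulus, not $O(\ep)$. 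The region that genuinely contributes $O(\ep)$ is $|x'|\lesssim\ep$ (i.e.\ $|y'|\lesssim 1$). This slip is inconsequential for the one-sided inequality you need (you are only discarding additional positive mass), but it undercuts the ``principal obstacle'' discussion at the end: there is in fact no delicate inner cutoff at $|x'|\sim\sqrt\ep$, and the paper's full rescaling of $D(0,\delta)$ to $D(0,\ep^{-1}\delta)$, followed by $\ep\int_{|y'|<\ep^{-1}\delta}\frac{a|y'|^2}{(1+|y'|^2)^2}\,dy'\sim\ep|\ln\ep|$, handles the whole range cleanly without splitting at any intermediate scale.
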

\begin{proof}
As for (i), we infer from the above setting that
\begin{equation}\label{4.21}
\begin{split}
&\into |\nabla u_\ep|^2\,dx=\intr |\nabla
u_\ep|^2\,dx-\int_{D(0,\delta)}dx'\int_0^{h(x')}|\nabla
u_\ep|^2\,dx_N+O(\ep)\\ &\leq \intr |\nabla
u_\ep|^2\,dx-C\int_{D(0,\delta)}dx'\int_0^{a|x'|^2}\frac{\ep}{[(\ep+x_N)^2+|x'|^2]^2}\,dx_N+O(\ep)\\&\leq
\intr |\nabla u_\ep|^2\,dx-
C\int_{D(0,\ep^{-1}\delta)}dx'\int_0^{\ep a|x'|^2}\frac
1{[(1+x_N)^2+|x'|^2]^2}\,dx_N+O(\ep)\\&=\intr |\nabla
u_\ep|^2\,dx-C\ep
\int_{D(0,\ep^{-1}\delta)}\frac{a|x'|^2}{[1+|x'|^2]^2} dx'+O(\ep)
\\&=\intr |\nabla
u_\ep|^2\,dx-C\ep[C+|\ln \ep|]+O(\ep)\\&=\intr |\nabla
u_\ep|^2\,dx-C\ep|\ln \ep|+O(\ep),
\end{split}
\end{equation}
this proves (i).

(ii) is the same as $N\geq 4$, we omit the details.

(iii) can be proved in a similar way as in \cite{BN}. In fact, we
have
\begin{equation}\label{4.22}
\begin{split}
\into u_\ep^2\,dx&\leq
C\int_{B_R}\frac{\ep}{(\ep+x_N)^2+|x'|^2}\,dx\\&\leq C\ep
\int_{B_R}\frac 1{\ep^2+|x|^2}\,dx\\&=C\ep
\int_0^R\frac{r^2}{\ep^2+r^2}\,dr\\&=C\ep[\int_0^{\ep}\frac
12\,dr+\int_\ep^R 1\,dr]\\&=C\ep[\frac 12\ep+(R-\ep)]
\\&\leq C\ep.
\end{split}
\end{equation}
This finishes the proof of this lemma.

\end{proof}
\begin{proof}[Proof of Theorem \ref{t 1.3} with $N=3$:]
By the same reason as for $N\geq 4$, we conclude that the functional
$I$ possesses the Mountain Pass structure, so we only need to show
that the Mountain Pass level is below $\frac 1{2(N-1)}S_T^{ N -1}$.
For this purpose, we let $u_\ep, t_\ep$ the same as the case $N\geq
4$, then we have
\begin{equation}\label{4.23}
\begin{split}
I(t_\ep u_\ep)&\leq \frac 1{2(N-1)}S_T^{N-1}-C\ep|\ln \ep|-C\into
u_\ep^r\,dr+O(\ep).
\end{split}
\end{equation}
It is easy to conclude from the above inequality that
$$
I(t_\ep u_\ep)<  \frac 1{2(N-1)}S_T^{N-1}
$$
for $\ep>0$ small enough. This finishes the proof of Theorem \ref{t
1.3} for $N=3$.
\end{proof}

\section{Proof of Theorem \ref{1.4}}
In the finally section, we study the existence result of equation
\eqref{1.1} with double critical exponents, i.e., $r=2^*$ and
$q=2_*$. We first consider the following limit problem
\begin{equation}\label{5.1}
  \left\{
  \begin{array}{ll}
  \displaystyle
\lp u=u^{2^*-1}   &{\rm in}\quad \mathbb R_+^N,    \\
\\ \frac{\partial u}{\partial \nu}=u^{2_*-1} &{\rm on }\quad  \partial
\mathbb R_+^N.
\end{array}
\right.
\end{equation}
Y.Li and M.Zhu \cite{LZu} classified all the positive solutions of
problem \eqref{5.1}. More precisely, they proved the positive
solutions have the following form
\begin{equation}\label{5.2}
u_\ep(x)=\frac{[N(N-2)]^{\frac{N-2}4}\ep^{\frac
{N-2}2}}{[\ep^2+|x'|^2+(x_N+\ep x_N^0)^2]^{\frac{N-2}2}},
\end{equation}
where $x_N^0=(\frac N{N-2})^{\frac 12}$. Let
$$
I_\infty(u)=\frac 12\int_{\R} |\nabla u|^2\,dx-\frac 1{2^*}\int_{\R}
|u|^{2^*}\,dx-\frac 1{2_*}\int_{\BR} |u(x,0)|^{2_*}\,dx'
$$
defined on $D^{1,2}(\R)$, $c_\infty=I_\infty(u_\ep)$ and
\begin{equation}\label{5.3}
I(u)=\frac 12\into |\nabla u|^2+u^2\,dx-\frac 1{2^*}\into
|u|^{2^*}\,dx-\frac 1{{2_*}}\intpo |u|^{2_*}\,dS
\end{equation}
defined on $H^1(\Omega)$, then we have the following local
compactness result.
\begin{lemma}\label{t 5.1}
Let $\{u_n\}$ be a $(PS)_c$ sequence for $I$ with $c\in
(0,c_\infty)$, that is, $I(u_n)\to c$ and $I'(u_n)\to 0$ as
$n\to\infty$, then $\{u_n\}$ has a convergent subsequence.
\end{lemma}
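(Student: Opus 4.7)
The overall strategy will follow the template established in Lemma \ref{t 3.1} and Lemma \ref{t 4.1}, but with both critical nonlinearities simultaneously active. First I would establish boundedness of $\{u_n\}$. Since $2<2_*<2^*$, forming the combination $I(u_n)-\frac{1}{2_*}\langle I'(u_n),u_n\rangle$ gives
\[
\left(\frac12-\frac1{2_*}\right)\|u_n\|^2+\left(\frac1{2_*}-\frac1{2^*}\right)\into|u_n|^{2^*}\,dx=c+o(1)+o(1)\|u_n\|,
\]
with both coefficients on the left positive, so $\{u_n\}$ is bounded in $H^1(\Omega)$. Passing to a subsequence, $u_n\rightharpoonup u$ in $H^1(\Omega)$, strongly in $L^2(\Omega)$, and weakly in $L^{2^*}(\Omega)$ and $L^{2_*}(\partial\Omega)$; standard test-function arguments identify $u$ as a weak solution of \eqref{1.1}. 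The same Nehari-type identity applied to $u$ gives
\[
I(u)=\frac{1}{N}\into|u|^{2^*}\,dx+\frac{1}{2(N-1)}\intpo|u|^{2_*}\,dS\ \geq\ 0.
\]

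Set $v_n=u_n-u$. Applying the Brezis--Lieb lemma in $L^{2^*}(\Omega)$ and in $L^{2_*}(\partial\Omega)$, and using the Rellich compactness $v_n\to 0$ in $L^2(\Omega)$, the relations $I(u_n)\to c$, $I'(u_n)\to 0$ and $I'(u)=0$ combine into
\[
\into|\nabla v_n|^2\,dx-\into|v_n|^{2^*}\,dx-\intpo|v_n|^{2_*}\,dS=o(1),
\]
\[
\frac12\into|\nabla v_n|^2\,dx-\frac{1}{2^*}\into|v_n|^{2^*}\,dx-\frac{1}{2_*}\intpo|v_n|^{2_*}\,dS=c-I(u)+o(1).
\]
Eliminating the gradient term yields
\[
\frac{1}{N}\into|v_n|^{2^*}\,dx+\frac{1}{2(N-1)}\intpo|v_n|^{2_*}\,dS\ \to\ c-I(u).
\]

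It remains to show $v_n\to 0$ in $H^1(\Omega)$. Arguing by contradiction, assume along a subsequence that $A:=\lim\into|v_n|^{2^*}\,dx$ and $B:=\lim\intpo|v_n|^{2_*}\,dS$ satisfy $A+B>0$. The plan is then a concentration/blow-up argument: using a Levy-type concentration function for the measure $|v_n|^{2^*}\,dx+|v_n|^{2_*}\,dS$ on $\bar\Omega$, pick a concentration point $x_n\to x_0\in\bar\Omega$ and a scale $\lambda_n\to 0$, and rescale $w_n(y)=\lambda_n^{(N-2)/2}v_n(x_n+\lambda_n y)$. After flattening $\partial\Omega$ near $x_0$ (when $x_0\in\partial\Omega$), $w_n$ converges weakly to a nontrivial solution $w$ of the limit system \eqref{5.1} on $\R$, or of its pure-interior version $-\Delta w=|w|^{2^*-2}w$ on $\mathbb R^N$ if $d(x_n,\partial\Omega)/\lambda_n\to\infty$. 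By the classification in \cite{LZu}, every such nontrivial half-space solution has energy $I_\infty(w)\geq c_\infty$; the pure-interior case is reduced to this via the bound $\frac{1}{N}S^{N/2}\geq c_\infty$, itself obtained by testing the Nehari characterization of $c_\infty$ against a truncated Aubin--Talenti profile. Standard Brezis--Lieb bookkeeping then gives $\tfrac{A}{N}+\tfrac{B}{2(N-1)}\geq c_\infty$, hence $c-I(u)\geq c_\infty$; combined with $I(u)\geq 0$ this contradicts $c<c_\infty$.

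The main obstacle will be the concentration analysis in the doubly critical regime: one must select the blow-up scale $\lambda_n$ so that both the interior integral $\into|v_n|^{2^*}\,dx$ and the boundary integral $\intpo|v_n|^{2_*}\,dS$ survive in the rescaled limit, and so that the weak limit $w$ is a genuinely coupled solution of \eqref{5.1} rather than a degenerate limit where only one of the two critical nonlinearities persists. Once that delicate scaling is set up, the comparison with the ground state level $c_\infty$ from \cite{LZu} delivers the contradiction and therefore the desired strong convergence $u_n\to u$ in $H^1(\Omega)$.
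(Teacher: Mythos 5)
Your overall strategy matches the paper's: establish boundedness via $I(u_n)-\tfrac1{2_*}\langle I'(u_n),u_n\rangle$, extract a weak limit $u$ with $I(u)\geq0$, apply Brezis--Lieb to $v_n=u_n-u$, and derive a contradiction with $c<c_\infty$ by showing the residual concentration carries energy at least $c_\infty$. The difference is in how that last step is organised. The paper argues directly with $v_n$ on $\Omega$ and splits into two cases, namely $\intpo|v_n|^{2_*}\,dS\not\to0$ (asserting $\{v_n\}$ is then a $(PS)$ sequence for $I_\infty$, so $\liminf I(v_n)\geq c_\infty$) and $\intpo|v_n|^{2_*}\,dS\to0$ but $\into|v_n|^{2^*}\,dx\not\to0$ (asserting $\{v_n\}$ is a $(PS)$ sequence for the half-space Neumann functional $\bar I$ or the whole-space functional $\tilde I$). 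You instead propose an explicit blow-up/rescaling argument at a concentration point, which is conceptually the justification the paper is implicitly invoking but leaves out. In this sense your route is more transparent: the paper's assertion ``$\{v_n\}$ is a $(PS)$ sequence for $I_\infty$'' presupposes precisely the translation-dilation step you spell out. You are also right to flag the scaling selection as the crux; the paper does not address it either.

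There is one concrete gap relative to what the paper actually uses. You reduce the degenerate case only to the whole-space Sobolev level via $\tfrac1N S^{N/2}\geq c_\infty$, but boundary concentration with the trace term degenerating produces a half-space Neumann profile with energy only $\tfrac{1}{2N}S^{N/2}$, which is the level the paper's functional $\bar I$ has, and the paper correspondingly invokes the sharper inequality $\tfrac{1}{2N}S^{N/2}>c_\infty$. If you only compare against $\tfrac1N S^{N/2}$ you have no contradiction in that subcase. (Incidentally, both you and the paper implicitly also need the boundary-only profile, with ground state level $\tfrac{1}{2(N-1)}S_T^{N-1}$, to have energy at least $c_\infty$; neither treatment addresses that subcase explicitly.) So the proposal is in the right spirit and actually more candid about what must be proved, but you should replace the comparison constant $\tfrac1N S^{N/2}$ by $\tfrac1{2N}S^{N/2}$ and account for the half-space Neumann bubble as a separate possible limit profile, as in the paper's Case 2.
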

\begin{proof}
Let $\{u_n\}$ be a $(PS)_c$ sequence for $I$ with $c\in
(0,c_\infty)$, that is,
$$
I(u_n)\to c
$$
and
$$
I'(u_n)\to 0
$$
as $n\to \infty$, we will show that $\{u_n\}$ has a convergent
subsequence.

We first show that $\{u_n\}$ is bounded. By means of a $(PS)_c$
sequence, we have the following two equations
$$
I(u_n)=\frac 12\into |\nabla u_n|+u_n^2\,dx-\frac 1{2^*}\into
|u_n|^{2^*}\,dx-\frac 1{2_*}\intpo |u_n|^{2_*}\,dS=c+o(1)
$$
and
$$
\langle I'(u_n),u_n\rangle=\into |\nabla u_n|+u_n^2\,dx-\into
|u_n|^{2^*}\,dx-\intpo |u_n|^{2_*}\,dS=o(1)\|u_n\|.
$$
We infer from the above two equations that
$$
(\frac 12-\frac 1{2_*})\|u_n\|^2\leq I(u_n)-\frac 1{2_*}\langle
I'(u_n),u_n\rangle=c+o(1)+o(1)\|u_n\|.
$$
Hence, we conclude from the above equation that $\{u_n\}$ is
bounded.

Next, we show that $\{u_n\}$ has a convergent subsequence. Since
$u_n$ is bounded, we can suppose that $u_n\rightharpoonup u_0$ in
$H^1(\Omega)$, $u_n\rightharpoonup u_0$ in $L^{2^*}(\Omega)$,
$u_n\rightharpoonup u_0$ in $L^{2_*}(\partial \Omega)$ and $u_n\to
u_0$ in $L^2(\Omega)$.  Let $v_n=u_n-u_0$, if $v_n\to 0$ in
$H^1(\Omega)$, then the proof is complete. So in the following, we
assume that $v_n\not\to 0$. We deduce from Brezis-Lieb Theorem
\cite{BL} that
\begin{equation}\label{5.4}
\frac 12\into |\nabla v_n|\,dx-\frac 1{2^*}\into
|v_n|^{2^*}\,dx-\frac 1{2_*}\intpo |v_n|^{2_*}\,dS=c+o(1)-I(u_0)
\end{equation}
and
\begin{equation}\label{5.5}
\into |\nabla v_n|\,dx-\into |v_n|^{2^*}\,dx-\intpo
|v_n|^{2_*}\,dS=o(1).
\end{equation}
We distinguishes two cases:\\
 Case 1: $\intpo |v_n|^{2_*}\,dS\not\to 0$.\\
 Case 2: $\intpo |v_n|^{2_*}\,dS\to 0$ but $\into |v_n|^{2^*}\,dx\not\to
 0$.

If case 1 occurs, then we conclude that $\{v_n\}$ is a $(PS)$
sequence for $I_\infty$, hence we have that
$$
\lim\inf_{n\to \infty}I(v_n)\geq c_\infty,
$$
which further implies
$$
\lim\inf_{n\to \infty}I(u_n)=\lim\inf_{n\to \infty}I(v_n)+I(u_0)\geq
\lim\inf_{n\to \infty}I(v_n) \geq c_\infty,
$$
which contradicts that $c\in (0,c_\infty)$.

Similarly, if case 2 occurs, then we conclude that $\{v_n\}$ is a
$(PS)$ sequence for $\bar I$ or $\tilde I$ which are defined by
$$
\bar I(u)=\frac 12\intr |\nabla u|^2\,dx-\frac 1{2^*}\intr
|u_n|^{2^*}\,dx,\quad u\in D^{1,2}(\R)
$$
and
$$
\tilde I(u)=\frac 12\int_{\mathbb R^N} |\nabla u|^2\,dx-\frac
1{2^*}\int_{\mathbb R^N} |u_n|^{2^*}\,dx,\quad u\in D^{1,2}(\mathbb
R^N)
$$
respectively. We note that the ground state levels of $\bar I$ and
$\tilde I$ are $\frac{1}{2N}S^{\frac N2}$ and $\frac{1}{N}S^{\frac
N2}$ respectively. Moreover, a easy calculation shows that
$\frac{1}{2N}S^{\frac N2}>c_\infty$. So we deduce that
$$
\lim\inf_{n\to \infty}I(u_n)\geq \lim\inf_{n\to
\infty}I(v_n)+I(u_0)\geq \lim\inf_{n\to \infty}I(v_n) > c_\infty,
$$
which contradicts that $c\in (0,c_\infty)$.

Finally, we must have $v_n\to 0$ or $u_n\to u_0$ in $H^1(\Omega)$.
This completes the proof of this lemma.
\end{proof}

In order to imply the Mountain Pass theorem to functional $I$, we
must show that the Mountain Pass level of $I$ is indeed below
$c_\infty$. As before, since $\Omega$ is bounded, then exists a ball
$B_R(\bar x)$ containing $\Omega$ and $\partial B_R(\bar x)\cap \bar
\Omega\neq \emptyset$. Suppose that $x_0\in \partial B_R(\bar x)\cap
\bar \Omega$, then we have $2\alpha_i\geq \frac 1R$ for each $1\leq
i\leq N-1$, where $2\alpha_i(i=1,\cdots,N-1)$ are the principal
curvatures of $\partial \Omega$ at $x_0$. We can suppose $x_0=0$ and
$\Omega\subset \{x:x_N>0\}$ without loss of generality, then the
boundary of $ \Omega$ can be represented by
$$
x_N=h(x')=\Sigma_{i=1}^{N-1}\alpha_ix_i^2+o(|x'|^2),\quad \forall\
x'=(x_1,x_2,\cdots,x_{N-1})\in D(0,\delta)
$$
for some $\delta>0$, where $D(0,\delta)=B_\delta
(0)\cap\{x:x_N=0\}$.

With the above notations, we have the following estimates.
\begin{lemma}\label{t 5.2}
Let $u_\ep$ be defined by equation \eqref{5.2} and suppose $N\geq
4$, then we have\\
(i) $\int_\Omega|\nabla u_\ep|^2\,dx=\int_{\R}|\nabla
u_\ep|^2\,dx-N^{\frac{N-2}2}(N-2)^{\frac
{N+2}2}\ep\int_{R^{N-1}}\frac{[|x'|^2+(x_N^0)^2]g(x')}{[1+|x'|^2+(x_N^0)^2]^N}\,dx'+o(\ep)$.\\
(ii) $\int_\Omega u_\ep^{2^*}\,dx=\int_{\R}
u_\ep^{2^*}\,dx-N^{\frac{N}2}(N-2)^{\frac
{N}2}\ep\int_{R^{N-1}}\frac{g(x')}{[1+|x'|^2+(x_N^0)^2]^N}\,dx'+o(\ep)$.\\
(iii) $\int_{\partial\Omega}u_\ep^{2_*}\,dS=\int_{\mathbb R^{N-1}}
u_\ep^{2_*}\,dx'-2(N-1)N^{\frac{N}2}(N-2)^{\frac
{N-2}2}\ep\int_{R^{N-1}}\frac{g(x')}{[1+|x'|^2+(x_N^0)^2]^N}\,dx'+o(\ep).$\\
(iv)
$\into u_\ep^2\,dx=\left\{
  \begin{array}{ll}
  \displaystyle
 O(\ep^2|\ln \ep|) &\quad  N=4,\\
\\O(\ep^2)  &\quad N\geq 5,
\end{array}
\right.$
\end{lemma}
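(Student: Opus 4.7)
The proof parallels the earlier Lemmas \ref{t 3.2} and \ref{t 4.2}, with one new feature: the bubble \eqref{5.2} is shifted in the $x_N$ direction by $\ep x_N^0$, and this shift is exactly what produces the term $(x_N^0)^2$ in the denominators of the stated estimates. Statement (iv) is identical to the corresponding computation in \cite{BN}, so I will focus on (i)--(iii).

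For (i), I would write
$$
\intr |\nabla u_\ep|^2\,dx - \into |\nabla u_\ep|^2\,dx = \int_{D(0,\delta)} dx'\int_0^{h(x')} |\nabla u_\ep|^2\,dx_N + O(\ep^{N-2}),
$$
the remainder coming from the part of $\R\setminus\Omega$ outside $B_\delta(0)$, where $u_\ep$ is uniformly $O(\ep^{(N-2)/2})$. The inner integral I would split as $\int_0^{g(x')}+\int_{g(x')}^{h(x')}$. The second piece is controlled by $|h(x')-g(x')| \le \sigma|x'|^2 + C(\sigma)|x'|^{5/2}$ exactly as in equation \eqref{3.11} and contributes $o(\ep)$ since $N\ge 4$. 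For the main piece, inserting the explicit expression
$$
|\nabla u_\ep|^2 = N^{(N-2)/2}(N-2)^{(N+2)/2}\ep^{N-2}\frac{|x'|^2+(x_N+\ep x_N^0)^2}{[\ep^2+|x'|^2+(x_N+\ep x_N^0)^2]^N}
$$
and rescaling via $x' = \ep y'$ (so $g(x')=\ep^2 g(y')$) and $x_N = \ep s$ collapses the $s$-range to $(0,\ep g(y'))$; approximating the integrand at $s = 0$, where $(x_N+\ep x_N^0)^2/\ep^2$ equals $(x_N^0)^2$, yields the claimed leading coefficient. Part (ii) is the same calculation applied to
$$
u_\ep^{2^*} = [N(N-2)]^{N/2}\ep^N[\ep^2+|x'|^2+(x_N+\ep x_N^0)^2]^{-N},
$$
the only change being the numerator of the integrand.

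For (iii), I would parametrize $\partial\Omega\cap B_\delta(0)$ as $x'\mapsto(x',h(x'))$ with $dS = \sqrt{1+|\nabla h|^2}\,dx'$ and compare with $\intrr u_\ep(x',0)^{2_*}\,dx'$. Since $h = g + o(|x'|^2)$, replacing $h$ by $g$ in the integrand produces only an $o(\ep)$ error by the same bound used in (i). After this replacement, one applies the mean value theorem to $f(r) = u_\ep(x',rg(x'))^{2_*}\sqrt{1+r^2|\nabla g|^2}$, as in equation \eqref{4.14}; the dominant contribution is $2_* u_\ep^{2_*-1}\partial_{x_N}u_\ep \cdot g(x')$, which at $x_N \approx 0$ equals a multiple of $\ep x_N^0\,g(x')/[\ep^2+|x'|^2+\ep^2(x_N^0)^2]^{N/2}$. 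The scaling $x'=\ep y'$ then produces the factor of $\ep$ and, using $x_N^0 = (N/(N-2))^{1/2}$ to simplify $[N(N-2)]^{(N-1)/2} x_N^0 = N^{N/2}(N-2)^{(N-2)/2}$, gives the advertised coefficient $2(N-1)N^{N/2}(N-2)^{(N-2)/2}$ together with the integrand $g(y')/[1+|y'|^2+(x_N^0)^2]^N$.

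The main obstacle is not conceptual but computational: tracking the numerical constants through the rescalings and verifying that every remainder --- the tails outside $D(0,\delta)$, the $h-g$ correction, the $r\,|\nabla g|^2$ term arising from differentiating $\sqrt{1+r^2|\nabla g|^2}$, and the higher-order terms from the Taylor expansion at $s=0$ --- is genuinely $o(\ep)$. The assumption $N\ge 4$ is used precisely to absorb the $\int_{g(x')}^{h(x')}$ correction at first order in $\ep$; for $N=3$ one would need a separate argument in the spirit of Lemma \ref{t 3.3}.
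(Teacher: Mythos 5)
Your proposal is correct and follows essentially the same route as the paper's own (sketched) proof: decompose $\R\setminus\Omega$ into the strip under the graph of $g$ plus the $h-g$ correction plus the far-field tail, rescale $x'=\ep y'$, $x_N=\ep s$ so the $s$-range collapses to $(0,\ep g(y'))$ and the shift produces $(x_N^0)^2$, and for (iii) apply the mean-value theorem to $f(r)=u_\ep(x',rg(x'))^{2_*}\sqrt{1+r^2|\nabla g|^2}$, using $x_N^0=(N/(N-2))^{1/2}$ to collapse $[N(N-2)]^{(N-1)/2}x_N^0$ into $N^{N/2}(N-2)^{(N-2)/2}$. The remainder estimates you flag (tail $O(\ep^{N-2})$, $h-g$ correction via $|h-g|\le\sigma|x'|^2+C(\sigma)|x'|^{5/2}$ needing $N\ge 4$) are exactly those the paper imports from Lemmas \ref{t 3.2} and \ref{t 4.2}.
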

\begin{proof}
The proof of this lemma is similar to the previous ones. We sketch
it. For (i), a direct calculation shows that
\begin{equation}
\begin{split}
&-\int_{\Omega}|\nabla u_\ep|^2\,dx+\int_{\R}|\nabla
u_\ep|^2\,dx\\&=N^{\frac {N-2}2}(N-2)^{\frac {N+2}2}\ep^{N-2}\intrr
dx'\int_0^{g(x')} \frac{|x'|^2+(x_N+\ep
x_N^0)^2}{[\ep^2+|x'|^2+(x_N+\ep
x_N^0)^2]^N}\,dx_N+o(\ep)\\&=N^{\frac {N-2}2}(N-2)^{\frac
{N+2}2}\ep^{N-2}\intrr \ep^{N-1}\,dy'\int_0^{\ep
g(y')}\frac{\ep^2[|y'|^2+(y_N+x_N^0)^2]}{\ep^{2N}[1+|y'|^2+(y_N+x_N^0)^2]^N}\ep
\,dy_N+o(\ep)\\&=N^{\frac {N-2}2}(N-2)^{\frac {N+2}2} \ep\intrr
\frac{[|y'|^2+(x_N^0)^2]g(y')}{[1+|y'|^2+(x_N^0)^2]^N}\,dy'+o(\ep),
\end{split}
\end{equation}
hence (i) follows.

Similarly, as for (ii), we have
\begin{equation}
\begin{split}
&\intr u_\ep^{2^*}\,dx-\into u_\ep^{2^*}\,dx\\&=N^{\frac
{N}2}(N-2)^{\frac {N}2}\intrr
dx'\int_0^{g(x')}\frac{\ep^N}{[\ep^2+|x'|^2+(x_N+\ep
x_N^0)^2]^N}\,dx_N+o(\ep)\\&=N^{\frac {N}2}(N-2)^{\frac {N}2}\intrr
\ep^{N-1}dy'\int_0^{\ep
g(y')}\frac{\ep^N}{\ep^{2N}[1+|y'|^2+(y_N+x_N^0)^2]^N}\ep\,dy_N+o(\ep)\\&=N^{\frac
{N}2}(N-2)^{\frac {N}2}\ep
\intrr\frac{g(y')}{[1+|y'|^2+(x_N^0)^2]^N}\,dy'+o(\ep),
\end{split}
\end{equation}
(ii) follows.

As for (iii), we define
$$
f(r)=u_\ep(x',rg(x'))^{\frac{2(N-1)}{N-2}}\sqrt{1+r^2|\nabla
g(x')|^2},
$$
then we get
\begin{equation}
\begin{split}
&\intpo u_\ep^{\frac{2(N-1)}{N-2}}\,d\sigma-\intrr
u_\ep(x',0)^{\frac{2(N-1)}{N-2}}\,dx'\\ &=\intrr
u_\ep(x',g(x'))^{\frac{2(N-1)}{N-2}}\sqrt{1+|\nabla
g(x')|^2}\,dx'-\intrr
u_\ep(x',0)^{\frac{2(N-1)}{N-2}}\,dx'+o(\ep)\\&=\intrr
f(1)-f(0)\,dx'+o(\ep)\\&=-2(N-1)N^{\frac
{N-1}2}(N-2)^{\frac{N-1}2}\ep^{N-1}\\&\intrr
\frac{1}{[\ep^2+|x'|^2+(r_\ep g(x')+\ep x_N^0)^2]^N}[r_\ep g(x')+\ep
x_N^0]g(x')\sqrt{1+r_\ep^2|\nabla
g(x')|^2}\,dx'+o(\ep)\\&=-2(N-1)N^{\frac
{N-1}2}(N-2)^{\frac{N-1}2}\ep^{N-1}\\&\intrr \frac{\ep [r_\ep \ep
g(y')+x_N^0]\ep^2 g(y')}{\ep^{2N}[1+|y'|^2+(r_\ep \ep
g(y')+x_N^0)^2]^N}\sqrt{1+r_\ep^2\ep^2|\nabla
g(y')|^2}\ep^{N-1}\,dy'+o(\ep)
\\&=-2(N-1)N^{\frac {N-1}2}(N-2)^{\frac{N-1}2}\ep\intrr
\frac{x_N^0g(y')}{[1+|y'|^2+(x_N^0)^2]^N}dy'+o(\ep)\\&=-2(N-1)N^{\frac
N2}(N-2)^{\frac {N-2}2}\ep\intrr
\frac{g(y')}{[1+|y'|^2+(x_N^0)^2]^N}dy'+o(\ep),
\end{split}
\end{equation}
we get (iii).

(iv) is proved in \cite{BN}.

\end{proof}

\begin{lemma}\label{t 5.3}
Suppose that $N\geq 4$, the the following inequality holds
$$\sup_{t\geq 0}I(tu_\ep)<c_\infty.$$
\end{lemma}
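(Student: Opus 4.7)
The plan is to combine the boundary perturbation expansions of Lemma~\ref{t 5.2} with the fact that $u_\ep$ is a ground-state solution of the limit problem~\eqref{5.1} to expand $\sup_{t\geq 0}I(tu_\ep)$ about $c_\infty$ and to show that the leading correction term is strictly negative. I first choose $t_\ep>0$ to maximize $t\mapsto I(tu_\ep)$; such a maximizer exists since this map is continuous, strictly positive for small $t>0$, and tends to $-\infty$ as $t\to\infty$. I then decompose
\[
I(tu_\ep)=I_\infty(tu_\ep)+\frac{t^2}{2}\into u_\ep^2\,dx+R_\ep(t),
\]
where Lemma~\ref{t 5.2}(i)--(iii) furnishes
\[
R_\ep(t)=-\frac{t^2}{2}\alpha\ep+\frac{t^{2^*}}{2^*}\beta\ep+\frac{t^{2_*}}{2_*}\gamma\ep+o(\ep),
\]
with $\alpha=N^{(N-2)/2}(N-2)^{(N+2)/2}K_1$, $\beta=N^{N/2}(N-2)^{N/2}K_2$, and $\gamma=2(N-1)N^{N/2}(N-2)^{(N-2)/2}K_2$ all strictly positive. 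Positivity follows from $g>0$: since $x_0$ is placed on the smallest enclosing ball, every principal curvature coefficient satisfies $\alpha_i\geq 1/(2R)>0$.

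Because $u_\ep$ solves \eqref{5.1}, the Nehari identity $\intr|\nabla u_\ep|^2\,dx=\intr u_\ep^{2^*}\,dx+\intrr u_\ep(x',0)^{2_*}\,dx'$ makes $t=1$ the unique global maximizer of $I_\infty(tu_\ep)$, with value $c_\infty$. Strict concavity of $t\mapsto I_\infty(tu_\ep)$ at $t=1$, together with the $O(\ep)$ size of the perturbation $R_\ep+\tfrac{t^2}{2}\into u_\ep^2$, will force $t_\ep\to 1$ as $\ep\to 0$. Combining the trivial bound $I_\infty(t_\ep u_\ep)\leq c_\infty$ with Lemma~\ref{t 5.2}(iv), which gives $\into u_\ep^2\,dx=o(\ep)$ for $N\geq 4$, I obtain
\[
\sup_{t\geq 0}I(tu_\ep)\;\leq\;c_\infty+\Bigl(-\frac{\alpha}{2}+\frac{\beta}{2^*}+\frac{\gamma}{2_*}\Bigr)\ep+o(\ep),
\]
so the lemma reduces to the single algebraic inequality $\frac{\alpha}{2}>\frac{\beta}{2^*}+\frac{\gamma}{2_*}$.

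To verify this, I will substitute $1/2^*=(N-2)/(2N)$ and $1/2_*=(N-2)/(2(N-1))$ together with the explicit formulas for $\alpha,\beta,\gamma$, and use the splitting $K_1=M+\frac{N}{N-2}K_2$ with $M:=\intrr|x'|^2 g(x')/[1+|x'|^2+(x_N^0)^2]^N\,dx'$. After this and the symmetry reduction via $g(x')=\sum\alpha_i x_i^2$ and $\int x_i^2\varphi(|x'|)\,dx'=\frac{1}{N-1}\int|x'|^2\varphi(|x'|)\,dx'$, followed by polar coordinates with $A:=1+(x_N^0)^2=2(N-1)/(N-2)$, the inequality collapses to the one-variable statement
\[
\int_0^\infty\frac{r^{N+2}}{(r^2+A)^N}\,dr>\frac{2(N-1)}{N-2}\int_0^\infty\frac{r^N}{(r^2+A)^N}\,dr.
\]
One integration by parts ($u=r^{N+1}$, $dv=r\,dr/(r^2+A)^N$) produces the exact recurrence $\int_0^\infty r^{N+2}/(r^2+A)^N\,dr=\frac{A(N+1)}{N-3}\int_0^\infty r^N/(r^2+A)^N\,dr$, valid for $N\geq 4$; plugging in $A$ reduces the required strict inequality to $(N+1)/(N-3)>1$, which is immediate.

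The main obstacle will be this last step: the coefficients $\alpha,\beta,\gamma$ carry intricate $N$-dependent powers, and strict negativity of the combined correction is only revealed after the careful cancellation that is ultimately captured by the one-dimensional recurrence $I_{N+2}=A(N+1)(N-3)^{-1}I_N$. Both this recurrence (through the denominator $N-3$) and Lemma~\ref{t 5.2}(iv) explain the structural restriction to $N\geq 4$.
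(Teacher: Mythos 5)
Your proposal is correct and follows essentially the same route as the paper: expand $I(t_\ep u_\ep)$ via Lemma \ref{t 5.2}, use $I_\infty(t_\ep u_\ep)\leq c_\infty$ together with $t_\ep\to 1$ and $\into u_\ep^2\,dx=o(\ep)$ for $N\geq 4$, reduce the $O(\ep)$ correction to a one-dimensional radial inequality, and settle it with the same integration-by-parts recurrence $\int_0^\infty r^{N+2}(c+r^2)^{-N}\,dr=\tfrac{2(N-1)(N+1)}{(N-2)(N-3)}\int_0^\infty r^{N}(c+r^2)^{-N}\,dr$ (with $c=1+(x_N^0)^2=\tfrac{2(N-1)}{N-2}$). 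The splitting $K_1=M+\tfrac{N}{N-2}K_2$, the symmetry reduction, and the final collapse to $(N+1)/(N-3)>1$ all match the paper's computation leading to the strict negativity $-2N^{\frac{N-2}{2}}(N-2)^{\frac{N}{2}}\tfrac{N-1}{N-3}H(0)\omega_{N-2}\int_0^\infty r^{N}(c+r^2)^{-N}\,dr<0$.
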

\begin{proof}
We infer from Lemma \ref{t 5.2} that
\begin{equation}
\begin{split}
I(tu_\ep)&=\frac {t^2}2\intr |\nabla u_\ep|^2\,dx-\frac
{t^{2^*}}{2^*}\intr u_\ep^{2^*}\,dx-\frac{t^{2_*}}{2_*}\intrr
u_\ep(x',0)^{2_*}\,dx'\\&-\frac {t^2}2N^{\frac{N-2}2}(N-2)^{\frac
{N+2}2}\ep\intrr
\frac{[|x'|^2+(x_N^0)^2]g(x')}{[1+|x'|^2+(x_n^0)^2]^N}\,dx'\\&+\frac
{t^{2^*}}{2^*}N^{\frac{N}2}(N-2)^{\frac {N}2}\ep\intrr
\frac{g(x')}{[1+|x'|^2+(x_n^0)^2]^N}\,dx'\\&+\frac
{t^{2_*}}{2_*}2(N-1)N^{\frac{N}2}(N-2)^{\frac {N-2}2}\ep\intrr
\frac{g(x')}{[1+|x'|^2+(x_n^0)^2]^N}\,dx'+o(\ep).
\end{split}
\end{equation}
We assume that $I(tu_\ep)$ attains its maximum at $t_\ep$, then we
have $t_\ep=1+o(1)$ as $\ep\to 0$. On the other hand, we infer from
the definition of $c_\infty$ that
\begin{equation}
\begin{split}
&\frac {t_\ep^2}2\intr |\nabla u_\ep|^2\,dx-\frac
{t_\ep^{2^*}}{2^*}\intr
u_\ep^{2^*}\,dx-\frac{t_\ep^{2_*}}{2_*}\intrr
u_\ep(x',0)^{2_*}\,dx'\\&\leq \frac {1}2\intr |\nabla
u_\ep|^2\,dx-\frac {1}{2^*}\intr u_\ep^{2^*}\,dx-\frac{1}{2_*}\intrr
u_\ep(x',0)^{2_*}\,dx'\\&=c_\infty.
\end{split}
\end{equation}
Hence, to show $I(t_\ep u_\ep)<c_\infty$, it is sufficient to show
that

\begin{equation}\label{5.11}
\begin{split}
&-\frac 12N^{\frac{N-2}2}(N-2)^{\frac {N+2}2}\intrr
\frac{[|x'|^2+(x_N^0)^2]g(x')}{[1+|x'|^2+(x_N^0)^2]^N}\,dx'\\&+\frac
12N^{\frac{N-2}2}(N-2)^{\frac {N+2}2}\intrr
\frac{g(x')}{[1+|x'|^2+(x_n^0)^2]^N}\,dx'\\&+N^{\frac{N}2}(N-2)^{\frac
{N}2}\intrr \frac{g(x')}{[1+|x'|^2+(x_n^0)^2]^N}\,dx'<0.
\end{split}
\end{equation}
We need to compare the value
$$
\intrr \frac{|x'|^2g(x')}{[1+|x'|^2+(x_n^0)^2]^N}\,dx'
$$
with
$$
\intrr \frac{g(x')}{[1+|x'|^2+(x_n^0)^2]^N}\,dx'.
$$
If we denote $c=1+(x_N^0)^2=\frac{2(N-1)}{N-2}$, then a direct
calculation shows that
\begin{equation}
\begin{split}
&\intrr \frac{|x'|^2g(x')}{[1+|x'|^2+(x_n^0)^2]^N}\,dx'\\&=\intrr
\frac{|x'|^2(\Sigma_{i=1}^{N-1}\alpha_ix_i^2)}{[c+|x'|^2]^N}\,dx'\\&=(\Sigma_{i=1}^{N-1}\alpha_i)\intrr
\frac{|x'|^2x_i^2}{[c+|x'|^2]^N}\,dx'\\&=\frac 12H(0)\intrr
\frac{|x'|^4}{[c+|x'|^2]^N}\,dx'\\&=\frac
12H(0)\omega_{N-2}\int_0^{\infty}\frac{r^{N+2}}{[c+r^2]^N}\,dr.
\end{split}
\end{equation}
Similarly, we have
\begin{equation}
\begin{split}
&\intrr \frac{g(x')}{[1+|x'|^2+(x_n^0)^2]^N}\,dx'\\&=\intrr
\frac{(\Sigma_{i=1}^{N-1}\alpha_ix_i^2)}{[c+|x'|^2]^N}\,dx'\\&=(\Sigma_{i=1}^{N-1}\alpha_i)\intrr
\frac{x_i^2}{[c+|x'|^2]^N}\,dx'\\&=\frac 12H(0)\intrr
\frac{|x'|^2}{[c+|x'|^2]^N}\,dx'\\&=\frac
12H(0)\omega_{N-2}\int_0^{\infty}\frac{r^{N}}{[c+r^2]^N}\,dr.
\end{split}
\end{equation}
On the other hand, we have
\begin{equation*}
\begin{split}
\int_0^\infty \frac{r^{N+2}}{[c+r^2]^N}\,dr&=\int_0^\infty
\frac{(c+r^2)r^N}{[c+r^2]^N}\,dr-c \int_0^\infty
\frac{r^N}{[c+r^2]^N}\,dr\\&=\frac{2(N-1)}{N+1}\int_0^\infty
\frac{r^{N+2}}{[c+r^2]^N}\,dr-\frac{2(N-1)}{N-2}\int_0^\infty
\frac{r^N}{[c+r^2]^N}\,dr,
\end{split}
\end{equation*}
which implies
$$
\int_0^\infty
\frac{r^{N+2}}{[c+r^2]^N}\,dr=\frac{2(N-1)(N+1)}{(N-2)(N-3)}\int_0^\infty\frac{r^{N}}{[c+r^2]^N}\,dr.
$$
Inserting this equation into the left hand side of equation
\eqref{5.11}, then we get
\begin{equation}\label{3.5}
\begin{split}
&-\frac 12N^{\frac{N-2}2}(N-2)^{\frac {N+2}2}\intrr
\frac{[|x'|^2+(x_N^0)^2]g(x')}{[1+|x'|^2+(x_N^0)^2]^N}\,dx'\\&+\frac
12N^{\frac{N-2}2}(N-2)^{\frac {N+2}2}\intrr
\frac{g(x')}{[1+|x'|^2+(x_n^0)^2]^N}\,dx'\\&+N^{\frac{N}2}(N-2)^{\frac
{N}2}\intrr \frac{g(x')}{[1+|x'|^2+(x_n^0)^2]^N}\,dx'\\&=\frac
{\omega_{N-2}}2N^{\frac{N-2}2}(N-2)
^{\frac{N}2}H(0)[-\frac{(N-1)(N+1)}{N-3}-\frac N2+\frac{N-2}2+N
]\int_0^\infty \frac{r^N}{(c+r^2)^{N}}\,dr\\&=-2N^{\frac{N-2}2}(N-2)
^{\frac{N}2}\frac{N-1}{N-3}H(0)\omega_{N-2}\int_0^\infty
\frac{r^N}{(c+r^2)^{N}}\,dr\\&<0
\end{split}
\end{equation}
since $H(0)>0$. This proves equation \eqref{5.11}. Hence we deduce
that
$$
I(t_\ep u_\ep)<0
$$
for $\ep$ small enough.
\end{proof}

Finally, we study the case $N=3$. In this case, we suppose the
principal curvatures of $\partial \Omega$ at $x_0 \in\partial
\Omega$ belong to interval $(2a,2A)$ for some $0<a\leq A<\infty$.
Then we have $a|x'|^2\leq h(x')\leq A |x'|^2$ for $x'\in
D(0,\delta)$. With these notations, we have the following estimates.
\begin{lemma}\label{t 5.4}
Let $N=3$, $\Omega$ as above and $u_\ep$ be defined by equation
\eqref{5.2}, then we have \\
(i) $\int_\Omega|\nabla u_\ep|^2\,dx\leq \intr |\nabla
u_\ep|^2\,dx-C\ep |\ln \ep|+O(\ep)$.\\
(ii) $\int_\Omega u_\ep^6\,dx\geq \intr u_\ep^6\,dx-C\ep$.\\
(iii) $\intpo u_\ep^4\,d\sigma=\intrr u_\ep^4\,dx'-C\ep$.\\
(iv) $\into u_\ep^2=O(\ep)$.

\end{lemma}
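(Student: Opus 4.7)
The plan is to prove Lemma \ref{t 5.4} by adapting the strategy of Lemma \ref{t 3.3} and Lemma \ref{t 4.3} to the double-critical bubble $u_\ep$ defined in \eqref{5.2} specialized to $N=3$. The crucial features of the $N=3$ case are that the ``depth'' of the bubble has only first-power decay (exponent $(N-2)/2=1/2$), so cap integrals over the region between $\BR$ and $\partial \Omega$ become logarithmically divergent rather than finite. Throughout I would use, as in the earlier sections, the parametrization $x_3=h(x')$ of $\partial\Omega$ near $x_0=0$, the crude comparison $a|x'|^2\leq h(x')\leq A|x'|^2$ on $D(0,\delta)$, and a standard truncation to reduce all computations to $|x|\leq\delta$ with an $O(\ep)$ error from the far region.

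First I would prove (i). Writing $\intr |\nabla u_\ep|^2\,dx - \into |\nabla u_\ep|^2\,dx = \int_{D(0,\delta)}dx'\int_0^{h(x')}|\nabla u_\ep|^2\,dx_3 + O(\ep)$, I would use $h(x')\geq a|x'|^2$ to bound this cap integral from below. A direct computation for $N=3$ gives $|\nabla u_\ep|^2 \geq C\ep\,\bigl(|x'|^2+(x_3+\ep x_3^0)^2\bigr)\big/\bigl[\ep^2+|x'|^2+(x_3+\ep x_3^0)^2\bigr]^3$; on the cap $0\leq x_3\leq a|x'|^2$ this is essentially $C\ep/\bigl[\ep^2+|x'|^2\bigr]^2$, so after performing the $x_3$ integration and switching to polar coordinates in $x'$, the remaining integral $\int_0^\delta r^3/(\ep^2+r^2)^2\,dr$ splits into a contribution $O(1)$ from $r\leq\ep$ and a contribution $|\ln\ep|+O(1)$ from $\ep\leq r\leq\delta$, giving the desired $C\ep|\ln\ep|+O(\ep)$ lower bound on the cap integral and hence (i).

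Next I would prove (ii) by the symmetric upper bound: using $h(x')\leq A|x'|^2$, the cap integral $\int_{D(0,\delta)}dx'\int_0^{h(x')}u_\ep^6\,dx_3$ is at most $C\ep^3\int_{D(0,\delta)}|x'|^2/\bigl[\ep^2+|x'|^2\bigr]^3\,dx'=O(\ep)$ (after the scaling $x'=\ep y'$, the resulting integral $\int r^3/(1+r^2)^3\,dr$ is finite for $N=3$), which yields (ii). For (iii), the calculation is exactly the one already performed in the proof of Lemma \ref{t 5.2}(iii) restricted to $N=3$: parametrize $\partial\Omega\cap B_\delta$ by $(x',h(x'))$, set $f(r)=u_\ep(x',rh(x'))^{4}\sqrt{1+r^2|\nabla h(x')|^2}$, and apply the mean value theorem on $[0,1]$; the resulting integrand scales as $\ep\cdot g(y')/[1+|y'|^2+(x_3^0)^2]^N$ after $x'=\ep y'$, producing a correction of order $\ep$ (which may be sharpened to $=C\ep+o(\ep)$ exactly as in the $N\geq 4$ case). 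For (iv), I would use the pointwise bound $u_\ep^2\leq C\ep/[\ep^2+|x|^2]$ and the computation in \cite{BN}, which gives $\into u_\ep^2\,dx\leq C\ep\int_0^R r^2/(\ep^2+r^2)\,dr=O(\ep)$.

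The main obstacle I anticipate is (i): one must verify carefully that the pointwise lower bound on $|\nabla u_\ep|^2$ is of the claimed order \emph{uniformly} on the cap, because the argument of the bubble involves $(x_3+\ep x_3^0)^2$ rather than just $x_3^2$, so the partial derivatives do not reduce to the Aubin--Talenti form used in Section~3. Once this uniform lower bound is in hand, the logarithmic divergence is produced entirely by the transverse annulus $\ep\leq|x'|\leq\delta$ in $x'$, and the remaining steps are routine.
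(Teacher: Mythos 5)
Your proposal follows the same outline as the paper's proof: decompose each volume (or surface) integral into the half-space integral minus a cap integral over the region between $\BR$ and $\partial\Omega$, use the curvature comparison $a|x'|^2\leq h(x')\leq A|x'|^2$, and reduce to one-dimensional radial integrals whose $|\ln\ep|$ or $O(\ep)$ behavior is read off by splitting at $r=\ep$; for (iii) you use the same mean-value-theorem trick as the paper's Lemma \ref{t 5.2}(iii), and for (iv) the same pointwise bound as the paper's Lemma \ref{t 4.3}(iii). The one caveat you raise about the shifted bubble in (i) is not a real obstacle: since $x_3\geq 0$ on the cap, $(x_3+\ep x_3^0)^2\leq C(|x'|^4+\ep^2)\lesssim \ep^2+|x'|^2$ for $|x'|\leq\delta$ small, so the denominator stays comparable to $[\ep^2+|x'|^2]^3$ uniformly on the cap and the rest of your computation goes through exactly as in the paper.
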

\begin{proof}
As for (i), we have
\begin{equation}
\begin{split}
\into |\nabla u_\ep|^2\,dx&=\intr |\nabla
u_\ep|^2\,dx-\int_{D(0,\delta)}dx'\int_0^{h(x')}|\nabla
u_\ep|^2\,dx_N+O(\ep^{N-2})\\&\leq \intr |\nabla
u_\ep|^2\,dx-\int_{D(0,\delta)}dx'\int_0^{a|x'|^2}|\nabla
u_\ep|^2\,dx_N+O(\ep).
\end{split}
\end{equation}
For the second term on the right hand of the above equation, we have
\begin{equation}
\begin{split}
&\int_{D(0,\delta)}dx'\int_0^{a|x'|^2}|\nabla u_\ep|^2\,dx_N\\&\geq
N^{\frac 12}(N-2)^{\frac 52}\ep
\int_{D(0,\delta)}dx'\int_0^{a|x'|^2}\frac{|x'|^2+(x_N+\ep
x_N^0)^2}{[\ep^2+|x'|^2+(x_N+\ep x_N^0)^2]^3}dx_N\\&=C
\int_{D(0,\ep^{-1}\delta)}dx'\int_0^{a\ep |x'|^2} \frac{|x'|^2+(x_N+
x_N^0)^2}{[1+|x'|^2+(x_N+ x_N^0)^2]^3}dx_N\\&\geq C
\int_{D(0,\ep^{-1}\delta)} \frac{|x'|^2+(x_N^0)^2}{[1+|x'|^2+(x_N+
x_N^0)^2]^3}\ep|x'|^2\,dx'
\\&=C\ep\int_0^{\ep^{-1}\delta}\frac{r^2+(x_N^0)^2}{[1+r^2+(x_N^0)^2]^3}r^3dr\\&=
C\ep[\int_0^{\sqrt{1+(x_N^0)^2}}\frac{r^5}{[1+r^2+(x_N^0)^2]^3}dr
+\int_{\sqrt{1+(x_N^0)^2}}^{\ep^{-1}\delta}\frac{r^5}{[1+r^2+(x_N^0)^2]^3}dr\\&+\int_0^{\ep^{-1}\delta}
\frac{(x_N^0)^2r^3}{[1+r^2+(x_N^0)^2]^3}dr ]\\&\geq
C\ep[|\ln\ep|+C].
\end{split}
\end{equation}
Hence, (i) follows.

For (ii), we first note that
\begin{equation}
\into u_\ep^6=\intrr
u_\ep^6\,dx-\int_{D(0,\delta)}dx'\int_0^{h(x')}u_\ep^6\,dx_N+O(\ep^{3}).
\end{equation}
While a direct calculation shows that
\begin{equation}
\begin{split}
&\int_{D(0,\delta)}dx'\int_0^{h(x')}h_\ep^6\,dx_N\\&\leq
C\int_{D(0,\delta)}\int_0^{A|x'|^2}\frac{\ep^3}{[\ep^2+|x'|^2+(x_N+\ep
x_N^0)^2]^3}dx_N\\&=C\int_{D(0,\ep^{-1}\delta)}dx'\int_0^{\ep
A|x'|^2}\frac 1{[1+|x'|^2+(x_N+ x_N^0)^2]^3}\,dx_N\\&\leq C
\int_{D(0,\ep^{-1}\delta)}\frac{A\ep|x'|^2}{[c+|x'|^2]^3}\,dx'\\&=C\ep\int_0^{\ep^{-1}\delta}\frac{r^3}{[c+r^2]^3}\,dr\\&=C\ep,
\end{split}
\end{equation}
hence (ii) follows.

For (iii), if we define
$$
f(r)=u_\ep(x',rg(x'))^4\sqrt{1+r^2|\nabla g(x')|^2},
$$
then we have
\begin{equation}
\begin{split}
&\intpo u_\ep^4-\intrr u_\ep(x',0)^4\,dx'\\&=\intrr
f(1)-f(0)\,dx'+O(\ep^2)\\&=C\intrr u_\ep(x',r_\ep
g(x'))^3\frac{\partial u_\ep}{\partial x_N}(x',r_\ep
g(x'))g(x')\sqrt {1+r_\ep^2|\nabla g(x')|^2}\,dx'+o(\ep)\\&=-C\intrr
\frac{\ep^2}{[\ep^2+|x'|^2+(r_\ep g(x')+\ep x_N^0)^2]^3}(r_\ep
g(x')+\ep x_N^0)g(x')\sqrt {1+r_\ep^2|\nabla
g(x')|^2}\,dx'+o(\ep)\\&= -C\ep\intrr
\frac{g(x')}{[1+|x'|^2+(x_N^0)^2]^3}dx'+o(\ep)\\&=C\ep+o(\ep),
\end{split}
\end{equation}
hence (iii) follows.

(iv) is proved in \cite{BN}.
\end{proof}

With the above preparations, we can estimate the Mountain Pass level
of $I$ for $N=3$ now. More precisely, we have the following result.
\begin{lemma}\label{t 5.5}
Suppose that $N=3$, then we have
$$\sup_{t\geq 0}I(tu_\ep)<c_\infty$$
for $\ep$ small enough.
\end{lemma}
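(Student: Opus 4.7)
The plan is to mimic the strategy of Lemma \ref{t 5.3} (the case $N\geq 4$), but to replace the delicate mean-curvature cancellation argument by the much coarser fact that, in dimension three, the dominant negative contribution already comes from the gradient term, which is of order $\ep|\ln\ep|$, whereas all other correction terms are only of order $\ep$.

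First, for any $t\geq 0$ I will write
$$
I(tu_\ep)=\frac{t^2}{2}\into|\nabla u_\ep|^2\,dx+\frac{t^2}{2}\into u_\ep^2\,dx-\frac{t^{6}}{6}\into u_\ep^{6}\,dx-\frac{t^{4}}{4}\intpo u_\ep^{4}\,dS,
$$
and substitute the four estimates in Lemma \ref{t 5.4}. Collecting the contributions coming from $\R$ and $\BR$ separately from the remainders, this gives
$$
I(tu_\ep)\;\leq\;\Bigl[\frac{t^2}{2}\intr|\nabla u_\ep|^2\,dx-\frac{t^{6}}{6}\intr u_\ep^{6}\,dx-\frac{t^{4}}{4}\intrr u_\ep(x',0)^{4}\,dx'\Bigr]-\frac{t^2}{2}C\ep|\ln\ep|+O(\ep),
$$
where I have used that (i) contributes $-C\ep|\ln\ep|$ with the right sign, (ii) and (iii) contribute $+C\ep$ (favorable sign on the $\frac{t^6}{6}$ and $\frac{t^4}{4}$ terms), and (iv) absorbs $\tfrac{t^2}{2}\into u_\ep^2\,dx=O(\ep)$.

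Next, the bracketed expression is exactly $I_\infty(tu_\ep)$ evaluated for the minimizer $u_\ep$ of \eqref{5.1}, so by the definition of $c_\infty$ it is bounded above by $c_\infty$ for every $t\geq 0$. In particular, at the point $t_\ep$ where $I(tu_\ep)$ attains its maximum one has $t_\ep\to 1$ as $\ep\to 0$ (this is established exactly as in the earlier sections, by differentiating in $t$ and using the Lemma \ref{t 5.4} expansions to see that $t_\ep$ solves an equation approaching $t^{4}\intrr u_1^{4}+t^{2}\intr u_1^{6}=\intr|\nabla u_1|^2$). Hence $\frac{t_\ep^2}{2}\geq\frac{1}{4}$ for $\ep$ small, and
$$
\sup_{t\geq 0}I(tu_\ep)\;=\;I(t_\ep u_\ep)\;\leq\;c_\infty-\tfrac{C}{4}\ep|\ln\ep|+O(\ep)\;<\;c_\infty
$$
for all $\ep$ sufficiently small, since $\ep|\ln\ep|$ dominates $\ep$ as $\ep\to 0^+$.

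The only point requiring attention is the uniformity of $t_\ep\to 1$: one has to make sure that the expansions in Lemma \ref{t 5.4} are strong enough to conclude that $t_\ep$ is bounded away from $0$ and from $\infty$, so that the term $-\tfrac{t_\ep^2}{2}C\ep|\ln\ep|$ really stays of the correct order. This is handled by the same Ambrosetti--Rabinowitz type argument as for $N\geq 4$: $I(tu_\ep)\to-\infty$ as $t\to\infty$ and $I(0)=0$, with quantitative bounds independent of $\ep$ on compact intervals of $t$. Once this is in place, the conclusion follows immediately and there is no critical cancellation to verify (in contrast to the $N\geq 4$ argument of Lemma \ref{t 5.3}).
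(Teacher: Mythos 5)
Your proof is correct and takes essentially the same approach as the paper: substitute the four estimates of Lemma \ref{t 5.4} into $I(tu_\ep)$, observe that the part living on $\R$ and $\BR$ is bounded by $c_\infty$ (it is $I_\infty(tu_\ep)$, whose supremum over $t$ is attained at $t=1$), note $t_\ep\to 1$, and conclude because the $-\tfrac{t^2}{2}C\ep|\ln\ep|$ term from the gradient estimate dominates all the $O(\ep)$ corrections. The paper writes the same decomposition in equation \eqref{5.20} and invokes $t_\ep=1+o(1)$, so the two arguments coincide.
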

\begin{proof}
We infer from Lemma \ref{t 5.4} that
\begin{equation}\label{5.20}
\begin{split}
I(t u_\ep)&=\frac {t^2}2\intr |\nabla u_\ep|^2\,dx-\frac
{t^{2^*}}{2^*}\intr u_\ep^{2^*}\,dx-\frac{t^{2_*}}{2_*}\intrr
u_\ep^{2_*}\,dx'\\&-\frac {t^2}2C\ep|\ln
\ep|+\frac{t^{2^*}}{2^*}C\ep+\frac{t^{2_*}}{2_*}C\ep+O(\ep).
\end{split}
\end{equation}
Suppose that $I(tu_\ep)$ attains its maximum at $t_\ep$, then it is
easy to see that $t_\ep=1+o(1)$ as $\ep\to 0$. Moreover, we have
\begin{equation}
\begin{split}
&\frac {t_\ep^2}2\intr |\nabla u_\ep|^2\,dx-\frac
{t_\ep^{2^*}}{2^*}\intr
u_\ep^{2^*}\,dx-\frac{t_\ep^{2_*}}{2_*}\intrr
u_\ep(x',0)^{2_*}\,dx'\\&\leq \frac {1}2\intr |\nabla
u_\ep|^2\,dx-\frac {1}{2^*}\intr u_\ep^{2^*}\,dx-\frac{1}{2_*}\intrr
u_\ep(x',0)^{2_*}\,dx'\\&=c_\infty.
\end{split}
\end{equation}
Insert this equation into equation \eqref{5.20}, then we get
$$
I(t_\ep u_\ep)<c_\infty
$$
for $\ep$ small enough. So the conclusion of this lemma follows.

\end{proof}

\begin{proof}[Proof of Theorem \ref{t 1.4}]
The proof of Theorem \ref{t 1.4} is a direct consequence of the
Mountain Pass theorem. In fact, it is easy to see that the
functional $I$ possesses the Mountain Pass structure, so the
Mountain Pass value $c$ is well-defined. Moreover, it follows from
Lemma \ref{t 5.1} that $I$ satisfies the $(PS)_c$ condition for
$c<c_\infty$. Finally, we infer from Lemma \ref{t 5.3} and Lemma
\ref{t 5.5} that $c<c_\infty$. So $c$ is a nontrivial critical value
for functional $I$, that is, problem \eqref{1.1} possesses a
nontrivial solution under the assumptions of Theorem \ref{t 1.4}.

\end{proof}

\bigskip
{\bf Acknowledgement}: This work is supported by NSFC, No.11101291.
\bigskip


\begin{thebibliography}{99}



\bibitem{BCP}
B.Barrios, E.Colorado, A.de Pablo, U.S\'{a}nchez, \emph{On some
critical problems for the fractional Laplacian oprator}, J.
Differential Equations, \textbf{252}(2012), 6133-6162.

\bibitem{BR}
J.Bonder, J.Rossi, \emph{Existence results for the p-Laplacian with
nonlinear boundary conditions}, J. Math. Anal. Appl.,
\emph{263}(2001), 195-223.


\bibitem{BCPa}
C.Br\~{a}ndle, E.Colorado, A.de Pablo,  U. S\'{a}nchez, \emph{A
concave-convex elliptic problem involving the  fractional
Laplacian}, Proc. Roy. Soc. Edinburgh Sect. A., \textbf{143}(2013),
39-71.

\bibitem{BL}
H.Brezis, E.Lieb, \emph{A relation between pointwise convergence of
functions and convergence of functionals}, Proc. Amer. Math. Soc.,
\textbf{88}(1983), 486-490.

\bibitem{BN}
H.Brezis, L.Nirenberg, \emph{Positive solutions of nonlinear
elliptic equations involving critical Sobolev exponents}, Comm.
Pure. Appl. Math., \textbf{3}(1989), 437-477.


\bibitem{CT}
X.Cabre and J.Tan, \emph{Positive solutions of nonlinear problems
involving the square root of the Laplacian}, Advances in
Mathematics, \textbf{224}(2010), 2052-2093.

\bibitem{CS}
L.Caffarelli, L. Silvestre, \emph{An extension problem related to
the fractional Laplacian}, Comm. Partial Diff. Euqas., \textbf{32}
(2007), 1245-1260.

\bibitem{CFP}
A.Capozzi, D.Fortunato and G. Palmieri , \emph{An existence result
for nonlinear elliptic problems involving critical Sobolev
exponent}, Ann. I. H. Poincare,\textbf{6}(1985), 463-470.

\bibitem{CCFS}
M.Chipot, M.Chlebik, M.Fila, I.Shafrir, \emph{Existence of positive
solutions of a semilinear elliptic equation in $R^n_+$ with a
nonlinear boundary condition}, J. Math. Anal. Appl., \textbf{223}
(1998), 429-471.


\bibitem{CSF}
M.Chipot, I.Shafrir, M.Fila, \emph{On the solutions to some elliptic
equations with nonlinear Neumann boundary conditions}, Adv.
Differential Equations, \textbf{1}(1996), 91-110.


\bibitem{GWZ}
Y.Ge, J.Wei, F.Zhou, \emph{A critical elliptic problem for
polyharmonic operators}, J. Funct. Anal., \textbf{260}(2011),

\bibitem{Hu}
B.Hu, \emph{Nonexistence of a positive solution of the Laplace
equation with a nonlinear boundary condition}, Differential Integral
Equations, \textbf{7}(1994), 301-313.

\bibitem{HY1}
B.Hu, H.Yin, \emph{On critical exponents for the heat equation with
a mixed nonlinear Dirichlet-Neumann boundary condition}, J. Math.
Anal. Appl., \textbf{209}(1997), 683-711.

\bibitem{HYu}
Y.Hua, X.Yu, \emph{On the ground state solution for a critical
fractional Laplacian equation}, Nonlinear Anal., \textbf{87}(2013),
116-125.

\bibitem{HY2}
B.Hu, H.Yin, \emph{On critical exponents for the heat equation with
a nonlinear boundary condition}, Ann. Inst. H. Poincare Anal. Non
Lineaire, \textbf{13}(1996), 707-732.

\bibitem{HY3}
B.Hu, H.Yin, \emph{The profile near blowup time for solution of the
heat equation with a nonlinear boundary condition}, Trans. Amer.
Math. Soc., \textbf{346}(1994), 117-135.

\bibitem{HYu}
Y.Hua, X.Yu, \emph{On the ground state solution for a critical
fractional Laplacian equation}, Nonlinear Anal., \textbf{87}(2013),
116-125.

\bibitem{LZ}
Y.Li, L.Zhang, \emph{Liouville-type theorems and harnack-type
inequalities for semilinear elliptic equations},Journal d'Analyse
Mathmatique, \textbf{90}(2003), 27-87.

\bibitem{LZu}
Y.Li, M.Zhu, \emph{Uniqueness theorems through the method of moving
spheres}, Duke Math. J., \textbf{80}(1995), 383-417.


\bibitem{Ra}
P.H.Rabinowitz, \emph{Minimax Methods in Critical Point Theory with
Applications to Differential Equations}, CBMS Regional Conference,
65, AMS, Providence, R.I., 1986.

\bibitem{T}
J.Tan,  \emph{The Brezis-Nirenberg type problem involving the square
root of the Laplacian}, Calc. Var. Partial Differential Equations,
\textbf{42}(2011), 21-41.

\bibitem{W}
X.Wang, \emph{Neumann problems of semilinear elliptic equations
involving critical Sobolev exponents}, J. Differential Equations,
\textbf{93}(1991), 283-310.

\bibitem{YY}
Y.Ye, X.Yu, \emph{A global compactness result for a critical
semilinear elliptic equation in $R^N$}, Nonlinear Anal., \textbf{71}
(2009), 1844-1849.


\bibitem{Yu6}
X.Yu, \emph{Liouville Type Theorem for Nonlinear Elliptic Equation
with General Nonlinearity}, Discrete and Continuous Dynamical
Systems-Series A, \textbf{34}(2014), 4947¨C4966.




















































\end{thebibliography}
\end{document}